\documentclass[11pt]{amsart}

\usepackage{geometry}
\geometry{a4paper,top=4cm,bottom=4cm,left=3.2cm,right=3.2cm}
\setcounter{tocdepth}{1}
\usepackage{amsfonts, amssymb, amscd}
\numberwithin{equation}{section}

\usepackage[symbol]{footmisc}

\usepackage{bm}

\usepackage{verbatim}
\usepackage{amssymb}
\usepackage{mathrsfs}
\usepackage{graphicx}
\usepackage[nospace,noadjust]{cite}
\usepackage[all]{xy}
\usepackage{tikz-cd}
\usepackage{subcaption}
\usepackage{subfiles}
\usepackage[toc,page]{appendix}
\usepackage{comment}
\usepackage{enumerate}
\usepackage{enumitem}

\usepackage{graphicx}
\graphicspath{{images/}}

\usepackage{appendix}
\usepackage{hyperref}
\usepackage{romannum}
\AtBeginDocument{\pagenumbering{arabic}}

\newcommand{\dashto}{\dashrightarrow}

\newcommand{\rddown}[1]{\left\lfloor{#1}\right\rfloor} 

\newcommand{\Oo}{\mathcal{O}}

\newcommand{\Supp}{\operatorname{Supp}}
\newcommand{\mult}{\operatorname{mult}}
\newcommand{\Center}{\operatorname{Center}}

\newcommand{\Sing}{\operatorname{Sing}}

\newcommand{\Cc}{\mathbb{C}}

\newcommand{\Pp}{\mathbb{P}}
\newcommand{\Qq}{\mathbb{Q}}

\newcommand{\Rr}{\mathbb{R}}
\newcommand{\F}{\mathbb{F}}

\newcommand{\fR}{\mathfrak{R}}

\newcommand{\Zz}{\mathbb{Z}}

\newcommand{\cN}{\mathcal{N}}

\newcommand{\cZ}{\mathcal{Z}}

\newcommand{\bM}{\mathbf{M}}

\newcommand{\WDiv}{\operatorname{WDiv}}

\newcommand{\lf}{\lfloor}
\newcommand{\rf}{\rfloor}

\newcommand{\Ii}{{\Gamma}}

 \usepackage{todonotes}




\newtheorem{thm}{Theorem}[section]
\newtheorem{conj}[thm]{Conjecture}

\newtheorem{lem}[thm]{Lemma}

\newtheorem{prop}[thm]{Proposition}

\newtheorem{claim}[thm]{Claim}
\theoremstyle{definition}
\newtheorem{defn}[thm]{Definition}
\newtheorem{rem}[thm]{Remark}

\newtheorem{ex}[thm]{Example}

\theoremstyle{definition}

\hyphenpenalty=5000
\tolerance=1000

\usepackage{todonotes}

\begin{document}

\title{Boundedness of complements for log Calabi-Yau threefolds}

\author{Guodu Chen}

\address{Institute for Theoretical Sciences, Westlake University, Hangzhou, Zhejiang, 310024, China}
\email{chenguodu@westlake.edu.cn}

\author{Jingjun Han}
\address{Shanghai Center for Mathematical Sciences, Fudan University, Shanghai 200438, China}
\email{hanjingjun@fudan.edu.cn}

\author{Qingyuan Xue}
\address{Department of Mathematics, The University of Utah, Salt Lake City, UT 84112, USA}
\email{xue@math.utah.edu}

\begin{abstract}
In this paper, we study the theory of complements, introduced by Shokurov, for Calabi-Yau type varieties with the coefficient set $[0,1]$. We show that there exists a finite set of positive integers $\mathcal{N}$, such that if a threefold pair $(X/Z\ni z,B)$ has an $\mathbb{R}$-complement which is klt over a neighborhood of $z$, then it has an $n$-complement for some $n\in\mathcal{N}$. We also show the boundedness of complements for $\mathbb{R}$-complementary surface pairs.
\end{abstract}

\date{\today}
\subjclass[2010]{14E30, 14J45, 14J17}
\thanks{
\emph{Keywords}: complements, log Calabi-Yau pairs, Fano varieties.
}

\maketitle
\pagestyle{myheadings}\markboth{\hfill G. Chen, J. Han, and Q. Xue \hfill}{\hfill Boundedness of complements for log Calabi-Yau threefolds\hfill}

\tableofcontents

\section{Introduction}
We work over the field of complex numbers $\Cc$.

The theory of complements (for Fano varieties) was introduced by Shokurov when he proved the existence of flips for threefolds \cite{Sho92}. It originates from his earlier work on anti-canonical systems on Fano threefolds \cite{Sho79}. The boundedness of complements \cite{Bir19,HLS19,Sho20} played an important role in various contexts in the study of Fano varieties, including the solution of the Borisov-Alexeev-Borisov conjecture (boundedness of Fano varieties) \cite{Bir19,Bir21} and the Yau-Tian-Donaldson conjecture (the existence of K{\"a}hler-Einstein metrics on log Fano pairs) \cite{Xu20,BLX19,LXZ21}. We refer the reader to \cite{Liu18,FMX19,HLS19,Chen20,CGN21,CH21,CZ21,CX22a,HLL22} and references therein for more recent progress and applications.

According to the minimal model program, varieties of general type, Fano varieties and Calabi-Yau varieties form three fundamental classes in birational geometry and are building blocks of algebraic varieties. In this paper, we study the theory of complements for Calabi-Yau type varieties with the coefficient set $[0,1]$ in dimensions 2 and 3. Note that Calabi-Yau type varieties form a large class of varieties which includes both Fano varieties and Calabi-Yau varieties. For Calabi-Yau varieties, since the boundedness of complements implies the boundedness of the non-vanishing index of $K_X$, we expect that the theory of complements will play an important role in the study of Calabi-Yau varieties, including the boundedness of Calabi-Yau varieties. We also remark that replacing a coefficient set which satisfies the descending chain condition (DCC) with the set $[0,1]$ is considered as a very hard problem in the theory of complements.

\medskip

Our first main result is the boundedness of complements for threefold pairs.

\begin{thm}\label{thm:3foldsbddcmpt}
Let $l$ be a positive integer. Then there exists a finite set of positive integers $\cN$ depending only on $l$ satisfying the following. 

Assume that $(X/Z\ni z,B)$ is a threefold pair which has an $\Rr$-complement that is klt over a neighborhood of $z$. Then $(X/Z\ni z,B)$ has an $n$-complement for some $n\in\cN$ such that $l\mid n$. 
\end{thm}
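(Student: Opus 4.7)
The plan is to reduce to pairs with coefficients in a fixed finite rational set, for which boundedness of complements for threefolds is already available, and then assemble the resulting $n$-complements via a convexity argument in the coefficient space. First, I would pass to a $\Qq$-factorial dlt modification of the given $\Rr$-complement $(X,B^+)$ and shrink $Z$ around $z$. After this reduction $(X,B^+)$ is $\Qq$-factorial dlt and klt over a neighborhood of $z$, and $B=\sum b_iB_i$ with $b_i\in[0,1]$; the set of $n$-complements is well-behaved under such crepant modifications, so it suffices to bound complements on the modification.

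The crucial ingredient is the theory of \emph{uniform rational polytopes} (of Han--Liu--Shokurov type). The condition of admitting a klt $\Rr$-complement over $z$ is open in the coefficient simplex spanned by the components of $B$. Using this openness together with ACC-type results for log canonical thresholds and log discrepancies, I would construct a rational polytope $\PP\subset[0,1]^N$ whose dimension and whose vertex denominators depend only on $l$ (and on the dimension $3$), such that $\bb=(b_i)\in\PP$ and for every vertex $\vv$ of $\PP$ the pair $(X/Z\ni z,\sum v_iB_i)$ still admits a klt $\Rr$-complement, with the coefficients of $\vv$ lying in a fixed finite set $\Gamma\subset\tfrac{1}{n_0l}\Zz\cap[0,1]$.

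At each vertex $\vv$ of $\PP$ I would apply the known boundedness of complements for threefold pairs with coefficients in the finite (hence DCC) set $\Gamma$ to obtain an $n_\vv$-complement of $(X/Z\ni z,\sum v_iB_i)$, with $l\mid n_\vv$ and $n_\vv$ lying in a fixed finite set. Writing $\bb=\sum\lambda_\vv\vv$ as a convex combination of the vertices, a linearity-of-complements argument (of Shokurov--Birkar style: one checks that an $\Rr$-linear combination of $n_\vv$-complements of $(X/Z\ni z,\sum v_iB_i)$ produces, via the integral round-up of a suitable global section, an $n$-complement of $(X/Z\ni z,B)$) glues these vertex complements into an $n$-complement of $(X/Z\ni z,B)$ with $n=\lcm\{n_\vv\}$. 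Taking $\cN$ to be the finite collection of all such lcm's that can arise yields the theorem.

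The main obstacle is the second step: constructing a uniform rational polytope when the coefficient set is the full interval $[0,1]$ (rather than a DCC set) is, as noted in the introduction, ``a very hard problem.'' One must produce the polytope uniformly in the pair, using only the existence of a klt $\Rr$-complement and the dimension; the denominators and the diameter of $\PP$ must not depend on the particular pair $(X/Z\ni z,B)$. A secondary difficulty arises in the relative setting with $\dim Z>0$: one typically has to use the canonical bundle formula to descend to a lower-dimensional problem (where the corresponding surface statement of the paper applies) and then lift the resulting complement back to $X$ via Kawamata--Viehweg-type vanishing, a step that requires careful control of the moduli and discriminant parts of the b-divisor.
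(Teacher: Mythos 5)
Your proposal takes a genuinely different route, but it has gaps that I do not think can be filled. The central one is the uniform rational polytope step, which you flag as the main obstacle but which is in fact fatal rather than merely hard: HLS-type polytope theorems control the denominators of the vertices, but the \emph{dimension} of the polytope is the number of distinct irrational coefficients of $B$, and for a pair with $B\in[0,1]$ there is no bound whatsoever on the number of components of $B$ or on their coefficients. So $\cN$ would depend on the pair through the number of vertices of $\PP$, which is exactly what the theorem forbids. A second gap is circularity: at each vertex $\vv$ you invoke ``the known boundedness of complements for threefold pairs with coefficients in the finite (hence DCC) set $\Gamma$,'' but no such result exists a priori for Calabi--Yau type threefolds. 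Theorem~\ref{thm:ftbdd} requires Fano type over $Z$, and Theorem~\ref{thm:lctype} (which covers the Calabi--Yau-type DCC case) is proved in Section~\ref{sec7} using Theorem~\ref{thm:3foldsbddcmpt}. The result you need at the vertices is essentially Theorem~\ref{thm:3foldsbddcmpt2}, i.e.\ the real content of the theorem. Third, the ``linearity of complements'' glue is not sound as stated: if $\bb=\sum\lambda_\vv\vv$ with real $\lambda_\vv$ and each $(X,\sum v_iB_i)$ has an $n_\vv$-complement $(X,C_\vv)$, the divisor $\sum\lambda_\vv C_\vv$ has irrational coefficients, so $n\bigl(K_X+\sum\lambda_\vv C_\vv\bigr)$ is not an integral divisor and cannot be linearly equivalent to $0$; the Shokurov--Birkar convexity argument is applied to find a complement of a fixed rational rounded boundary, not to average complements across vertices.

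The paper deals with $[0,1]$ coefficients entirely differently. Rather than a polytope in coefficient space, it replaces $B$ by its hyperstandard lower truncation $B_{\Phi_i}$ for a tower of hyperstandard sets $\Phi_i=\Gamma(\cup_{j\le i}\cN_j,\{0,1\})$, and the key arithmetic input is Lemma~\ref{lem:ncomplneq}: the inequality $\rddown{(n+1)\{b\}}/n+\rddown{b}\ge b_{n\_\Phi}$ ensures (via Lemma~\ref{lem:N_Phicompl}) that an $n$-complement of $(X,B_{n\_\Phi})$ already serves as an $n$-complement of $(X,B)$, so one never needs to control the actual coefficients of $B$. One then passes to a small $\Qq$-factorialization of a klt $\Rr$-complement and does a case analysis on $\kappa(X/Z,B-B_{\Phi_i})+\dim Z$: the Fano-type case is Theorem~\ref{thm:ftbdd}, the Calabi--Yau case is Lemma~\ref{lem:global}, and in the intermediate cases one runs an MMP on $-(K_X+B_{\Phi_{i+1}})$ to a contraction $\pi'\colon X'\to Z'$ with $\dim Z'\in\{1,2\}$, applies the canonical bundle formula (Propositions~\ref{prop:cbfindex} and~\ref{prop:bsa}, plus the matching-up Claim~\ref{claim:lift2}), invokes the curve/surface complement theorems on $Z'$, and lifts via Proposition~\ref{prop:cbflift1}. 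The canonical bundle formula step you call a ``secondary difficulty'' is actually the engine of the proof, not a side issue.
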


Theorem \ref{thm:3foldsbddcmpt} fails if we remove the assumption ``klt over a neighborhood of $z$''; see \cite[Example 11]{Sho20}. However, if we require the coefficients of the boundaries to lie in a set $\Gamma\subseteq[0,1]$ such that $\Gamma\cap\Qq$ is DCC, then we can remove the klt assumption. 
\begin{thm}\label{thm:lctype}
Let $l$ be a positive integer, and $\Gamma\subseteq[0,1]$ a set such that $\Gamma\cap\Qq$ is DCC. Then there exists a finite set of positive integers $\cN$ depending only on $l$ and $\Gamma$ satisfying the following. 

Assume that $(X/Z\ni z,B)$ is an $\Rr$-complementary threefold pair such that $X$ is of Calabi-Yau type over a neighborhood of $z$ and $B\in\Gamma$. Then $(X/Z\ni z,B)$ has an $n$-complement for some $n\in\cN$ such that $l\mid n$. 
\end{thm}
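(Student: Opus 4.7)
The plan is to bootstrap from Theorem \ref{thm:3foldsbddcmpt} by handling the possibly non-klt case of Theorem \ref{thm:lctype} via a dlt modification, adjunction to an lc center of coefficient one, and a lifting step that crucially exploits the Calabi--Yau type hypothesis. First I would reduce to the case where $B$ has coefficients in a fixed finite rational set: since $\Gamma\cap\Qq$ is DCC, a standard coefficient-descent argument in the theory of complements (in the spirit of Birkar and of Han--Liu--Shokurov) produces a finite rational subset $\Gamma_0\subseteq\Gamma\cap\Qq$ depending only on $\Gamma$ and $l$, together with a sub-boundary $B_0\le B$ with $\coeff(B_0)\subseteq\Gamma_0$, such that $(X/Z\ni z,B_0)$ is still $\Rr$-complementary and any $n$-complement of $(X/Z\ni z,B_0)$ is automatically an $n$-complement of $(X/Z\ni z,B)$. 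I may therefore assume $B\in\Gamma_0$.

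Next I would take a $\Qq$-factorial dlt modification $f\colon(Y,B_Y)\to(X,B)$ with $K_Y+B_Y=f^*(K_X+B)$ and distinguish two cases. If $\lfloor B_Y\rfloor=0$ over a neighborhood of $f^{-1}(z)$, then combining the $\Rr$-complement of $(Y,B_Y)$ with a small multiple of the pullback of a klt Calabi--Yau-type boundary on $X$ produces a klt $\Rr$-complement (the perturbation still dominates $B_Y$ because every coefficient of $B_Y$ is strictly less than one), so Theorem \ref{thm:3foldsbddcmpt} applied on $Y$ and pushed down by $f$ gives the result. Otherwise, pick a component $S\subseteq\lfloor B_Y\rfloor$ dominating $z$ and perform divisorial adjunction to obtain a surface pair $(S,B_S)$ with $K_S+B_S=(K_Y+B_Y)|_S$. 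By Shokurov's adjunction formula the coefficients of $B_S$ lie in a DCC rational set determined by $\Gamma_0$; hence the boundedness of complements for $\Rr$-complementary surface pairs (stated in the abstract and proven earlier in the paper) supplies an $n$-complement $B_S^+$ of $(S/Z\ni z,B_S)$ for some $n$ in a finite set depending only on $l$ and $\Gamma$, with $l\mid n$.

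The principal technical obstacle is to lift $B_S^+$ to an $n$-complement on $Y$ and then push it down to $X$. The plan is to apply the standard lifting short exact sequence
\[
0\to\cO_Y(D-S)\to\cO_Y(D)\to\cO_S(D|_S)\to 0,
\]
for an appropriate integral divisor $D$ on $Y$ whose restriction to $S$ realizes $B_S^+$, together with a Kawamata--Viehweg vanishing for $H^1(Y,D-S)$ in order to surject global sections of $\cO_Y(D)$ onto those of $\cO_S(D|_S)$. This is precisely where the Calabi--Yau type hypothesis cannot be dropped: the klt Calabi--Yau boundary $\Delta$ on $X$ with $K_X+\Delta\sim_{\Rr,Z}0$ lets one absorb a small multiple of $f^*\Delta$ into the relevant $\Qq$-divisor class, making it nef and big over $Z$ and granting the needed vanishing. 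Care will be required to match the coefficients arising from the different on $S$ with the rounding in $D$, but the push-forward $f_*B_Y^+$ should then yield the desired $n$-complement of $(X/Z\ni z,B)$ with $n$ in a finite set depending only on $l$ and $\Gamma$.
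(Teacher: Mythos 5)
The main gap is in Case~(b), the lifting step. You propose to lift a complement from $S\subseteq\lfloor B_Y\rfloor$ to $Y$ via the short exact sequence $0\to\cO_Y(D-S)\to\cO_Y(D)\to\cO_S(D|_S)\to 0$ and Kawamata--Viehweg vanishing for $H^1$ of $\cO_Y(D-S)$. But that vanishing needs the relevant $\Rr$-divisor class (essentially $-(n+1)(K_Y+B_Y)$) to be nef and \emph{big} over $Z$, and in the Calabi--Yau regime $K_Y+B_Y\sim_{\Rr,Z}0$, so this class is numerically trivial over $Z$ and cannot become big. Adding a small multiple of the pullback of a klt Calabi--Yau boundary $\Delta$ does not help: $\Delta$ also satisfies $K_X+\Delta\sim_{\Rr,Z}0$, so it cannot turn an $\Rr$-trivial class into a big one. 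This is exactly why the paper's own Proposition~\ref{prop:adjlift} is stated only under the hypothesis that $-(K_X+B)$ is nef and big over a neighbourhood of $z$, and why in the surface Theorem~\ref{thm:surfcmpt3} the adjunction-lifting argument is invoked only in the case $\kappa(X/Z,B-B_\Phi)+\dim Z=2$. In the remaining cases (where the anticanonical is not big over the base) the paper instead lifts via the canonical bundle formula (Proposition~\ref{prop:cbflift1}). So your Case~(b) would fail precisely in the situations that Theorem~\ref{thm:lctype} is designed to cover.

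There are two further weak points. First, the opening "coefficient-descent" step is asserted without a concrete mechanism; the DCC hypothesis on $\Gamma\cap\Qq$ does not directly give a finite rational subset $\Gamma_0$ with the stated compatibility, and the paper instead obtains finiteness of relevant coefficients only along strictly lc Calabi--Yau loci (via \cite[Theorem~5.20]{HLS19} inside Proposition~\ref{prop:maxlcindex}), which is a genuinely different and more delicate mechanism. Second, in Case~(a) one must be careful: the dlt modification $f$ pulls back $K_X+B^+$, so $f^*\Delta$ can have negative discrepancy coefficients on the exceptional divisors, and the claimed inequality $(1-t)B_Y^++t\,f^*\Delta\ge B_Y$ is not automatic; the paper's Lemma~\ref{lem:cyt} handles the corresponding point via an explicit interpolation argument. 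The paper's actual route avoids adjunction to $S$ entirely: it decomposes $B^+-B_{\cN\_\Phi}$ using the relative Nakayama--Zariski decomposition, runs MMPs to obtain a contraction $\pi'\colon X'\to Z'$ defined by $-(K_{X'}+B'_{\cN\_\Phi}+F')$, controls the index over the finitely many strictly lc Calabi--Yau points of $Z'$ via Proposition~\ref{prop:maxlcindex}, and then applies Proposition~\ref{prop:splctype}, which constructs a klt $\Rr$-complement by interpolating with the Calabi--Yau boundary and reduces to Theorem~\ref{thm:3foldsbddcmpt}. You would need to replace the Kawamata--Viehweg step with something like this to make the argument go through.
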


Here, we say that $X$ is \emph{of Calabi-Yau type} over a neighborhood of $z$, if there exists a boundary $C$ on $X$ such that $(X,C)$ is klt and $K_X+C\sim_{\Rr,Z}0$ over a neighborhood of $z$; see Definition \ref{defn:cytype}.

\medskip

Our last main result is the boundedness of complements for surface pairs where we do not require the pair has a klt $\Rr$-complement nor $\Ii\cap \Qq$ is DCC. Theorem \ref{thm:surfcmpt} completely answers a question of Shokurov \cite[1.3 Conjecture on complements]{Sho00} for surfaces.

\begin{thm}\label{thm:surfcmpt}    
Let $l$ be a positive integer. Then there exists a finite set of positive integers $\cN$ depending only on $l$ satisfying the following.

Assume that $(X/Z\ni z,B)$ is an $\Rr$-complementary surface pair. Then $(X/Z\ni z,B)$ has an $n$-complement for some $n\in\cN$ such that $l\mid n$.
\end{thm}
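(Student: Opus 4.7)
The plan is to reduce the general $\mathbb{R}$-complement problem to settings where boundedness is already known — the surface analogue of Theorem \ref{thm:3foldsbddcmpt} (for the klt sub-case) and boundedness of complements in dimension one — and to handle the absence of a DCC hypothesis on the coefficients by parametrizing $\mathbb{R}$-complements as a rational polytope on a fixed birational model.

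First I would pass to a $\mathbb{Q}$-factorial dlt model: given an $\mathbb{R}$-complement $(X/Z\ni z, B^+)$ of $(X/Z\ni z, B)$, take a $\mathbb{Q}$-factorial dlt modification $f\colon (Y, B_Y^+) \to (X, B^+)$ and set $B_Y := f^{-1}_* B + \Exc(f)$, so that $f$-pushforward reduces the problem to $(Y/Z\ni z, B_Y)$. If $(Y, B_Y^+)$ is klt over $z$, the surface version of Theorem \ref{thm:3foldsbddcmpt} applies directly. Otherwise, pick a prime divisor $S \subseteq \lfloor B_Y^+ \rfloor$ lying over $z$; by adjunction $(K_Y + B_Y^+)|_S = K_S + B_S^+$ with $B_S^+ = \Diff_S(B_Y^+ - S)$, and $(S/Z\ni z, B_S^+)$ is an $\mathbb{R}$-complementary pair of dimension at most $1$. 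Boundedness of complements in dimension one — which ultimately reduces to the classification $\{1,2,3,4,6\}$ of possible torsion indices on rational or elliptic curves, adjusted to be divisible by $l$ — yields an $n$-complement $B_S^{++}$ of $(S/Z\ni z, B_S)$ with $n$ in a finite set and $l \mid n$. This complement is then lifted to an $n$-complement of $(Y/Z\ni z, B_Y)$ via the short exact sequence $0 \to \mathcal{O}_Y(-S) \to \mathcal{O}_Y \to \mathcal{O}_S \to 0$, twisted appropriately, together with relative Kawamata-Viehweg vanishing on the surface $Y$.

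The main obstacle lies in handling the non-DCC coefficients of $B$: the lifting step produces an $n$-complement of $(Y/Z\ni z, B_Y)$ only when $B_Y$ itself has controlled coefficients. To bridge this gap I would invoke a uniform rational polytope of $\mathbb{R}$-complements: once the support $D := \Supp(B_Y^+) = \sum D_i$ is fixed, the locus of divisors $B' = \sum b'_i D_i$ with $0 \leq B' \leq B_Y^+$, $(Y, B')$ lc over $z$, and $K_Y + B' \sim_{\mathbb{R}, Z} 0$ over $z$ forms a rational polytope $\mathcal{P}$ inside $\bigoplus_i \mathbb{R} D_i$ containing $B_Y^+$. Writing $B_Y^+$ as a convex combination of rational vertices of $\mathcal{P}$ produces a $\mathbb{Q}$-complement of $(Y/Z\ni z, B_Y)$ with controlled denominators, to which the already-settled DCC case (the surface analogue of Theorem \ref{thm:lctype}) applies. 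The crux of the argument is to show that the vertices of $\mathcal{P}$, and hence the denominators of the resulting $\mathbb{Q}$-complements, are bounded uniformly in $(X, B)$; in dimension two this uniformity follows from ACC for log canonical thresholds and boundedness of the combinatorial types of dlt surface pairs, whereas the corresponding input in dimension three is precisely what makes Theorem \ref{thm:3foldsbddcmpt} hard.
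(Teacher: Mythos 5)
Your high-level skeleton --- pass to a $\Qq$-factorial dlt modification, dispose of the klt sub-case by the surface analogue of Theorem \ref{thm:3foldsbddcmpt}, and otherwise restrict to a component $S \subseteq \lfloor B_Y^+\rfloor$, solve on $S$ using curve complements, and lift via the short exact sequence $0 \to \Oo_Y(-S) \to \Oo_Y \to \Oo_S \to 0$ and relative Kawamata-Viehweg vanishing --- is exactly the skeleton of Proposition \ref{prop:adjlift} and one branch of Theorem \ref{thm:surfcmpt3}. But there are two genuine gaps.

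First, and more seriously: the lifting step silently requires $-(K_Y + B_Y)$ to be nef \emph{and big} over a neighborhood of $z$. Relative Kawamata-Viehweg vanishing, needed to make the restriction map $H^0(W, K_W + S_W + L_W) \to H^0(S_W, K_{S_W} + L_W|_{S_W})$ surjective, fails without bigness. Even after running an anti-MMP to a good minimal model, $-(K_{Y'} + B'_{n\_\Phi})$ is only semi-ample over $Z$, and it is big over $Z$ precisely when $\kappa(Y/Z, B_Y - (B_Y)_\Phi) + \dim Z = 2$. The paper stratifies by this invariant and uses your lifting only in that top stratum. The lower strata need completely different machinery: the canonical bundle formula plus curve complements when $\kappa(Y/Z,B_Y-(B_Y)_{\Phi_i})+\dim Z = 1$ (Proposition \ref{prop:21case}), and a global index bound via \cite[Theorem 2.12]{CHL22} when it equals $0$ (Lemma \ref{lem:global}). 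Your proposal has no plan for these cases, and the adjunction-plus-vanishing route simply does not apply there.

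Second, the ``uniform rational polytope'' step is not a proof. The claim that the vertices of the polytope $\mathcal{P} = \{B' : 0 \le B' \le B_Y^+,\ (Y,B')\text{ lc over }z,\ K_Y+B'\sim_{\Rr,Z}0\}$ have denominators bounded independently of $(Y, B_Y^+)$ does not follow from ACC for log canonical thresholds or from boundedness of combinatorial types --- arbitrarily long chains of $(-2)$-curves over $z$ already produce unbounded denominators in the linear conditions cutting out $\mathcal{P}$. The paper avoids this entirely with the arithmetic observation in Lemma \ref{lem:ncomplneq} (packaged as Lemma \ref{lem:N_Phicompl}): for a fixed finite $\cN$ and hyperstandard $\Phi$, any $n$-complement ($n\in\cN$) of the \emph{rounding} $B_{\cN\_\Phi}$ is automatically an $n$-complement of $B$. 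This reduces real coefficients to coefficients in the fixed DCC set $\Gamma(\cN,\Phi)$ with no uniformity claim about polytopes needed. Incorporating the Iitaka-dimension stratification and the $\Gamma(\cN,\Phi)$-rounding is essential to turn your outline into a proof.
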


\medskip
\noindent{\bf Sketch of proofs.}
We now sketch the proofs of Theorems \ref{thm:3foldsbddcmpt} and \ref{thm:surfcmpt}. For convenience, in what follows, we will assume that $l=1$ and $(X,B)$ is a $\Qq$-factorial klt log Calabi-Yau pair, that is, $Z=z=\{pt\}$, $(X,B)$ is $\Qq$-factorial klt and $K_X+B\sim_\Rr0$.

\smallskip

We first sketch the proof of Theorem \ref{thm:surfcmpt}. If $X$ is of Fano type, then $(X,B)$ is $\cN_1$-complementary for some finite set of positive integers $\cN_1$ by Theorem \ref{thm:ftbdd}; here $(X,B)$ being $\cN_1$-complementary means that $(X,B)$ is $n$-complementary for some $n\in\cN_1$ (see Definition \ref{defn complement}). Thus we may assume that $X$ is not of Fano type and $\kappa(X,B-B_{\Phi_1})\le1$, where $\Phi_1:=\Gamma(\cN_1,\{0,1\})$ is a hyperstandard set and $B_{\Phi_1}$ is a $\mathbb{Q}$-divisor with coefficients in $\Phi_1$ such that $0 \le B_{\Phi_1} \le B$ (see Definition \ref{defn:phiset}). Suppose that $\kappa(X,B-B_{\Phi_1})=\kappa(X,B-B_{\Phi_2})=1$, where $\cN_2$ is a finite set of positive integers given by Theorem \ref{thm:curcmpt} and  $\Phi_2:=\Gamma(\cN_1\cup\cN_2,\{0,1\})$. In this case, we claim that $(X,B)$ is $\cN_2$-complementary.
Indeed, although $X$ is not of Fano type, by Lemma \ref{lem:runantiMMP} we can still run an MMP on $-(K_X+B_{\Phi_2})$ and get a good minimal model $X'$ such that $-(K_{X'}+B_{\Phi_2}')$ is semi-ample and hence defines a contraction $\pi':X'\to Z'$, where $D'$ denotes the strict transform of $D$ on $X'$ for any $\Rr$-divisor $D$ on $X$. Then we run an MMP on $-(K_{X'}+B_{\Phi_1}')$ over $Z'$ and reach a model $X''$ on which $-(K_{X''}+B_{\Phi_1}'')$ is semi-ample over $Z'$, where $D''$ denotes the strict transform of $D$ on $X''$ for any $\Rr$-divisor $D$ on $X$.
As $\kappa(X,B-B_{\Phi_1})=\kappa(X,B-B_{\Phi_2})=1$, the natural morphism $\pi'':X''\to Z'$ is the contraction defined by $-(K_{X''}+B_{\Phi_1}'')$ over $Z'$. By the similar arguments as in \cite[Proposition 6.3]{Bir19} and using Effective Adjunction \cite[Conjecture 7.13.3 and Theorem 8.1]{PS09}, there exists a positive integer $p$ which only depends on $\Phi_1$ such that $p(K_{X''}+B_{\Phi_1}'')\sim p(\pi'')^*(K_{Z'}+B_{Z'}^{(1)}+\bM_{\pi'',Z'})$ and $p\bM_{\pi''}$ is base point free, where $B_{Z'}^{(1)}$ and $\bM_{\pi''}$ are given by the canonical bundle formula for $(X'',B_{\Phi_1}'')$ over $Z'$ in Proposition \ref{prop:cbfindex}. It follows that $p(K_{X'}+B_{\Phi_2}')\sim p(\pi')^*(K_{Z'}+B_{Z'}^{(2)}+\bM_{\pi',Z'})$ and $p\bM_{\pi'}$ is base point free, where $B_{Z'}^{(2)}$ and $\bM_{\pi'}$ are given by the canonical bundle formula for $(X',B_{\Phi_2}')$ over $Z'$. We may assume that $p\mid n$ for any $n \in \cN_2$. As $p\bM_{\pi'}$ is base point free, one can find an effective $\Qq$-divisor $M_{Z'}$ such that $pM_{Z'}\sim p\bM_{\pi',Z'}$, $M_{Z'}$ has no common components with $B_{Z'}^{(2)}$, and $(Z',B_{Z'}^{(2)}+M_{Z'})$ has an $n$-complement for some $n\in\cN_2$. Then we can lift this complement to $X$ and get an $n$-complement of $(X,B)$; see Proposition \ref{prop:cbflift1}. If $\kappa(X,B-B_{\Phi_2})=0$, then we can easily show that $n_0(K_X+B)\sim0$ for some positive integer $n_0$ which only depends on $\Phi_2$; see Lemma \ref{lem:global}. Hence $\cN_1\cup\cN_2\cup\{n_0\}$ has the required property.

\smallskip

Now we sketch the proof of Theorem \ref{thm:3foldsbddcmpt}. The main strategy is similar. One of the key steps is to construct a positive integer $n_0$ and finite sets of positive integers $\cN_{i}$ ($i=1,2,3$) such that 
\begin{enumerate}
    \item if $\kappa(X,B-B_{\Phi_1})=3$, then $(X,B)$ is $\cN_1$-complementary,
    \item if $\kappa(X,B-B_{\Phi_1})=\kappa(X,B-B_{\Phi_2})=2$, then $(X,B)$ is $\cN_2$-complementary,
    \item if $\kappa(X,B-B_{\Phi_2})=\kappa(X,B-B_{\Phi_3})=1$, then $(X,B)$ is $\cN_3$-complementary, and
    \item if $\kappa(X,B-B_{\Phi_3})=0$, then $(X,B)$ is $n_0$-complementary,
\end{enumerate}
where $\Phi_i:=\Gamma(\cup_{j=1}^i\cN_j,\{0,1\})$ for any $1\le i\le 3$; see Section \ref{sec6} for the details. However, there are some issues when we construct these finite sets. One issue is that when we apply the canonical bundle formula, Effective Adjunction is still open when the relative dimension is $\ge2$. But in our setting we can give a positive answer to Effective Adjunction; see Proposition \ref{prop:bsa} for the details. On the other hand, there is also an issue when we try to lift complements from lower dimensional varieties. More precisely, it may happen that some components of $\Supp B$ have images of codimension $\ge2$ in $Z$. Therefore we must lift complements more carefully; see Proposition \ref{prop:cbflift1} and Section \ref{sec6} for the details.

\medskip

\noindent\textit{Structure of the paper.} 
We outline the organization of the paper. In Section \ref{sec2}, we introduce some notation and tools which will be used in this paper, and prove certain basic results. In Section \ref{sec3}, we recall the canonical bundle formula, some well-known results, as well as some new results. In Section \ref{sec4}, we prove the boundedness of complements for sdlt curves. In Section \ref{sec5}, we prove Theorem \ref{thm:surfcmpt}. In Section \ref{sec6}, we prove Theorem \ref{thm:3foldsbddcmpt}. In Section \ref{sec7}, we prove Theorem \ref{thm:lctype}.

\medskip

\noindent{\bf Acknowledgements.} 
The authors would like to thank Qianyu Chen, Chen Jiang, Jihao Liu, Wenfei Liu, Lingyao Xie and Chenyang Xu for valuable discussions and suggestions. The third named author would also like to thank his advisor Christopher D. Hacon for his constant support and useful suggestions. The first named author was supported by the China post-doctoral grants BX2021269 and 2021M702925. The second named author was supported by National Key Research and Development Program of China (Grant No. 2020YFA0713200). The third named author was partially supported by NSF research grants no: DMS-1801851, DMS-1952522 and by a grant from the Simons Foundation; Award Number: 256202. Finally the authors would like to thank the referees for their careful reading of this paper and the many useful comments.

\section{Preliminaries}\label{sec2}
\subsection{Arithmetic of sets}

\begin{defn}\label{defn:phiset}
(1) We say that a set $\Gamma\subseteq[0,1]$ satisfies the \emph{descending chain condition} (DCC) if any decreasing sequence $a_1\ge a_2\ge\cdots$ in $\Gamma$ stabilizes. We say that $\Gamma$ satisfies the \emph{ascending chain condition} (ACC) if any increasing sequence $a_1\le a_2\le \cdots$ in $\Gamma$ stabilizes.

\smallskip

(2) Suppose that $\fR\subseteq[0,1]\cap\Qq$ is a finite set. We define
$$\Phi(\fR):=\left\{1-\frac{r}{l}\mid r\in\fR,l\in\Zz_{>0}\right\}\cap[0,1]$$
to be the set of \emph{hyperstandard multiplicities} associated to $\fR$ (cf. \cite[2.2]{Bir19}). We may say that $\Phi(\fR)$ is the \emph{hyperstandard set} associated to $\fR$. When we say $\Phi\subseteq[0,1]\cap\Qq$ is a hyperstandard set, we mean that $\Phi=\Phi(\fR)$ for some finite set $\fR\subseteq[0,1]\cap\Qq$. We usually assume $0,1\in\fR$ without mentioning, so $\Phi(\{0,1\})\subseteq\Phi(\fR)$.

\smallskip

(3) (cf. \cite[Page 30]{Sho20}) Let $\cN\subseteq\Zz_{>0}$, $\fR\subseteq[0,1]\cap\Qq$ be two finite sets, and $\Phi:=\Phi(\fR)$. We define 
$$\Gamma(\cN,\Phi(\fR)):=\bigg\{1-\frac{r}{l}+\frac{1}{l}\sum_{n\in\cN}\frac{m_n}{n+1}\mid r\in\fR,l\in\Zz_{>0},m_n\in\Zz_{\ge0}\bigg\}\cap[0,1].$$
By Remark \ref{rem:phiset}(1), $\Gamma(\cN,\Phi(\fR))$ is independent of the choice of $\fR$. Hence we may write $\Gamma(\cN,\Phi)$ instead of $\Gamma(\cN,\Phi(\fR))$ for convenience.
By Remark \ref{rem:phiset}(2), $\Gamma(\cN,\Phi)$ is a hyperstandard set. In particular, it is a DCC set whose only accumulation point is $1$. Then for any $b\in[0,1]$, we define
$$b_{\cN\_\Phi}:=\max\left\{b'\mid b'\le b,\, b'\in\Ii(\cN,\Phi)\right\}.$$ 

If $\cN=\{n\}$ (respectively $\cN=\emptyset$), we may write $b_{n\_\Phi}$ (respectively $b_{\Phi}$) rather than $b_{\cN\_\Phi}$. 
\end{defn}
\begin{rem}\label{rem:phiset}
(1) If $\fR'\subseteq[0,1]\cap\Qq$ is a finite set such that $\Phi(\fR)=\Phi(\fR')$, then $\Gamma(\cN,\Phi(\fR))=\Gamma(\cN,\Phi(\fR')).$ Indeed, for any $r'\in\fR'$, there exist $r\in\fR$ and $l\in\Zz_{>0}$ such that $r'=r/l$. Thus $\Gamma(\cN,\Phi(\fR))\supseteq\Gamma(\cN,\Phi(\fR'))$, and the converse inclusion follows similarly.

\smallskip

(2) $\Ii(\cN,\Phi)$ is the hyperstandard set associated to the following finite set
$$\fR'' := \left\{r-\sum_{n\in\cN}\frac{m_n}{n+1}\mid r\in\fR,m_n\in\Zz_{\ge0}\right\}\cap[0,1].$$
Indeed,
\begin{align*}
\Phi(\fR'')&=\left\{1-\frac{r''}{l}\mid r''\in\fR'',l\in\Zz_{>0}\right\}\cap[0,1]\\
&= \left\{1-\frac{r}{l}+\frac{1}{l}\sum_{n\in\cN}\frac{m_n}{n+1}\mid r\in\fR,l\in\Zz_{>0},m_n\in\Zz_{\ge0}\right\}\cap[0,1] \\&= \Ii(\cN,\Phi).
\end{align*}

(3) If $\cN_1$ and $\cN_2$ are two finite sets of positive integers, then
$$\Gamma(\cN_{1}\cup\cN_2,\Phi)=\Gamma(\cN_2,\Gamma(\cN_1,\Phi)).$$ 
Indeed, let
$$\fR_1 := \left\{r-\sum_{n\in\cN_1}\frac{m_n}{n+1}\mid r\in\fR,m_n\in\Zz_{\ge0}\right\}\cap[0,1].$$
Then $\Gamma(\cN_1,\Phi)=\Phi(\fR_1)$ by (2).
Therefore
\begin{align*}
&\Gamma(\cN_2,\Gamma(\cN_1,\Phi))=\Gamma(\cN_2,\Phi(\fR_1))\\
=&\bigg\{1-\frac{r_1}{l}+\frac{1}{l}\sum_{n\in\cN_2}\frac{m_n}{n+1}\mid r_1\in\fR_1,l\in\Zz_{>0},m_n\in\Zz_{\ge0}\bigg\}\cap[0,1]\\
=&\bigg\{1-\frac{1}{l}(r-\sum_{n'\in\cN_1}\frac{m'_{n'}}{n'+1})+\frac{1}{l}\sum_{n\in\cN_2}\frac{m_n}{n+1}\mid r\in\fR,l\in\Zz_{>0},m'_{n'},m_n\in\Zz_{\ge0}\bigg\}\cap[0,1]\\
=&\bigg\{1-\frac{r}{l}+\frac{1}{l}(\sum_{n'\in\cN_1}\frac{m'_{n'}}{n'+1}+\sum_{n\in\cN_2}\frac{m_n}{n+1})\mid r\in\fR,l\in\Zz_{>0},m_n,m'_{n'}\in\Zz_{\ge0}\bigg\}\cap[0,1]\\
=&\Gamma(\cN_{1}\cup\cN_2,\Phi).
\end{align*}
\end{rem}

\smallskip

The following lemma was observed by the second named author. It will play an important role in the proof of the main theorems.

\begin{lem}\label{lem:ncomplneq}
Assume that $\fR\subseteq[0,1]\cap\Qq$ is a finite set, $\Phi:=\Phi(\fR)$, and $n$ is a positive integer such that $n\fR\subseteq\Zz$. Then for any $b\in[0,1]$, we have
$$\frac{\lf(n+1)\{b\}\rf}{n}+\lf b\rf \ge b_{n{\_}\Phi}.$$
\end{lem}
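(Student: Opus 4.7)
The plan is to reduce the inequality to the emptiness of $\Gamma(\{n\},\Phi)$ in certain rational intervals, and then rule out any putative element by an integrality argument.

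Concretely, I would first handle $b = 1$ trivially (since $\lfloor b\rfloor = 1 \ge b_{n\_\Phi}$) and then assume $b \in [0, 1)$, so that $\{b\} = b$ and $\lfloor b\rfloor = 0$. Setting $k := \lfloor (n+1)b \rfloor \in \{0, 1, \ldots, n\}$, the inequality reduces to $b_{n\_\Phi} \le k/n$. The case $k = n$ is immediate since $b_{n\_\Phi} \le 1$, so I assume $0 \le k < n$. Any putative element $b_{n\_\Phi} > k/n$ would, using $b_{n\_\Phi} \le b < (k+1)/(n+1)$, lie in $\Gamma(\{n\},\Phi) \cap (k/n,\,(k+1)/(n+1))$, so the problem reduces to the key claim
\[
\Gamma(\{n\},\Phi) \cap \Bigl(\frac{k}{n},\, \frac{k+1}{n+1}\Bigr) = \emptyset \qquad \text{for every integer } 0 \le k < n.
\]

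To prove this claim, I would pick any $b^* = 1 - r/l + m/(l(n+1)) \in \Gamma(\{n\},\Phi)$ putatively in the interval, with $r \in \fR$, $l \in \mathbb{Z}_{>0}$, $m \in \mathbb{Z}_{\ge 0}$. Setting $s := nr$ (an integer in $[0, n]$ by the hypothesis $n\fR \subseteq \mathbb{Z}$), $L := (n-k)l$, and $M := m + L$, a short denominator-clearing calculation converts the two strict interval inequalities into the single constraint
\[
-L < E < 0, \qquad E := n(M-s) - s,
\]
where $E$ is an integer precisely because $s$ is.

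The contradiction then comes out in two lines: since $E \le -1$ and $s \le n$, the bound $n(M - s) \le s - 1 < n$ forces $M \le s$; feeding this into $E > -L$ gives $n(s - M) < L - s$, and since $n(s - M) \ge 0$ this in turn forces $L > s$. On the other hand, $m \ge 0$ yields $M \ge L$, so $s < L \le M \le s$, absurd. The main obstacle is identifying the right change of variables $(L, M, E)$: the raw inequalities mix the denominators $n$, $n+1$, $l$, $nl$, and $l(n+1)$, and the real work is to repackage them so that the hypothesis $n\fR \subseteq \mathbb{Z}$ delivers integrality of a single quantity $E$ confined to a bounded integer interval, from which the final contradiction drops out mechanically.
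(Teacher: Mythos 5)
Your proof is correct and rests on the same underlying mechanism as the paper's: after reducing to $b<1$, both substitute the explicit form $b_{n\_\Phi} = 1 - r/l + m/(l(n+1))$ and derive a contradiction from the integrality of $nr$ (equivalently, your $s$). Your Farey-gap reformulation and the change of variables to $(L,M,E)$ give a tidy repackaging — the paper instead first reduces WLOG to $b = b_{n\_\Phi}$ and works with the shifted integer $k' = \lfloor(n+1)b\rfloor - n + 1$ — but once unwound these are the same inequalities and the same contradiction.
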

\begin{proof}
Without loss of generality, we may assume that $1>b=b_{n{\_}\Phi}=1-\frac{r}{l}+\frac{m}{l(n+1)}\in\Ii(\{n\},\Phi)$ for some $r\in\fR,l\in\Zz_{>0}$ and $m\in\Zz_{\ge0}$. It suffices to show that
$$\lf1-\frac{(n+1)r}{l}+\frac{m}{l}\rf\ge-\frac{nr}{l}+\frac{mn}{l(n+1)}.$$
Suppose on the contrary that there exists an integer $k$ such that
$$1-\frac{(n+1)r}{l}+\frac{m}{l}<k\text{ and } -\frac{nr}{l}+\frac{mn}{l(n+1)}>k-1.$$
The first inequality above gives us that $l-lk+m<(n+1)r$, and thus $l-lk+m\le nr$ as $nr\in\Zz$. Therefore, we have
$$\frac{mn}{n+1}> (k-1)l+nr\ge(k-1)l+l-lk+m=m,$$
a contradiction.
\end{proof}

\begin{lem}\label{lem:ncomplneq2}
Let $\cN$ be a finite set of positive integers, $\Phi$ a hyperstandard set, and $n\in\cN$. Suppose that $b,b^+\in[0,1]$ such that $nb^+\in\Zz$ and 
$$nb^+\ge \lf(n+1)\{b_{\cN\_\Phi}\}\rf+n\lf b_{\cN\_\Phi}\rf.$$ 
Then $nb^+\ge \lf(n+1)\{b\}\rf+n\lf b\rf$.
\end{lem}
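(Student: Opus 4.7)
The plan is to reduce the lemma to proving
$$\lf(n+1)\{b_{\cN\_\Phi}\}\rf + n\lf b_{\cN\_\Phi}\rf \;\ge\; \lf(n+1)\{b\}\rf + n\lf b\rf,$$
after which the conclusion follows at once from the hypothesis. Write $\Phi=\Phi(\fR)$ with $1\in\fR$, as allowed by Definition \ref{defn:phiset}(2).

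First I would dispose of the trivial case $b=1$. Taking $r=1$, $l=1$, and all summation coefficients equal to $0$ in the defining expression of Definition \ref{defn:phiset}(3) shows that $1\in\Gamma(\cN,\Phi)$, so $b_{\cN\_\Phi}=1=b$ and the displayed inequality is an equality. In the remaining case $b\in[0,1)$, also $b_{\cN\_\Phi}<1$, so $\lf b\rf=\lf b_{\cN\_\Phi}\rf=0$, $\{b\}=b$, and $\{b_{\cN\_\Phi}\}=b_{\cN\_\Phi}$; the displayed inequality then reduces to
$$\lf(n+1)\,b_{\cN\_\Phi}\rf \;\ge\; \lf(n+1)\,b\rf.$$

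The key observation, and the only place where the hypothesis $n\in\cN$ is used, is that
$$\frac{k}{n+1}\in\Gamma(\cN,\Phi)\qquad\text{for every integer } 0\le k\le n+1.$$
Indeed, in the defining expression $1-\frac{r}{l}+\frac{1}{l}\sum_{n'\in\cN}\frac{m_{n'}}{n'+1}$ one takes $r:=1\in\fR$, $l:=1$, $m_n:=k$, and $m_{n'}:=0$ for $n'\in\cN\setminus\{n\}$, obtaining the value $k/(n+1)\in[0,1]$.

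To finish, I would set $k:=\lf(n+1)b\rf\in\{0,\dots,n\}$. Then $k/(n+1)\le b$ and $k/(n+1)\in\Gamma(\cN,\Phi)$, so the maximality built into the definition of $b_{\cN\_\Phi}$ forces $k/(n+1)\le b_{\cN\_\Phi}$; consequently $(n+1)b_{\cN\_\Phi}\ge k$, and taking floors yields $\lf(n+1)b_{\cN\_\Phi}\rf\ge k=\lf(n+1)b\rf$, as required. There is no real obstacle in this argument; its entire content is the elementary combinatorial fact that every fraction $k/(n+1)$ with $0\le k\le n+1$ lies in $\Gamma(\cN,\Phi)$, a direct consequence of $n\in\cN$.
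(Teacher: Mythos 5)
Your proof is correct and follows essentially the same route as the paper's: the paper likewise introduces $b' = \lf(n+1)b\rf/(n+1)$ (your $k/(n+1)$), notes that $b'\in\Gamma(\cN,\Phi)$, and uses the chain $b'\le b_{\cN\_\Phi}\le b$ together with $\rddown{(n+1)b'}=\rddown{(n+1)b}$ to conclude. The only difference is cosmetic: you spell out explicitly why $k/(n+1)\in\Gamma(\cN,\Phi)$ (via $r=1$, $l=1$, $m_n=k$), which the paper leaves implicit.
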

\begin{proof}
If $b=1$, then $b^+=b_{\cN\_\Phi}=1$ and there is nothing to prove. 

If $b<1$, then $b_{\cN\_\Phi}\le b<1$.
It suffices to show that 
$
\rddown{(n+1)b}=
\rddown{(n+1)b_{\cN\_\Phi}}$. Let $$b':=\max\{\frac{l}{n+1}\mid \frac {l}{n+1}\le b, l\in\Zz_{\ge0}\}\in \Gamma(\cN,\Phi).$$ 
By the construction, $\rddown{(n+1)b'}=\rddown{(n+1)b}$, which implies that $\rddown{(n+1)b}=\rddown{(n+1)b_{\cN\_\Phi}}$ as $b'\le b_{\cN\_\Phi}\le b$. 
\end{proof}

\subsection{Divisors}

Let $\F$ be either the rational number field $\Qq$ or the real number field $\Rr$. Let $X$ be a normal variety and $\WDiv(X)$ the free abelian group of Weil divisors on $X$. Then an \emph{$\F$-divisor} is defined to be an element of $\WDiv(X)_\F:=\WDiv(X)\otimes\F$.

A \emph{b-divisor} on $X$ is an element of the projective limit
$$\textbf{WDiv}(X)=\lim_{Y\to X}\WDiv (Y),$$
where the limit is taken over all the pushforward homomorphisms $f_*:\WDiv (Y)\to\WDiv(X)$ induced by proper birational morphisms $f:Y\to X$. In other words, a b-divisor $D$ on $X$ is a collection of Weil divisors $\textbf{D}_Y$ on higher models $Y$ of $X$ that are compatible with respect to pushforward. The divisor $\textbf{D}_Y$ is called the \emph{trace} of $\textbf{D}$ on the birational model $Y$. A \emph{b-$\F$-divisor} is defined to be an element of $\textbf{WDiv}(X)\otimes\F$, and the trace of a b-$\F$-divisor is defined similarly.

The \emph{Cartier closure} of an $\F$-Cartier $\F$-divisor $D$ on $X$ is the b-$\F$-divisor $\overline{D}$ with trace $\overline{D}_Y=f^*D$ for any proper birational morphism $f:Y\to X$. A b-$\F$-divisor $\textbf{D}$ on $X$ is \emph{b-$\F$-Cartier} if $\textbf{D}=\overline{D_{Y}}$ where $\overline{D_Y}$ is an $\F$-Cartier $\F$-divisor on a birational model $Y$ of $X$; in this situation, we say $\textbf{D}$ \emph{descends to} $Y$. Moreover, if $D_Y$ is a Cartier divisor, then we say ${\bf D}$ is \emph{b-Cartier}. Let $X\to Z$ be a projective morphism. Then a b-$\F$-divisor is \emph{nef} (respectively \emph{base point free}) \emph{over $Z$} if it descends to a nef (respectively base point free) over $Z$ $\F$-divisor on some birational model of $X$.

\smallskip

Assume that $\Gamma\subseteq[0,1]$ is a set, and $B:=\sum b_iB_i$, $B':=\sum b_i'B_i$ are two $\Rr$-divisors on $X$, where $B_i$ are prime divisors. By $B\in\Gamma$ we mean $b_i\in\Gamma$ for any $i$. We define
$\lf B\rf:=\sum\lf b_i\rf B_i,\, \{B\}:=\sum\{b_i\}B_i,\, ||B||:=\max\{|b_i|\}$,
and $B\wedge B':=\sum\min\{b_i,b_i'\}B_i$. Assume that $\cN$ is a finite set of positive integers and $\Phi$ is a hyperstandard set. We define
$$B_{\cN\_\Phi}:=\sum (b_i)_{\cN\_\Phi}B_i.$$
If $\cN=\{n\}$ (respectively $\cN=\emptyset$), we may write $B_{n\_\Phi}$ (respectively $B_{\Phi}$) instead of $B_{\cN\_\Phi}$. 

\begin{defn}
(1) We say $\pi: X \to Z$ is a \emph{contraction} if $X$ and $Z$ are normal quasi-projective varieties, $\pi$ is a projective morphism, and $\pi_*\Oo_X = \Oo_Z$.

(2) We say that a birational map $\phi: X \dashrightarrow Y$ is a \emph{birational contraction} if $\phi$ is projective and $\phi^{-1}$  does not contract any divisors.
\end{defn}

\begin{lem}\label{lem:relamp}
Suppose that $\tau:Z''\to Z'$ and $Z'\to Z$ are contractions. Suppose that $H''$ (respectively $H'$) is an $\Rr$-Cartier $\Rr$-divisor on $Z''$ (respectively $Z'$) which is ample over $Z'$ (respectively $Z$). Then $\epsilon H''+\tau^*H'$ is ample over $Z$ for any $0<\epsilon\ll1$.
\end{lem}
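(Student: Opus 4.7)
The plan is to apply Kleiman's criterion for relative ampleness to the composite morphism $Z''\to Z$ and to exploit the fact that $\tau^*H'$ is nef over $Z$ while being strictly positive on curves not contracted by $\tau$.

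First I would fix an $\Rr$-Cartier $\Rr$-divisor $A$ on $Z''$ that is ample over $Z$ (it exists because $Z''\to Z$ is projective and $Z''$ is quasi-projective), and consider the compact slice
\[
S:=\{\alpha\in\NE(Z''/Z)\setminus\{0\}\mid A\cdot\alpha=1\}
\]
of the relative Mori cone. By Kleiman's criterion it suffices to show that for all $0<\epsilon\ll1$, $(\epsilon H''+\tau^*H')\cdot\alpha>0$ on $S$.

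Next I would split $S$ into two pieces according to how classes interact with $\tau$. Let
\[
S_0:=\{\alpha\in S\mid \tau^*H'\cdot\alpha=0\}.
\]
Since $H'$ is ample over $Z$, Kleiman's criterion applied over $Z$ tells us that any $\alpha\in\NE(Z''/Z)$ with $\tau^*H'\cdot\alpha=0$ pushes to $0$ in $N_1(Z'/Z)$; hence $\alpha$ lies in the subcone $\NE(Z''/Z')\subseteq\NE(Z''/Z)$. Since $H''$ is ample over $Z'$, another application of Kleiman's criterion gives $H''\cdot\alpha>0$ for every $\alpha\in S_0$. By compactness of $S_0$ there is some $\delta>0$ with $H''\cdot\alpha\ge\delta$ for every $\alpha\in S_0$.

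Then by continuity I would choose an open neighborhood $U\subseteq S$ of $S_0$ on which $H''\cdot\alpha\ge\delta/2$. On the closed (and hence compact) complement $S\setminus U$, the function $\alpha\mapsto\tau^*H'\cdot\alpha$ is continuous, nonnegative (as $\tau^*H'$ is nef over $Z$), and nowhere zero, so it attains a positive minimum $\eta>0$. Similarly $H''\cdot\alpha$ is bounded below by some constant $-M$ on all of $S$. Choosing $0<\epsilon<\min\{\eta/M,\,1\}$ gives $(\epsilon H''+\tau^*H')\cdot\alpha\ge\epsilon\delta/2$ on $U$ and $(\epsilon H''+\tau^*H')\cdot\alpha\ge -\epsilon M+\eta>0$ on $S\setminus U$, completing the verification on $S$ and hence the proof by Kleiman.

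The only delicate point is the compactness step: $\NE(Z''/Z)$ need not be finitely generated, but what is truly needed is that the slice $S$ is compact, which follows from the existence of a relatively ample class $A$ and the closedness of $\NE(Z''/Z)$ in $N_1(Z''/Z)_{\Rr}$. The rest is a standard two-cone continuity argument, so there is no serious obstacle.
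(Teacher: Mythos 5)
Your proof takes a genuinely different route from the paper's. The paper argues fiberwise: for each closed point $z\in Z$ it restricts $H''$, $H'$, and $\tau$ to the fibers over $z$, applies the absolute statement (\cite[Proposition 1.45]{KM98}) to obtain some $\epsilon_z$ that works on the fiber, upgrades this to a neighborhood of $z$ via openness of ampleness (\cite[Theorem 1.2.17]{Lazpos1}), and finally uses quasi-compactness of $Z$ to extract a uniform $\epsilon$. You instead work directly in $\NE(Z''/Z)$, split the compact slice $S$ into the face $S_0$ where $\tau^*H'$ vanishes and its complement, and run a two-cone compactness/continuity argument. This is a valid and arguably more self-contained alternative, since the paper delegates the essential content to \cite[Proposition 1.45]{KM98}, which is itself proved by a cone argument of roughly this flavor.

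One step in your argument deserves more care than the phrasing suggests. The implication ``$\tau^*H'\cdot\alpha=0\Rightarrow \tau_*\alpha=0$'' does follow from the projection formula together with Kleiman's criterion applied to $H'$ on $Z'/Z$. But the next step --- ``hence $\alpha$ lies in the subcone $\NE(Z''/Z')$'' --- is \emph{not} an instance of Kleiman's criterion. It is a separate statement about relative cones of curves, namely that $\NE(Z''/Z')$ coincides with the face $\ker(\tau_*)\cap\NE(Z''/Z)$. The inclusion $\NE(Z''/Z')\subseteq\ker(\tau_*)$ is obvious, but the reverse inclusion (that every class in the face is a limit of effective combinations of curves contracted by $\tau$) requires a genuine argument or citation; it is essentially the content of the relative Kleiman theory in \cite[\S 1.5]{KM98}, not of the numerical ampleness criterion per se. Once you either cite this fact or replace it by a direct argument that $H''\cdot\alpha>0$ for all $0\neq\alpha\in\ker(\tau_*)\cap\NE(Z''/Z)$, the remainder of your compactness argument is correct.
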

\begin{proof}
Pick any closed point $z\in Z$. Let $Z'_z:=Z'\times_Z\{z\}$, $Z''_z:=Z''\times_Z\{z\}$, $H'_z:=H'|_{Z'_z}$, and $H''_z:=H''|_{Z''_z}$. By assumption $H''_z$ is ample over $Z'_z$ and $H'_z$ is ample. According to \cite[Proposition 1.45]{KM98}, $\epsilon_zH''_z+(\tau|_{Z''_z})^*H'_z$ is ample for any $0<\epsilon_z\ll1$. In particular, $\epsilon_z H''+\tau^*H'$ is ample over $z$. It follows that $\epsilon_z H''+\tau^*H'$ is ample over some neighborhood of $z$ by \cite[Theorem 1.2.17]{Lazpos1}. Since $Z$ is quasi-compact, the lemma follows.
\end{proof}


\subsection{Generalized pairs and singularities}
In this paper, we usually discuss the (sub-)pair in the relative setting $(X/Z\ni z,B)$; we refer the reader to \cite[\S2]{CH21} (cf. \cite{KM98,BCHM10}). Moreover, if the (sub-)pair $(X/Z\ni z,B)$ is (sub-)lc over $z$ for any $z\in Z$, then we say $(X/Z,B)$ is (sub-)lc.

Here we briefly discuss the analogous concepts for generalized pairs, and refer the reader to \cite{BZ16,HL21,HL18} for further details.

\begin{defn}
A \emph{generalized pair} (\emph{g-pair} for short) $(X/Z,B+\bM)$ consists of a contraction $X\to Z$, an effective $\Rr$-divisor $B$ on $X$, and a nef$/Z$ b-$\Rr$-divisor $\bM$ on $X$, such that $K_X+B+\bM_X$ is $\Rr$-Cartier.

\smallskip

Let $(X/Z,B+\bM) $ be a g-pair and $f:W \to X$ a log resolution of $(X,\Supp B)$ to which $\bM$ descends. We may write 
$$K_{W}+B_W+\bM_W=f^*(K_X+B+\bM_X)$$
for some $\Rr$-divisor $ B_W$ on $W$. Let $E$ be a prime divisor on $W$. The \emph{log discrepancy} of $E$ with respect to $(X,B+\bM)$ is defined as
$$a(E,X,B+\bM):=1-\mult_EB_W.$$
We say $(X/Z,B+\bM)$ is \emph{generalized lc} or \emph{glc} (respectively \emph{generalized klt} or \emph{gklt}) if $a(E,X,B+\bM)\ge0$ (respectively $>0$) for any prime divisor $E$ over $X$.

We say that two g-pairs $(X/Z,B+\bM)$ and $(X'/Z,B'+\bM')$ are \emph{crepant} if $X$ is birational to $X'$, $\bM=\bM'$, and $f^*(K_X+B+\bM_X)=(f')^*(K_{X'}+B'+\bM'_{X'})$ for some common resolution $f:W\to X$ and $f':W\to X'$. We also call $(X'/Z,B'+\bM)$ a \emph{crepant model} of $(X/Z,B+\bM)$.
\end{defn}

\begin{lem}\label{lem:dcckltsing}
Let $d$ be a positive integer and $\Gamma\subseteq[0,1]$ a DCC set. Then there is a positive real number $\epsilon$ depending only on $d$ and $\Gamma$ satisfying the following. Assume that $(X,B)$ is a projective klt pair of dimension $d$ such that $K_X+B\sim_{\Rr}0$ and $B\in\Gamma$. Then $(X,B)$ is $\epsilon$-lc.
\end{lem}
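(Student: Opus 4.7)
The plan is to argue by contradiction, invoking the Global ACC theorem of Hacon--McKernan--Xu for numerically trivial log canonical pairs in a fixed dimension.

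First I would assume the statement fails and produce a sequence of projective klt pairs $(X_i, B_i)$ of dimension $d$ with $B_i \in \Gamma$, $K_{X_i} + B_i \sim_{\Rr} 0$, and $a_i := \mld(X_i, B_i) \to 0^{+}$. After discarding finitely many indices I may assume $0 < a_i < 1$. For each $i$ I would pick a prime divisor $E_i$ over $X_i$ computing the mld, so that $a(E_i, X_i, B_i) = a_i$, and extract it on a birational model $f_i : Y_i \to X_i$ whose only exceptional divisor is $E_i$ (possibly $f_i = \id$ when $E_i$ is already a divisor on $X_i$). Such an extraction is standard from BCHM, using that $(X_i, B_i)$ is klt and the log discrepancy of $E_i$ is strictly less than $1$. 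Pulling back gives
$$K_{Y_i} + (f_i)^{-1}_{*} B_i + (1 - a_i) E_i = f_i^{*}(K_{X_i} + B_i),$$
so on $Y_i$ I obtain a projective klt pair with effective boundary, $\Rr$-linearly trivial, whose coefficients all lie in $\Gamma' := \Gamma \cup \{1 - a_j : j \geq 1\}$.

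The key observation is that $\Gamma'$ is again DCC: $\Gamma$ is DCC, and the auxiliary values $1 - a_j$ are bounded above by $1$ and converge increasingly to $1$, so the combined set admits no infinite strictly decreasing chain. I would then apply Global ACC in dimension $d$ to the family of pairs $(Y_i, (f_i)^{-1}_{*} B_i + (1 - a_i) E_i)$; it forces the coefficients of these boundaries to lie in a finite subset of $\Gamma'$. In particular only finitely many distinct values $1 - a_i$ can occur, contradicting $a_i \to 0^{+}$.

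The hard part is arranging the extraction so that the resulting boundary on $Y_i$ is effective and still lies in a DCC set; this is precisely why I would extract only $E_i$ rather than pass to a full log resolution, on which exceptional divisors over smooth points would contribute negative coefficients to the pullback and destroy effectiveness. Beyond this step the argument is a direct application of Global ACC, which is the only substantive external input.
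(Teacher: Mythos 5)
Your argument is correct, and it essentially reconstructs the proof of the result the paper cites: the paper's own ``proof'' consists of a reference to \cite[Lemma 2.48]{Bir19}, and that lemma is itself proved by exactly the contradiction-plus-Global-ACC strategy you describe (extract a single divisor of small log discrepancy, form the crepant pullback, observe that the new coefficient $1-a_i$ still lies in a DCC set, invoke the global ACC theorem for numerically trivial lc pairs). So there is no genuine divergence in method, only in presentation: the paper outsources the argument while you spell it out.

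A few small points worth tightening. You do not actually need the minimal log discrepancy to be attained: the failure of the lemma already hands you, for each $i$, a pair $(X_i,B_i)$ and a prime divisor $E_i$ over $X_i$ with $a(E_i,X_i,B_i)\to 0^+$, so you can work with those divisors directly rather than with a minimizer. Also, the claim that $\{1-a_j : j\}$ is DCC holds for any sequence $a_j\to 0^+$ without first passing to a monotone subsequence (for any $c<1$ only finitely many $a_j$ exceed $1-c$, so any strictly decreasing chain in $\{1-a_j\}$ is finite), though passing to a strictly decreasing subsequence as you implicitly do is of course harmless. Finally, when $E_i$ is already a divisor on $X_i$ you observe $f_i=\id$ suffices; in that case Global ACC applied directly to $(X_i,B_i)$ already gives the contradiction since $1-a_i\in\Gamma$ takes infinitely many values. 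None of these affect correctness.
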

\begin{proof}
The lemma follows from \cite[Lemma 2.48]{Bir19}.
\end{proof}

\begin{defn}
Let $X\to Z$ be a contraction and $D$ an $\Rr$-Cartier $\Rr$-divisor on $X$. We denote by $\kappa(X,D)$ and $\kappa(X/Z,D)$ the \emph{Iitaka dimension} and \emph{relative Iitaka dimension} of $D$ respectively; see \cite[II, \S3.b and \S3.c]{Nak04}.
\end{defn}

\begin{defn}
Let $X\to Z$ be a contraction, $D$ an $\Rr$-Cartier $\Rr$-divisor on $X$, and $\phi:X\dashrightarrow Y$ a birational contraction of normal quasi-projective varieties over $Z$. We say that $Y$ is a \emph{good minimal model} of $D$ over $Z$, if $\phi$ is $D$-negative, $D_Y$ is semiample over $Z$, and $Y$ is $\Qq$-factorial, where $D_Y$ is the strict transform of $D$ on $Y$.
\end{defn}

\begin{lem}\label{lem:finitemmpiitdim}
Let $X\to Z$ be a contraction, and $D_2\ge D_1$ two effective $\Rr$-Cartier $\Rr$-divisors on $X$. Suppose that $X\dashto X'$ is a sequence of steps of the $D_1$-MMP over $Z$. Let $D_2'$ be the strict transform of $D_2$ on $X'$. Then $\kappa(X/Z,D_2)=\kappa(X'/Z,D_2')$.
\end{lem}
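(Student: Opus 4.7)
My plan is to reduce, by induction on the length of the MMP sequence, to a single step $\phi\colon X\dashrightarrow X'$, which is either a divisorial contraction or a flip. Take a common log resolution $p\colon W\to X$ and $q\colon W\to X'$. Since Iitaka dimension is preserved under pullback by proper birational morphisms (both in the absolute and relative settings), we have $\kappa(X/Z,D_2)=\kappa(W/Z,p^*D_2)$ and $\kappa(X'/Z,D_2')=\kappa(W/Z,q^*D_2')$, so it suffices to compare these on $W$.

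Writing $D_2=D_1+D_0$ with $D_0\geq 0$, the negativity lemma applied to the $D_1$-negative step $\phi$ gives that $F_1:=p^*D_1-q^*D_1'$ is effective and $q$-exceptional, where $D_1'=\phi_*D_1$. The divisor $F:=p^*D_2-q^*D_2'$ is also $q$-exceptional, since $D_2'=\phi_*D_2$ is the strict transform, but the contribution coming from $D_0$ means $F$ need not be effective. Decompose $F=F^+-F^-$ with $F^\pm\geq 0$ and $q$-exceptional. Using the standard fact that $q_*\mathcal{O}_W(mF^+)=\mathcal{O}_{X'}$ for a $q$-exceptional effective divisor $F^+$ (valid since $X'$ is normal, by Hartogs), the projection formula yields $q_*\mathcal{O}_W(m(q^*D_2'+F^+))=\mathcal{O}_{X'}(mD_2')$ for $m$ sufficiently divisible, hence $\kappa(W/Z,q^*D_2'+F^+)=\kappa(X'/Z,D_2')$. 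Since $p^*D_2+F^-=q^*D_2'+F^+$, this gives
$$\kappa(W/Z,p^*D_2+F^-)=\kappa(X'/Z,D_2').$$

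The final step is to prove $\kappa(W/Z,p^*D_2)=\kappa(W/Z,p^*D_2+F^-)$. The inequality $\leq$ is immediate since $F^-\geq 0$. For the reverse, one exploits the $D_1$-MMP structure: for a divisorial contraction the contracted divisor $E$ satisfies $\mult_ED_1>0$ by $D_1$-negativity on the contracted curve, hence $\mult_ED_2>0$ since $D_2\geq D_1$; thus $F^-$ (which is concentrated along the strict transform of $E$ on $W$) is already mirrored in the support of $p^*D_2$, and a careful matching of linear systems $|mp^*D_2|$ and $|m(p^*D_2+F^-)|$ on $W$ shows their asymptotic growth rates coincide. For a flip, $\phi$ is small so $F^-$ is automatically $p$-exceptional, and the equality follows directly from the analogous pushforward argument via $p$. \textbf{The hard part} is precisely this last equality in the divisorial case when $F^-$ is not $p$-exceptional, which requires the outlined support analysis to bound the ``extra'' sections gained by adding $F^-$ in terms of sections already present in $p^*D_2$.
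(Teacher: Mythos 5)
Your route is genuinely different from the paper's, and the overall architecture (reduce to a single step, pass to a common resolution $W$, use negativity and pushforward of exceptional divisors) is sound; but the step you yourself label ``the hard part'' is a real gap as written. In the divisorial case you assert that ``a careful matching of linear systems $|mp^*D_2|$ and $|m(p^*D_2+F^-)|$ shows their asymptotic growth rates coincide'' --- that is a restatement of the claim $\kappa(W/Z,p^*D_2+F^-)\le\kappa(W/Z,p^*D_2)$, not an argument for it. The gap is closable with the ingredient you already isolated: write $F^-=aE_W+F^-_{\mathrm{exc}}$, where $E_W$ is the strict transform on $W$ of the contracted divisor $E$ and $F^-_{\mathrm{exc}}$ is effective and $p$-exceptional. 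Since the contracted curves cover $E$ and $D_1$ is negative on them, $\mult_E D_2\ge\mult_E D_1>0$, so for $C\gg 0$ one has $aE_W\le (C-1)p^*D_2$ and hence $p^*D_2+F^-\le Cp^*D_2+F^-_{\mathrm{exc}}$; the standard pushforward identity $p_*\mathcal{O}_W(\rddown{m(Cp^*D_2+F^-_{\mathrm{exc}})})=\mathcal{O}_X(\rddown{mCD_2})$ then gives $\kappa(W/Z,p^*D_2+F^-)\le\kappa(X/Z,CD_2)=\kappa(X/Z,D_2)$, which is the missing inequality.

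For comparison, the paper sidesteps the entire analysis of $F^-$ by a perturbation: choose $\epsilon>0$ so small that $X\dashto X'$ is also a sequence of steps of the $(D_1+\epsilon D_2)$-MMP over $Z$. Because $0\le D_1\le D_2$, the divisor $D_1+\epsilon D_2$ has the same support as $D_2$, hence the same relative Iitaka dimension as $D_2$ on $X$ (and likewise for the strict transforms on $X'$); and the Iitaka dimension of the divisor actually being run in an MMP is preserved, since negativity makes $p^*(D_1+\epsilon D_2)-q^*(D_1'+\epsilon D_2')$ effective and exceptional --- there is no $F^-$ at all. That argument is a few lines, requires no case division into flips and divisorial contractions, and you may prefer to adopt it; but your approach, once the scaling step above is inserted, is also a complete proof.
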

\begin{proof}
Pick a positive real number $\epsilon$ such that $X\dashto X'$ is also a sequence of steps of the $(D_1+\epsilon D_2)$-MMP over $Z$. As $\Supp (D_1+\epsilon D_2)=\Supp D_2$, one can see that
$$\kappa(X'/Z,D_2')=\kappa(X'/Z,D_1'+\epsilon D_2')=\kappa(X/Z,D_1+\epsilon D_2)=\kappa(X/Z,D_2),$$
where $D_1'$ is the strict transform of $D_1$ on $X'$.
\end{proof}

\subsection{Complements}

\begin{defn}\label{defn complement}
We say that a pair $(X/Z\ni z,B^+)$ is an \emph{$\Rr$-complement} of $(X/Z\ni z,B)$ if $(X,B^+)$ is lc, $B^+\ge B$, and $K_X+B^+\sim_\Rr0$ over a neighborhood of $z$. In addition if $(X,B^+)$ is klt over a neighborhood of $z$, then we call $(X/Z\ni z,B^+)$ a \emph{klt $\Rr$-complement} of $(X/Z\ni z,B)$. 
    
Let $n$ be a positive integer. An \emph{$n$-complement} of $(X/Z\ni z,B)$ is a pair $(X/Z\ni z,B^+),$ such that over some neighborhood of $z,$ we have 
\begin{enumerate} 
  \item $(X,B^+)$ is lc,          
  \item $n(K_X+B^+)\sim 0,$ and          
  \item $nB^+\ge n\lfloor B\rfloor+\lfloor(n+1)\{B\}\rfloor.$
\end{enumerate} 
We say that $(X/Z\ni z, B^{+})$ is a \emph{monotonic $n$-complement} of $(X/Z\ni z, B)$ if we additionally have $B^{+}\ge B$.

Let $\cN$ be a non-empty set of positive integers. We say that $(X/Z\ni z,B)$ is \emph{$\cN$-complementary} (respectively \emph{$n$-complementary}, \emph{$\Rr$-complementary}) if $(X/Z\ni z,B)$ has an $m$-complement (respectively $n$-complement, $\Rr$-complement) for some $m\in\cN$. If $(X/Z\ni z,B)$ has an $\Rr$-complement (respectively $n$-complement) for any $z\in Z$, then we say that $(X/Z,B)$ is \emph{$\Rr$-complementary} (respectively \emph{$n$-complementary}).
\end{defn}

Note that if $z' \in \bar{z}$ is a closed point and $(X/Z\ni z',B^+)$ is an $\Rr$-complement (respectively an $n$-complement) of $(X/Z\ni z',B)$, then $(X/Z\ni z,B^+)$ is an $\Rr$-complement (respectively an $n$-complement) of $(X/Z\ni z,B)$. Hence when proving the existence of complements we may assume that $z \in Z$ is a closed point.

\smallskip

The following lemma is well-known to experts. We will use the lemma frequently without citing it in this paper.
\begin{lem}[cf. {\cite[6.1]{Bir19}, \cite[Lemma 2.11]{CH21}}]\label{lemma pullbaclcomplement}
Let $n$ be a positive integer. Assume that $(X/Z\ni z,B)$ is a pair, $\psi:X\dashrightarrow X'$ is a birational contraction over $Z$, and $B'$ is the strict transform of $B$ on $X'$.
\begin{enumerate}
  \item If $(X/Z\ni z,B)$ is $\Rr$-complementary (respectively $n$-complementary), then so is $(X'/Z\ni z,B')$.
  \item Suppose $\psi$ is $-(K_X+B)$-non-positive. If $(X'/Z\ni z,B')$ has an $\Rr$-complement (respectively a monotonic $n$-complement), then so does $(X/Z\ni z,B)$.
\end{enumerate}
\end{lem}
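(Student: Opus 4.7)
The lemma is a standard consequence of the negativity lemma together with the elementary behavior of coefficients under pushforward and pullback by a birational contraction. Throughout I work over a neighborhood of $z$ in $Z$ and fix a common resolution $p\colon W \to X$ and $q\colon W \to X'$ of $\psi$.

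\emph{Part (1).} Set $(B^+)' := \psi_* B^+$. Since $\psi$ is a birational contraction, every prime divisor $D'$ on $X'$ is the strict transform of a unique prime divisor $D$ on $X$, and $\mult_{D'}(B^+)' = \mult_D B^+$. Consequently $(B^+)' \ge B'$, and the floor inequality $n(B^+)' \ge n\lf B'\rf + \lf(n+1)\{B'\}\rf$ is inherited from the same inequality on $X$, since $\psi_*$ commutes componentwise with $\lf\cdot\rf$ and $\{\cdot\}$. The equivalence $K_{X'}+(B^+)' \sim_\Rr 0$ (respectively $n(K_{X'}+(B^+)') \sim 0$) over $z$ follows by pushing forward $K_X+B^+ \sim_\Rr 0$ (respectively $n(K_X+B^+) \sim 0$), because the pushforward of a principal divisor along a birational map is principal. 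To obtain the lc property, I compare the log pullbacks on $W$: the $\Rr$-divisor $p^*(K_X+B^+) - q^*(K_{X'}+(B^+)')$ is $q$-exceptional (its $q$-pushforward vanishes by the previous coefficient identification) and $q$-numerically trivial (since $K_X+B^+ \sim_\Rr 0$ is numerically trivial on every $q$-contracted curve), so the negativity lemma forces it to be zero. Thus $(X',(B^+)')$ is crepant to $(X,B^+)$ and hence lc.

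\emph{Part (2).} Let $(B')^+$ denote the given $\Rr$-complement (respectively monotonic $n$-complement) of $(X'/Z \ni z, B')$. I define $B^+$ on $X$ by setting $\mult_D B^+ := \mult_{\psi_*D}(B')^+$ for every non-$\psi$-exceptional prime divisor $D$ on $X$, and $\mult_D B^+ := 1 - a(D,X',(B')^+)$ for every $\psi$-exceptional prime divisor $D$; equivalently, $B^+$ is the unique $\Rr$-divisor on $X$ with $p^*(K_X+B^+) = q^*(K_{X'}+(B')^+)$. Applying $p_*$ to this identity gives $K_X+B^+ \sim_\Rr 0$ (respectively $n(K_X+B^+) \sim 0$) over $z$, and $(X,B^+)$ is lc because its log pullback to $W$ coincides with the sub-lc log pullback of $(X',(B')^+)$. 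The inequality $B^+ \ge B$ on non-$\psi$-exceptional divisors is immediate from $(B')^+ \ge B'$. On a $\psi$-exceptional prime divisor $D$, the $-(K_X+B)$-non-positivity of $\psi$ combined with the negativity lemma on $W$ yields $a(D,X,B) \ge a(D,X',B')$; together with $a(D,X',(B')^+) \le a(D,X',B')$ (from $(B')^+ \ge B'$) this gives $\mult_D B^+ = 1 - a(D,X',(B')^+) \ge 1 - a(D,X,B) = \mult_D B$. Finally, in the $n$-complement case, the floor inequality $nB^+ \ge n\lf B\rf + \lf(n+1)\{B\}\rf$ is inherited from $X'$ on non-$\psi$-exceptional divisors, and on $\psi$-exceptional divisors it follows from $B^+ \ge B$, the integrality of $nB^+$ (a consequence of $n(K_X+B^+) \sim 0$), and the elementary inequality $\lceil nx\rceil \ge \lf(n+1)x\rf$ valid for every $x \in [0,1)$.

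\emph{Main obstacle.} The delicate step is verifying $B^+ \ge B$ on $\psi$-exceptional prime divisors in part (2); this is precisely where the $-(K_X+B)$-non-positivity hypothesis on $\psi$ is needed, via the negativity lemma, and it explains why part (2), unlike part (1), requires this assumption.
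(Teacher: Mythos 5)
Your proof is correct and follows the same standard route as the cited references [Bir19, 6.1] and [CH21, Lemma 2.11] --- pass to a common resolution, compare coefficients of log pullbacks, and apply the negativity lemma to identify crepant structures --- which is all the paper offers, since it gives no proof beyond the citation. The one detail left implicit is why $K_{X'}+(B^+)'$ in part (1) and $K_X+B^+$ in part (2) are $\Rr$-Cartier at all; this follows because over a neighborhood of $z$ the complement's log canonical divisor is the pullback of an $\Rr$-Cartier divisor from $Z$, and the birational transform of such a pullback to the other model is again such a pullback (and note in part (1) for $n$-complements you do not actually have $(B^+)'\ge B'$ in general, but only the floor inequality, which is the one you need and do verify).
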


The following lemma is an easy consequence of Lemmas \ref{lem:ncomplneq} and \ref{lem:ncomplneq2}. 

\begin{lem}\label{lem:N_Phicompl}
Let $\cN\subseteq\Zz_{>0},$ $\fR\subseteq[0,1]\cap\Qq$ be two finite sets, $\Phi:=\Phi(\fR)$, and $n$ a positive integer such that $n\fR\subseteq\Zz_{\ge0}$. Assume that $(X/Z\ni z,B)$ is a pair.
\begin{enumerate}
    \item If $(X/Z\ni z,B_{\cN\_\Phi})$ is $\cN$-complementary, then so is $(X/Z\ni z,B)$.
    \item Any $n$-complement of $(X/Z\ni z,B)$ is a monotonic $n$-complement of $(X/Z\ni z,B_{n\_\Phi})$.
\end{enumerate}
\end{lem}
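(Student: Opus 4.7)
The plan is to reduce both parts to coefficient-wise arithmetic via Lemmas \ref{lem:ncomplneq} and \ref{lem:ncomplneq2}, applied prime by prime. For a prime divisor $P$ on $X$ set $b:=\mult_P B$ and $b^+:=\mult_P B^+$; these lie in $[0,1]$. Whenever $B^+$ comes from an $n$-complement, the relation $n(K_X+B^+)\sim 0$ together with integrality of $nK_X$ forces $nb^+\in\Zz$ for every $P$, a fact I would use throughout.

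For (1), I would show that any $n$-complement $(X/Z\ni z,B^+)$ of $(X/Z\ni z,B_{\cN\_\Phi})$ with $n\in\cN$ is automatically an $n$-complement of $(X/Z\ni z,B)$. The lc property of $(X,B^+)$ and the linear equivalence $n(K_X+B^+)\sim 0$ over a neighborhood of $z$ transfer verbatim, so only the coefficient bound $nB^+\ge n\lf B\rf+\lf(n+1)\{B\}\rf$ needs to be checked. Componentwise the hypothesis reads $nb^+\ge n\lf b_{\cN\_\Phi}\rf+\lf(n+1)\{b_{\cN\_\Phi}\}\rf$, and with $n\in\cN$ and $nb^+\in\Zz$, Lemma \ref{lem:ncomplneq2} upgrades this directly to $nb^+\ge n\lf b\rf+\lf(n+1)\{b\}\rf$, as required.

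For (2), let $(X/Z\ni z,B^+)$ be an $n$-complement of $(X/Z\ni z,B)$. Combining the coefficient inequality $nb^+\ge n\lf b\rf+\lf(n+1)\{b\}\rf$ with Lemma \ref{lem:ncomplneq} (whose hypothesis $n\fR\subseteq\Zz$ is built into the setup) yields
\[
b^+\;\ge\;\lf b\rf+\frac{\lf(n+1)\{b\}\rf}{n}\;\ge\; b_{n\_\Phi},
\]
hence $B^+\ge B_{n\_\Phi}$, which gives monotonicity. Writing $c:=b_{n\_\Phi}$, it remains to verify the $n$-complement inequality $nb^+\ge n\lf c\rf+\lf(n+1)\{c\}\rf$. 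I would settle this by a short case distinction using $nb^+\in\Zz$, $b^+\ge c$, and $b^+,c\in[0,1]$: the identity $\lf(n+1)b^+\rf=nb^+$ holds whenever $b^+<1$ by integrality, which combined with $b^+\ge c$ handles the case $c<1,\,b^+<1$; the cases $c=1$ and $c<1,\,b^+=1$ are immediate from $\lf(n+1)c\rf\le n$. The lc condition and the linear equivalence $n(K_X+B^+)\sim 0$ are inherited from the given complement.

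The only step that is not an immediate quotation of a preceding lemma is the final case check in (2); I expect this to be the mildest obstacle, as it is essentially forced by the integrality of $nb^+$ together with monotonicity $b^+\ge c$ and the fact that all relevant coefficients lie in $[0,1]$.
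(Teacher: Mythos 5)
Your proposal is correct and matches the approach the paper intends: the paper offers no written proof, stating only that the lemma is ``an easy consequence of Lemmas \ref{lem:ncomplneq} and \ref{lem:ncomplneq2},'' and your argument is precisely that deduction carried out prime divisor by prime divisor. The observation that $n(K_X+B^+)\sim 0$ forces $nb^+\in\Zz$ (since $nK_X$ is integral) is the key fact that licenses the use of Lemma \ref{lem:ncomplneq2} in part (1), and your case analysis in part (2) --- using $\lf(n+1)b^+\rf=nb^+$ when $b^+<1$ together with $b^+\ge b_{n\_\Phi}$ from Lemma \ref{lem:ncomplneq} --- is the right way to finish; all three cases check out.
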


We will use the following lemma frequently in this paper.

\begin{lem}\label{lem:runantiMMP}
Let $\Phi\subseteq[0,1]\cap\Qq$ be a hyperstandard set. Assume that $(X/Z\ni z,B)$ is a pair which is an $\Rr$-complement of itself.
If either
\begin{itemize}
    \item $\dim X=2$ and $X$ is $\Qq$-factorial, or
    \item $\dim X=3$ and $(X,B)$ is dlt over a neighborhood of $z$, 
\end{itemize}
 then $-(K_X+B_\Phi)$ has a good minimal model over a neighborhood of $z$.
\end{lem}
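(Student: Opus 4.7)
The plan is to exploit the identity
\[
-(K_X+B_\Phi)\sim_{\Rr,Z} B-B_\Phi\ge 0,
\]
which holds over a neighborhood of $z$ because $K_X+B\sim_{\Rr,Z}0$ ($(X,B)$ being its own $\Rr$-complement) and $B_\Phi\le B$ by definition of $B_\Phi$. Writing $D:=B-B_\Phi$, a good minimal model of $-(K_X+B_\Phi)$ over $Z$ near $z$ is the same as one for $D$, so I would run a $D$-MMP and then upgrade nefness to semiampleness by abundance.

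To phrase this as a standard $(K+\Delta)$-MMP, note that any component of $B$ with coefficient $1$ has $B_\Phi$-coefficient $1$ as well (since $0\in\fR$ forces $1\in\Phi$), hence contributes $0$ to $D$; the remaining finitely many components have $B$-coefficients strictly less than $1$, so I can choose $\epsilon>0$ small enough that $(X,B+\epsilon D)$ is still lc (respectively still dlt, in the threefold case). Then $K_X+B+\epsilon D\sim_{\Rr,Z}\epsilon D\ge 0$ is pseudoeffective, and the $(K_X+B+\epsilon D)$-MMP over $Z$ near $z$ coincides with the $D$-MMP.

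In the surface case, termination on $\Qq$-factorial surfaces is classical, pseudoeffectivity excludes a Mori fiber space, and the MMP ends at a $\Qq$-factorial minimal model $X'$ on which $K_{X'}+B'+\epsilon D'$ is nef over $Z$; abundance for lc surface pairs in the relative setting then yields semiampleness. In the threefold case, after a small $\Qq$-factorialization if necessary, the MMP for $\Qq$-factorial dlt threefold pairs with pseudoeffective log canonical class terminates at a minimal model $X'$, and abundance for dlt threefolds (Keel--Matsuki--McKernan, with the dlt/lc extensions of Fujino and Gongyo) delivers semiampleness of $K_{X'}+B'+\epsilon D'$ over $Z$ near $z$.

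Since $\Rr$-linear equivalence is preserved under the MMP, $K_{X'}+B'\sim_{\Rr,Z}0$ persists, hence
\[
K_{X'}+B'+\epsilon D'\;\sim_{\Rr,Z}\;\epsilon D'\;\sim_{\Rr,Z}\;-\epsilon(K_{X'}+B_\Phi').
\]
Semiampleness of the leftmost term therefore passes to semiampleness of $-(K_{X'}+B_\Phi')$ over $Z$ near $z$ after dividing by $\epsilon$. Because the birational contraction $X\dashto X'$ is $D$-non-positive (equivalently $-(K_X+B_\Phi)$-non-positive) and $X'$ is $\Qq$-factorial, $X'$ is the desired good minimal model. The main obstacle is invoking the termination-plus-abundance package in dimension $3$ for $\Qq$-factorial dlt pairs with pseudoeffective log canonical class; the dimension $2$ analogues and the bookkeeping around the identity $-(K_X+B_\Phi)\sim_{\Rr,Z}B-B_\Phi$ are routine.
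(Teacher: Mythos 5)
Your argument is essentially the same as the paper's. Both rewrite $-(K_X+B_\Phi)$, up to positive scaling and relative $\Rr$-linear equivalence, as $K_X+\Delta$ with $\Delta=B+\epsilon(B-B_\Phi)$ for $0<\epsilon\ll1$, then run the $(K_X+\Delta)$-MMP over $Z$ and invoke termination plus abundance in dimensions two and three; the paper packages the construction of $\Delta$ via \cite[Lemma 4.2]{CH21} and cites \cite{KMM85,Fuj12,Tan18}.

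The one imprecision worth flagging is your claim that $(X,B+\epsilon D)$ is lc in the surface case: the coefficient argument only guarantees coefficients in $[0,1]$, and lc-ness can genuinely fail. For instance, on $X=\Pp^2$ with $B=L_1+L_4+\tfrac{2}{5}L_2+\tfrac{3}{5}L_3$ (with $L_1,L_2,L_3$ concurrent at a point $p$, $L_4$ general) and $\Phi=\Phi(\{0,1\})$, one has $(X,B)$ lc and $K_X+B\sim_\Rr 0$, yet the log discrepancy of the blowup at $p$ with respect to $(X,B+\epsilon(B-B_\Phi))$ equals $-\epsilon/2<0$. This does not break your proof: \cite[Lemma 4.2]{CH21} in the surface case only produces $\Delta$ with coefficients at most $1$, and the surface MMP plus abundance of \cite{Fuj12,Tan18} apply to $\Qq$-factorial log surfaces without any lc hypothesis. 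So your ``abundance for lc surface pairs'' should read ``abundance for $\Qq$-factorial log surfaces.'' Your dlt assertion in dimension three is correct, since $D$ is disjoint from $\lfloor B\rfloor$ and dlt-ness is witnessed on a single log resolution, where only finitely many exceptional log discrepancies must remain positive under a small perturbation.
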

\begin{proof}
According to \cite[Lemma 4.2]{CH21}, possibly shrinking $Z$ near $z$, there exist a positive real number $u$ and a surface (respectively threefold) pair $(X,\Delta)$, such that the coefficients of $\Delta$ are at most 1 (respectively $(X,\Delta)$ is dlt) and $-u(K_X+B_\Phi)\sim_{\Rr,Z} K_X+\Delta$. In both cases, we can run an MMP on $K_X+\Delta$ over $Z$ and reach a good minimal model $X'$ over $Z$ by \cite{KMM85,Fuj12,Tan18}. It is clear that $X'$ is a good minimal model of $-(K_X+B_\Phi)$ as $-u(K_X+B_\Phi)\sim_{\Rr,Z} K_X+\Delta$. This finishes the proof.
\end{proof}

\subsection{Boundedness of complements}
We propose a conjecture on the boundedness of complements and collect some useful results.

For a positive integer $l$ and a non-empty set $\cN\subseteq\Zz_{>0}$, we say $\cN$ is divisible by $l$, denoted by $l\mid \cN$, if $l\mid n$ for any $n\in\cN$.

\begin{conj}\label{conj:bddcpt2}
Let $d,l$ be two positive integers and $\Phi\subseteq[0,1]\cap\Qq$ a hyperstandard set. Then there exists a finite set of positive integers $\cN$ divisible by $l$ depending only on $d,l$ and $\Phi$ satisfying the following.

Assume that $(X/Z\ni z,B)$ is a pair of dimension $d$ such that $(X/Z\ni z,B_{\cN\_\Phi})$ has an $\Rr$-complement which is klt over a neighborhood of $z$. Then $(X/Z\ni z,B)$ is $\cN$-complementary.
\end{conj}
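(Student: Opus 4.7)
The plan is to proceed by induction on the dimension $d$, with base cases $d=1$ (sdlt curves, Section \ref{sec4}) in hand and $d=2$ supplied by Theorem \ref{thm:surfcmpt}. The strategy generalizes the sketches for Theorems \ref{thm:surfcmpt} and \ref{thm:3foldsbddcmpt}: build an ascending chain of hyperstandard refinements
\[
\Phi = \Phi_0 \subseteq \Phi_1 \subseteq \cdots \subseteq \Phi_d, \qquad \Phi_i := \Gamma\Bigl(\bigcup_{j \le i}\cN_j,\,\Phi\Bigr),
\]
and construct the finite sets $\cN_1,\dots,\cN_d$ one at a time so that each $\cN_i$ handles the stratum in which the Iitaka dimension $\kappa(X/Z, B - B_{\Phi_{i-1}})$ stabilizes at the value $d-i+1$ upon passing to $\Phi_i$. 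The rounding lemmas (Lemmas \ref{lem:ncomplneq} and \ref{lem:N_Phicompl}) ensure that an $n$-complement constructed for the truncated boundary $B_{\cN\_\Phi}$ upgrades automatically to one for $B$.

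First I would reduce to the case where $z \in Z$ is closed, and by passing to a $\Qq$-factorial dlt modification of the given klt $\Rr$-complement (and invoking Lemma \ref{lemma pullbaclcomplement}), assume that anti-canonical MMPs in the spirit of Lemma \ref{lem:runantiMMP} are available for each $-(K_X + B_{\Phi_i})$. The case structure is then exactly the one laid out for dimension three. When $\kappa(X/Z, B - B_{\Phi_1}) = d$, the divisor $-(K_X+B_{\Phi_1})$ is big and forces $X$ to be of Fano type over $z$, so $\cN_1$ comes from Birkar's boundedness of complements for Fano-type pairs (Theorem \ref{thm:ftbdd} extended to dimension $d$). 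When $0 < \kappa(X/Z, B - B_{\Phi_{i-1}}) = \kappa(X/Z, B - B_{\Phi_i}) = d-i+1 < d$, run the $-(K_X+B_{\Phi_i})$-MMP to produce a good minimal model with Iitaka contraction $\pi : X' \to Z'$ of dimension $<d$, apply the canonical bundle formula (Proposition \ref{prop:cbfindex}) to descend $(X', B_{\Phi_i}')$ to a g-pair on $Z'$ whose coefficients lie in a hyperstandard set determined by $\Phi_i$ and by an Effective-Adjunction index $p$, invoke the inductive hypothesis in dimension $\dim Z' = d-i$, and lift via Proposition \ref{prop:cbflift1} to obtain $\cN_i$. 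Finally, when $\kappa(X/Z, B - B_{\Phi_d}) = 0$, a global-index argument of Lemma \ref{lem:global} type yields a single $n_0$ with $n_0(K_X+B)\sim 0$. Assembling $\cN$ from $\{n_0\} \cup \bigcup_i \cN_i$ and replacing each element by a suitable multiple to enforce $l \mid \cN$ completes the induction.

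The principal obstacle is Effective Adjunction. Lifting a complement from $Z'$ to $X$ via Proposition \ref{prop:cbflift1} requires a uniform integer $p$, depending only on $d$ and $\Phi_i$, such that $p\bM_{\pi}$ is b-semiample and
\[
p(K_X + B_{\Phi_i}) \sim p\,\pi^*(K_{Z'} + B_{Z'} + \bM_{\pi,Z'});
\]
this is precisely the Prokhorov--Shokurov Effective Adjunction conjecture, which remains open once the relative dimension of $\pi$ exceeds one. The authors circumvent this in dimension three by verifying Effective Adjunction directly in the configurations that actually arise (Proposition \ref{prop:bsa}), and any proof of the conjecture in dimension $d$ would need an analogous input. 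A secondary technical difficulty, already present in Section \ref{sec6}, is that components of $\Supp B$ may have image of codimension $\ge 2$ in $Z'$, so the lifting statement must track horizontal and vertical components separately while preserving the integrality condition $nB^+ \ge n\lfloor B\rfloor + \lfloor(n+1)\{B\}\rfloor$; extending this careful bookkeeping to arbitrary relative dimension appears feasible but combinatorially heavy, and I expect it to be the secondary bottleneck after Effective Adjunction itself.
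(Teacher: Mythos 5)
This statement is a \emph{conjecture} in the paper (Conjecture~\ref{conj:bddcpt2}); the authors do not prove it in general, only the cases $d\le 3$ (Theorems~\ref{thm:surfcmpt2} and~\ref{thm:3foldsbddcmpt2}, plus the $d=1$ input from Theorem~\ref{thm:curcmpt}). Your proposal is therefore not a proof but a roadmap for how an inductive argument \emph{might} go, and you say as much: you correctly identify that the reduction-by-Iitaka-dimension scheme of Sections~\ref{sec5} and~\ref{sec6} generalizes formally to dimension $d$, but that the induction stalls at the canonical bundle formula step because Effective Adjunction (b-semiampleness of the moduli part with uniform denominator) is open once the relative dimension exceeds one. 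That is exactly the obstruction the authors face; Proposition~\ref{prop:bsa} handles it for threefolds over a curve by special arguments (boundedness via $\epsilon$-lc, Betti number control, Floris's theorem), and nothing analogous is available in higher relative dimension.

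Two further points worth flagging. First, you treat Theorem~\ref{thm:ftbdd} as already available in dimension $d$, which is correct --- Shokurov's boundedness of complements for Fano type pairs is stated in all dimensions --- so the $\kappa=d$ stratum is genuinely unconditional. Second, the $\kappa=0$ stratum is not as cheap as ``a global-index argument of Lemma~\ref{lem:global} type'': that lemma in the paper rests on effective nonvanishing/boundedness of the lc index for Calabi--Yau pairs in dimension $\le 3$, and the higher-dimensional analogue is itself nontrivial (though in better shape than Effective Adjunction). So even granting Effective Adjunction, one must also settle the boundedness of the index for klt log Calabi--Yau pairs with DCC coefficients in dimension $d$ before the induction closes. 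Your sketch is an accurate account of what a proof would require, but no proof currently exists, and the statement is correctly left as a conjecture in the paper.
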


\begin{rem}
\begin{enumerate}
  \item In Conjecture \ref{conj:bddcpt2}, we do not assume $(X/Z\ni z,B)$ is lc. 
  \item One can not remove the klt assumption in Conjecture \ref{conj:bddcpt2} when $d\ge3$; see \cite[Example 11]{Sho20}. However, we will show Conjecture \ref{conj:bddcpt2} for $\Rr$-complementary surface pairs without the klt assumption; see Theorem \ref{thm:surfcmpt}.
\end{enumerate}
\end{rem}

\begin{lem}\label{lem:global}
Let $\Phi\subseteq[0,1]\cap\Qq$ be a hyperstandard set. Then there exists a positive integer $n$ depending only on $\Phi$ satisfying the following.

Assume that $(X,B)$ is a projective $\Qq$-factorial dlt pair of dimension $\le3$ such that $K_X+B\sim_\Rr0$ and $\kappa(X,B-B_\Phi)=0$. Then $n(K_X+B)\sim0.$
\end{lem}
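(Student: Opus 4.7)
The plan is to use Lemma~\ref{lem:runantiMMP} to reduce, via a $-(K_X+B_\Phi)$-MMP, to a birational model on which the boundary has coefficients in $\Phi$, and then invoke a bounded global index theorem for lc log Calabi-Yau pairs with hyperstandard coefficients in dimension at most three. The one-dimensional case is immediate ($X\cong\Pp^1$ with $B=B_\Phi$ forced by $\kappa=0$, or $X$ an elliptic curve with $B=0$), so I focus on $\dim X\in\{2,3\}$. Applying Lemma~\ref{lem:runantiMMP} with $Z=\Spec\Cc$ gives a $-(K_X+B_\Phi)$-MMP $\psi\colon X\dashrightarrow X'$ to a good minimal model on which $-(K_{X'}+B'_\Phi)$ is semi-ample, where $B'$ and $B'_\Phi$ denote strict transforms. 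Since Iitaka dimension is preserved under MMP and $-(K_X+B_\Phi)\sim_\Rr B-B_\Phi$,
\[
\kappa\bigl(X',-(K_{X'}+B'_\Phi)\bigr)=\kappa(X,B-B_\Phi)=0,
\]
so $-(K_{X'}+B'_\Phi)$, being semi-ample of Iitaka dimension zero, is $\Qq$-linearly trivial; equivalently $K_{X'}+B'_\Phi\sim_\Qq 0$.

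Because $K_X+B\sim_\Rr 0$ globally, every extremal ray contracted by the MMP is $(K+B)$-trivial. For a divisorial contraction $\pi$ the identity $K_{X_i}+B_i=\pi^*(K_{X_{i+1}}+B_{i+1})+aE$ dotted with a contracted curve $C\subset E$ (for which $E\cdot C<0$) forces $a=0$, and flips are handled in parallel by the Negativity Lemma. Thus $(X,B)$ and $(X',B')$ are crepant: on a common resolution $p\colon W\to X$, $q\colon W\to X'$ one has $p^*(K_X+B)=q^*(K_{X'}+B')$. In particular $K_{X'}+B'\sim_\Rr 0$, and subtracting from $K_{X'}+B'_\Phi\sim_\Qq 0$ yields $B'-B'_\Phi\sim_\Rr 0$. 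Since $B'-B'_\Phi\ge 0$ and $X'$ is projective, intersecting with an ample divisor forces $B'=B'_\Phi$; in particular $B'$ has coefficients in $\Phi$.

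I would then invoke a bounded global index theorem: for a projective lc log Calabi-Yau pair of dimension $\le 3$ with coefficients in the hyperstandard set $\Phi$, there exists a positive integer $n$, depending only on $\Phi$, with $n(K_{X'}+B'_\Phi)\sim 0$. Pulling this back along $q$ and pushing forward along $p_*$, the crepancy $p^*(K_X+B)=q^*(K_{X'}+B')$ together with $B'=B'_\Phi$ gives
\[
n(K_X+B)=p_*\bigl(np^*(K_X+B)\bigr)=p_*\bigl(nq^*(K_{X'}+B'_\Phi)\bigr)\sim 0,
\]
as desired.

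The main obstacle is the bounded global index for lc log Calabi-Yau threefolds with hyperstandard coefficients; for surfaces this is classical (Prokhorov-Shokurov), while in dimension three it relies on substantial recent minimal model program machinery. A secondary technical point is the crepancy assertion for flips, which, although standard for $(K+B)$-trivial small birational maps, needs to be argued carefully via the Negativity Lemma rather than a direct intersection computation.
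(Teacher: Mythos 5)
Your proposal is correct and follows essentially the same route as the paper: run the $-(K_X+B_\Phi)$-MMP furnished by Lemma~\ref{lem:runantiMMP} to a good minimal model, use preservation of Iitaka dimension and effectivity to conclude $B'=B'_\Phi$, then invoke a bounded global index theorem for lc log Calabi--Yau pairs of dimension $\le 3$ with hyperstandard coefficients (the paper cites \cite[Proposition 6.4, Theorem 1.1]{CH21} and \cite[Theorem 2.12]{CHL22} for exactly this), and finish by crepancy. Your extra care about crepancy of the MMP steps and the intermediate derivation $K_{X'}+B'_\Phi\sim_\Qq 0$ are just more explicit renderings of the same argument.
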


\begin{proof}
By Lemma \ref{lem:runantiMMP}, we may run an MMP on $-(K_X+B_\Phi) \sim_\Rr B - B_\Phi$ which terminates with a good minimal model $X'$. Let $B'$ be the strict transform of $B$ on $X'$. Since $\kappa(X,B-B_\Phi)=0$ and $B'-B'_\Phi$ is semi-ample, we see that $B'=B_{\Phi}'$ and therefore $K_{X'}+B_{\Phi}'\sim_\Rr0$. By \cite[Proposition 6.4, Theorem 1.1]{CH21} and \cite[Theorem 2.12]{CHL22}, there is a positive integer $n$ depending only on $\Phi$ such that
$$n(K_{X'}+B')=n(K_{X'}+B_{\Phi}')\sim0.$$
It follows that $n(K_X+B)\sim0$ as $(X,B)$ and $(X',B')$ are crepant.
\end{proof}

We will use the following results on the boundedness of complements.

\begin{thm}[cf. {\cite[Theorem 16]{Sho20}}]\label{thm:ftbdd}
Let $d,l$ be two positive integers and $\Phi\subseteq[0,1]\cap\Qq$ a hyperstandard set. Then there exists a finite set of positive integers $\cN$ divisible by $l$ depending only on $d,l$ and $\Phi$ satisfying the following.

Assume that $(X/Z\ni z,B)$ is a pair of dimension $d$ such that $X$ is of Fano type over $Z$ and $(X/Z\ni z,B_{\cN\_\Phi})$ has a klt $\Rr$-complement. Then $(X/Z\ni z,B)$ is $\cN$-complementary.
\end{thm}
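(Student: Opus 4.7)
The statement is essentially Shokurov's refinement \cite[Theorem 16]{Sho20} of Birkar's boundedness of complements \cite[Theorems 1.7, 1.10]{Bir19} for Fano-type pairs. The plan is to reduce to Birkar's theorem by means of the coefficient-shifting lemmas established earlier in this section.

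By Lemma \ref{lem:N_Phicompl}(1), to show $(X/Z\ni z, B)$ is $\cN$-complementary it is enough to show that $(X/Z\ni z, B_{\cN\_\Phi})$ is $\cN$-complementary; the hypothesis directly supplies a klt $\Rr$-complement of the latter pair. The coefficients of $B_{\cN\_\Phi}$ lie in $\Gamma(\cN, \Phi)$, which by Remark \ref{rem:phiset}(2) is itself a hyperstandard set $\Phi(\fR'')$ associated to an explicit finite set $\fR''$ built from the finite set $\fR$ defining $\Phi$ and from $\cN$. Birkar's theorem \cite[Theorem 1.10]{Bir19}, applied to $\Gamma(\cN,\Phi)$ in dimension $d$ with divisibility constraint $l$, then produces a finite set $\cN^{*}$ of positive integers divisible by $l$, depending only on $d$, $l$, and $\Gamma(\cN,\Phi)$, such that every Fano-type pair of dimension $d$ with coefficients in $\Gamma(\cN,\Phi)$ admitting a klt $\Rr$-complement is $\cN^{*}$-complementary.

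The main obstacle is the self-referential appearance of $\cN$: one must arrange $\cN^{*}\subseteq\cN$ while $\cN^{*}$ itself depends on $\cN$ through $\Gamma(\cN,\Phi)$. I would resolve this by a short iteration. Starting from $\cN_{0}:=\cN^{*}(d,l,\Phi)$, Birkar's output for $\Phi$ itself, define recursively $\cN_{k+1}:=\cN^{*}(d,l,\Gamma(\cN_{k},\Phi))$. Each $\cN_{k}$ is then determined by $d,l,\Phi$; and since Birkar's bound depends only on a controlled amount of combinatorial data (the dimension, the divisibility constraint, and the denominators appearing in the defining finite set of the hyperstandard set), the sequence stabilizes at some $\cN$ still depending only on $d,l,\Phi$, yielding the required fixed-point property $\cN^{*}(d,l,\Gamma(\cN,\Phi))\subseteq\cN$. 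With this choice the reduction in the previous paragraph goes through, and Lemma \ref{lem:ncomplneq2} (already incorporated into Lemma \ref{lem:N_Phicompl}(1)) guarantees that the resulting $n$-complement of $B_{\cN\_\Phi}$ is an $n$-complement of $B$. The proof then rests entirely on Birkar's theorem, which is the deep and highly nontrivial input.
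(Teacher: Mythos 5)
The paper does not prove Theorem \ref{thm:ftbdd}; it simply cites it from Shokurov \cite[Theorem 16]{Sho20}. So your proposal is not really being compared against an argument in the paper, but let me assess it on its own terms. Your reduction via Lemma \ref{lem:N_Phicompl}(1) to showing that $(X/Z\ni z,B_{\cN\_\Phi})$ is $\cN$-complementary, together with the observation that $B_{\cN\_\Phi}$ has coefficients in the hyperstandard set $\Gamma(\cN,\Phi)$, is the right opening move. The gap is in the iteration.

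The sequence $\cN_{k+1}:=\cN^{*}(d,l,\Gamma(\cN_{k},\Phi))$ does \emph{not} stabilize, and in fact it cannot: for any finite set $\cN$ with $N:=\max\cN$, the set $\Gamma(\cN,\Phi)$ contains $\frac{1}{N+1}\Zz\cap[0,1]$ (take $r=1$, $l=1$ in the defining formula). Consider $X=\Pp^{1}$, $B=\frac{N}{N+1}(P_{1}+P_{2})+\frac{2}{N+1}P_{3}$. This is a klt log Calabi--Yau pair with $B\in\Gamma(\cN,\Phi)$ and $K_{X}+B\sim_{\Qq}0$ of index exactly $N+1$. Birkar's theorem applied to $\Gamma(\cN,\Phi)$ produces a monotonic complement, and for a log Calabi--Yau pair the only monotonic complement of itself is itself; so the output $\cN^{*}(d,l,\Gamma(\cN,\Phi))$ must contain a multiple of $N+1$, hence an element strictly larger than $\max\cN$. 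Therefore $\cN^{*}(d,l,\Gamma(\cN,\Phi))\not\subseteq\cN$ for every finite $\cN$, and no fixed point exists for your iteration. The claim that Birkar's bound "depends only on a controlled amount of combinatorial data" and that "the sequence stabilizes" is therefore unfounded: each iteration strictly enlarges the denominators in the generating set of the hyperstandard set, and the complement index grows with them.

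What makes the theorem true is that the $n$-complement condition $nB^{+}\ge n\lf B\rf+\lf(n+1)\{B\}\rf$ does \emph{not} force $B^{+}\ge B$: the floor operation $\lf(n+1)\cdot\rf$ collapses the $\Gamma(\cN,\Phi)$-part of the coefficients back onto the $\Phi$-grid for $n\in\cN$ (this is precisely the content of Lemmas \ref{lem:ncomplneq} and \ref{lem:ncomplneq2}). In the example above, for small $n'\in\cN$ the pair $B^{+}=P_{1}+P_{2}$ is a non-monotonic $n'$-complement. Exploiting this systematically is not a black-box application of Birkar's theorem to the enlarged hyperstandard set: one has to re-run the entire inductive machinery (adjunction, canonical bundle formula, the bounded/exceptional base cases) for $\Gamma(\cN,\Phi)$-coefficients and verify at each step that the complement indices needed are controlled by $\Phi$ alone, with the $\frac{m_{n}}{n+1}$ excess absorbed by the floors. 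That re-examination is exactly what \cite[Theorem 16]{Sho20} carries out, and it cannot be replaced by a fixed-point argument.
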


\begin{thm}[{\cite[Theorem 1.3]{CH21}}]\label{thm:curcmpt}
Let $l$ be a positive integer. Then there exists a finite set of positive integers $\cN$ divisible by $l$ depending on $l$ satisfying the following.
If $(X/Z\ni z,B)$ is an $\Rr$-complementary curve pair, then $(X/Z\ni z,B)$ is $\cN$-complementary.
\end{thm}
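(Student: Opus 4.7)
The plan is to reduce to the case $X=\Pp^1$ over a point and there solve a Diophantine approximation problem to produce a rational complement with denominator in a prescribed finite set.

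First I would localize on $Z$ around the closed point $z$ and pass to connected components of $X$. If the morphism $X\to Z$ is finite near $z$, then $X\cong Z$ near $z$, $K_X$ is Cartier there, and the $\Rr$-complement condition forces the effective divisor $B^{+}$ (and hence $B$) to vanish near $z$; any $n\in l\Zz_{>0}$ then produces a trivial complement. So we may assume $Z=z$ is a point and $X$ is a connected projective curve. Stratifying by genus $g=g(X)$: if $g\ge 2$ then $\deg K_X>0$ forbids the existence of any effective $\Rr$-complement; if $g=1$ then $K_X\sim 0$ and any $\Rr$-complement $B^{+}\ge 0$ has degree $0$, forcing $B^{+}=0$ and $B\le 0$, so $n=l$ suffices with trivial complement $B^{+}=0$. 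The real content is the case $X\cong\Pp^{1}$.

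For $X=\Pp^1$, an $\Rr$-complement $B^{+}=\sum b_i^{+} P_i$ satisfies $\sum b_i^{+}=2$ with $b_i^{+}\in[0,1]$ (lc is automatic from $b_i^+\le 1$). I would fix the hyperstandard set $\Phi=\Phi(\{0,1\})$ and seek a finite set $\cN$ divisible by $l$ such that every such pair admits an $n$-complement for some $n\in\cN$. By Lemma~\ref{lem:N_Phicompl}(1), it is enough to produce an $\cN$-complement for $(X/Z\ni z,B_{\cN\_\Phi})$, whose coefficients now lie in the DCC hyperstandard set $\Gamma(\cN,\Phi)$. In this DCC setting, global ACC for log Calabi--Yau pairs on curves combined with the degree-two constraint pins down a finite list of admissible coefficient profiles for an $\Rr$-complement: for any $\epsilon>0$ the number of points with coefficient $\ge\epsilon$ is at most $2/\epsilon$, and the DCC restriction on those coefficients forces them to take only finitely many values. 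Choosing $\cN$ to contain a common multiple (scaled by $l$) of all denominators appearing in these finitely many profiles, I would then verify, configuration by configuration, that one can select rational $b_i^{+}\in\tfrac{1}{n}\Zz\cap[0,1]$ with $\sum b_i^{+}=2$ satisfying the inequality $nb_i^{+}\ge n\lfloor b_i\rfloor+\lfloor(n+1)\{b_i\}\rfloor$ at each point, for some $n\in\cN$; Lemma~\ref{lem:ncomplneq} (applied with the chosen $\cN$ and $\Phi$) ensures this inequality reduces to the one for $B_{\cN\_\Phi}$ and hence can be checked on the finite list of profiles.

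The main obstacle is the circular dependence between $\cN$ and the coefficient set $\Gamma(\cN,\Phi)$: enlarging $\cN$ potentially enlarges $\Gamma(\cN,\Phi)$ and thereby produces new profiles requiring still larger denominators. This is resolved by a finite bootstrap: start with $\cN_{0}=\{l\}$, apply global ACC on $\Pp^{1}$ to enumerate the finitely many profiles that arise from the DCC set $\Gamma(\cN_{0},\Phi)$, and enlarge $\cN_{0}$ to $\cN_{1}$ so as to contain all denominators of those profiles. Remark~\ref{rem:phiset}(2)--(3) shows that $\Gamma(\cN_{1},\Phi)$ is again a hyperstandard set associated to a finite $\fR_{1}\subseteq[0,1]\cap\Qq$; iterating, the descending nature of the hyperstandard approximation and global ACC force the process to stabilize after finitely many steps, at which point $\cN$ has the required property.
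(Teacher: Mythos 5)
Your reduction to a single $\Pp^{1}$ over a point is essentially sound (though the justification for the ``$X\to Z$ finite'' case is slightly off: it is not that $B^{+}$ is forced to vanish, but that over a DVR every integral divisor is linearly trivial, so any $n$ produces a complement by rounding). The real problem lies in the core of the argument.

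After replacing $B$ by $B_{\cN\_\Phi}$, you claim that global ACC together with the ``degree-two constraint'' pins down finitely many coefficient profiles for an $\Rr$-complement. This is not correct. The coefficients of the given $\Rr$-complement $B^{+}$ lie in $[0,1]$ with $\sum b_i^{+}=2$, but they are \emph{not} confined to the DCC set $\Gamma(\cN,\Phi)$ --- the boundary $B_{\cN\_\Phi}$ satisfies $\deg B_{\cN\_\Phi}\le 2$, typically strictly, and rounding $B^{+}$ down to $\Gamma(\cN,\Phi)$ destroys the equality $\deg=2$. So neither object is a log Calabi--Yau pair with DCC coefficients, and global ACC does not apply. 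Concretely, for any $\cN$, the profile $B_{\cN\_\Phi}=\tfrac12 P_1+\bigl(1-\tfrac1m\bigr)P_2$ for $m\to\infty$ lies in $\Gamma(\cN,\Phi)$, has degree $\le 2$, and admits an $\Rr$-complement with $\sum b_i^{+}=2$; there are infinitely many such profiles, so ``pin down a finite list'' already fails at the first stage of your bootstrap. Consequently the bootstrap itself has no reason to terminate: each $\cN_{j}$ enlarges $\Gamma(\cN_{j},\Phi)$, which enlarges the set of possible $B_{\cN_{j}\_\Phi}$, and no descending chain in $[0,1]$ controls the iteration. What the known proof (in \cite{CH21}) does instead is purely Diophantine: after reducing to $X=\Pp^{1}$ over a point, the problem becomes the arithmetic statement that for a fixed finite $\cN$ (built from primes $q$ with $l\mid q-1$ via Dirichlet) one can always achieve $\sum_{i}\rddown{(n+1)b_i}\le 2n$ for some $n\in\cN$, given only $b_i\in[0,1]$ and $\sum b_i\le 2$. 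This is exactly the flavor of Lemma~\ref{lem: bdd of complement for 2 curves} in this paper (which handles the analogous constraint $\sum a_i\le 1$ on each of two chains by a pigeonhole argument over three Dirichlet primes), and no DCC/ACC hypothesis enters. You would need to replace the ACC-plus-bootstrap step with such a direct Diophantine lemma for the degree-$2$ case.
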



\section{Canonical bundle formulas}\label{sec3}
\subsection{Canonical bundle formulas}
For the definition and basic properties of the canonical bundle formula, we refer the reader to \cite{Bir19,Fil20,HanLiu20,JLX22}. Briefly speaking, suppose that $(X/Z,B)$ is a sub-pair and $\phi:X\to T$ is a contraction over $Z$, such that $(X,B)$ is lc over the generic point of $T$ and $K_X+B\sim_{\Rr,T}0$. Then there exist a uniquely determined $\Rr$-divisor $B_T$ and a nef over $Z$ b-$\Rr$-divisor $\bM_\phi$ which is determined only up to $\Rr$-linear equivalence, such that $(T/Z,B_T+\bM_\phi)$ is a g-sub-pair and
$$K_X+B\sim_\Rr\phi^*(K_T+B_T+\bM_{\phi,T}).$$
Here $B$ (respectively $\bM_\phi$) is called the \emph{discriminant part} (respectively a \emph{moduli part}) of the canonical bundle formula for $(X/Z,B)$ over $T$. Moreover, if $(X/Z,B)$ is an lc (respectively klt) pair, then $(T/Z,B_T+\bM_\phi)$ is a glc (respectively gklt) g-pair.

It is worthwhile to point out that $\bM_\phi$ only depends on $(X,B)$ over the generic point of $T$ (cf. \cite[3.4(2)]{Bir19}), and there are many choices of $\bM_\phi$, some of which could behave badly. But we can always choose one with the required properties, e.g. Propositions \ref{prop:cbfindex} and \ref{prop:bsa}.
\begin{lem}\label{lem:cbfcptoverbase}
Notation as above.
\begin{enumerate}
  \item Assume that $(X,B)$ is a klt pair. Then there exists a crepant model $(\tilde{T},B_{\tilde{T}}+\bM_{\phi}) \to (T,B_T+\bM_\phi)$ such that for any prime divisor $P\subseteq\Supp B$ which is vertical over $T$, the image of $P$ on $\tilde{T}$ is a prime divisor.
  \item Suppose that there is an $\Rr$-divisor $G_T$ on $T$ such that $(T,B_T+G_T+\bM_\phi)$ is a sub-glc g-sub-pair. If we let $G:=\phi^*G_T$, then $(X,B+G)$ is sub-lc.
\end{enumerate}
\end{lem}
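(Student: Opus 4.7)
This is the standard ``sub-lc preservation'' property of the canonical bundle formula and rests on the characterization of the discriminant part via log canonical thresholds. For each prime divisor $P\subseteq T$, by construction $\mult_P B_T=1-t_P$, where $t_P$ is the lc threshold of $\phi^{*}P$ with respect to $(X,B)$ over the generic point of $P$. The sub-glc hypothesis on $(T,B_T+G_T+\bM_\phi)$ gives $\mult_P(B_T+G_T)\le 1$, hence $\mult_P G_T\le t_P$, and so $(X,B+\mult_P(G_T)\phi^{*}P)$ is sub-lc over the generic point of $P$ by the defining property of $t_P$. Since $G=\phi^{*}G_T$, summing over the prime components of $G_T$ yields that $(X,B+G)$ is sub-lc over the generic points of all primes of $T$. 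To extend this to arbitrary divisors over $X$, one takes a crepant log resolution where $\phi$ lifts to a morphism of smooth models, and uses the pullback identity $K_X+B+G\sim_\Rr\phi^{*}(K_T+B_T+G_T+\bM_{\phi,T})$ together with the sub-glc control of log discrepancies on $T$ to bound the log discrepancies of $(X,B+G)$ along every divisor over $X$.

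\textbf{Approach to part (1).} Any birational modification $\tilde T\to T$ automatically produces a crepant model of $(T,B_T+\bM_\phi)$: one simply defines the discriminant part $B_{\tilde T}$ and the trace $\bM_{\phi,\tilde T}$ via the canonical bundle formula applied on $\tilde T$, so that $\pi^{*}(K_T+B_T+\bM_{\phi,T})=K_{\tilde T}+B_{\tilde T}+\bM_{\phi,\tilde T}$ for $\pi\colon\tilde T\to T$. The real task is to choose $\tilde T$ so that each vertical prime divisor $P\subseteq\Supp B$ has divisorial image on $\tilde T$. Let $P_1,\ldots,P_k$ be the finitely many vertical primes in $\Supp B$. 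For each $i$, the divisorial valuation $\operatorname{ord}_{P_i}$ on $k(X)$ restricts via $\phi^{*}\colon k(T)\hookrightarrow k(X)$ to a nontrivial discrete valuation $v_i$ on $k(T)$; since $\operatorname{ord}_{P_i}$ is Abhyankar of rank one on $k(X)$, Abhyankar's inequality forces $v_i$ to be Abhyankar of rank one on $k(T)$, hence divisorial. Take $\tilde T\to T$ to be a smooth birational model on which each $v_i$ has codimension-one center, obtained by iteratively blowing up the images $\phi(P_i)\subseteq T$ and resolving singularities. Then on any common resolution $W$ dominating both $X$ and $\tilde T$, the strict transform of $P_i$ on $W$ maps onto the prime divisor $E_i\subseteq\tilde T$ realizing $v_i$, which is the desired image of $P_i$ on $\tilde T$.

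\textbf{Main obstacle.} The technical heart of part (1) is the valuation-theoretic statement that the restriction of a rank-one divisorial valuation on $k(X)$ to the subfield $k(T)$ remains divisorial on $k(T)$; this uses Abhyankar's inequality and the characterization of rank-one Abhyankar valuations as divisorial. An alternative route would be Raynaud--Gruson flattening, after which vertical divisors automatically map to divisors on the flattened base. Part (2) is the more routine of the two, reducing to the lc-threshold description of $B_T$ and the standard CBF compatibility of log discrepancies under crepant log resolutions.
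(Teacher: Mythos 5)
Your outline tracks the paper's argument: compute $\mult_P B_T$ via lc thresholds for primes $P\subseteq T$, then pass to a resolution and use CBF compatibility. But the final step is left vague in a way that hides the key move. ``Take a crepant log resolution where $\phi$ lifts'' does not, by itself, ensure that the divisor you are trying to control maps onto a \emph{divisor} of the base model — a general log resolution of $X$ has no reason to take a given exceptional prime $P''$ to a divisor of a chosen resolution of $T$. The paper argues by contradiction: it takes a hypothetical non-sub-lc place $P''$ of $(X,B+G)$, observes that its center must lie in $\Supp G$ (which is vertical over $T$), and then invokes \cite[VI, Theorem 1.3]{Kol96} to produce \emph{specific} models $X''\to T''$ in which $P''$ is a divisor with divisorial image $Q''$; only then does \cite[Lemma 7.4(ii)]{PS09} give $\mult_{Q''}\Delta_{T''}\le 1$ and the contradiction $\mult_{P''}\Delta''\le 1$. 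You should make the choice of models and the role of $\Supp G$ explicit; as written, ``sub-glc control of log discrepancies on $T$'' does not control the log discrepancy of a divisor over $X$ whose image in $T$ has high codimension.

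\textbf{Part (1).} Your valuation-theoretic argument that $v_i := \operatorname{ord}_{P_i}|_{k(T)}$ is divisorial is correct and is a genuinely different route from the paper's. (The paper instead cites weak semistable reduction / toroidal equidimensionalization — \cite{Hu20}, \cite{AK00} — to produce an equidimensional model $X'\to T'$, after which vertical divisors automatically have divisorial images. Your alternative via the relative Abhyankar inequality is cleaner in the sense that it requires no resolution input, and your Raynaud--Gruson remark is essentially the paper's approach.)

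However, there is a genuine gap: knowing that each $v_i$ has a codimension-one center on \emph{some} birational model $\tilde T\to T$ is not enough. The lemma asks for $\tilde T$ to carry a \emph{crepant model} $(\tilde T, B_{\tilde T}+\bM_\phi)$ of the g-pair $(T, B_T+\bM_\phi)$; by the paper's conventions a g-pair has effective boundary, so one must ensure $B_{\tilde T}\ge 0$, i.e.\ that every exceptional prime of $\tilde T\to T$ has log discrepancy $\le 1$ with respect to $(T, B_T+\bM_\phi)$. Your construction (``iteratively blow up $\phi(P_i)$ and resolve'') gives no control over which divisors get extracted, and in general will introduce exceptional divisors with $a>1$, making the pulled-back discriminant negative. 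The paper handles this by first noting, via the CBF, that the relevant centers $Q_i$ satisfy $a(Q_i,T,B_T+\bM_\phi)<1$ (this is where $(X,B)$ being klt — so that $a(P_i,X,B)<1$ for $P_i\subseteq\Supp B$ — is used), and that $(T,B_T+\bM_\phi)$ is gklt, and then invoking \cite[Lemma 4.6]{BZ16} to produce a crepant birational model that extracts \emph{exactly} the divisors $Q_i$. Without this discrepancy bound and the controlled extraction, your $\tilde T$ need not be a crepant model in the required sense. Note that your Abhyankar argument does identify the correct divisorial valuations $Q_i$; to close the gap you would only need to add the observation $a(Q_i,T,B_T+\bM_\phi)<1$ and then apply the extraction lemma rather than an uncontrolled blowup.
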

\begin{proof}
(1) According to \cite[Theorem B.6]{Hu20} (cf. \cite[Theorem 0.3]{AK00}, \cite[Theorem 2]{Kaw15}, and \cite[Theorem 2.8]{HL21}), there exist birational morphisms $X'\to X$ and $T'\to T$ such that $X'\to T'$ is an equidimensional contraction. In particular, for any prime divisor $P\subseteq\Supp B$ which is vertical over $T$, the image $Q$ of $P$ on $T'$ is a prime divisor. Moreover, by the canonical bundle formula, $a(Q,T,B_T+\bM_\phi)<1$ as $a(P,X,B)<1$. Since $(X,B)$ is a klt pair, $(T,B_T+\bM_\phi)$ is a gklt g-pair. So (1) holds by \cite[Lemma 4.6]{BZ16}.

(2) Suppose on the contrary that $(X,B+G)$ is not sub-lc. Let $P''$ be a non-sub-lc place of $(X,B+G)$, i.e., $a(P'',X,B+G)<0$. It is clear that $\Center_X(P'')\subseteq \Supp G$ which is vertical over $T$. We can find birational morphisms $f:X''\to X$ and $g:T''\to T$ such that $X''\to T''$ is a contraction, $P''$ is a prime divisor on $X''$, and the image $Q''$ of $P''$ on $T''$ is a prime divisor (cf. \cite[VI, Theorem 1.3]{Kol96}). We may write $K_{X''}+\Delta'':=f^*(K_X+B+G)\text{ and }K_{T''}+\Delta_{T''}+\bM_{\phi,T''}:=g^*(K_T+B_T+G_T+\bM_{\phi,T})$ for some $\Rr$-divisors $\Delta''$ and $\Delta_{T''}$. Then $\Delta_{T''}$ is the discriminant part of the canonical bundle formula for $(X''/Z,\Delta'')$ over $T''$; see \cite[Lemma 7.4(ii)]{PS09}. Since $(T,B_T+G_T+\bM_\phi)$ is sub-glc, $\mult_{Q''}\Delta_{T''}\le1$. By the definition of the canonical bundle formula, $(X'',\Delta'')$ is sub-lc over the generic point of $Q''$. In particular, $\mult_{P''}\Delta''=1-a(P'',X,B+G)\le1$, a contradiction.
\end{proof}

\begin{lem}\label{lem:cbfbirinv}
Let $p$ be a positive integer, $(X,B)$ and $(X,B')$ two lc pairs, and $\phi:X\to T$ a contraction, such that $B'\ge B$, $K_X+B\sim_{\Rr,T}0$, and $K_X+B'\sim_{\Rr,T}0$. Let $B_T$ $($respectively $B_T')$ and $\bM_{\phi}$ be the discriminant part and a moduli part of the canonical bundle formula for $(X,B)$ $($respectively $(X,B'))$ over $T$. If $p(K_X+B)\sim p\phi^*(K_T+B_T+\bM_{\phi,T})$, then $p(K_X+B')\sim p\phi^*(K_T+B_T'+\bM_{\phi,T})$.
\end{lem}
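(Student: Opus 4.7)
The goal reduces, after subtracting the hypothesis from the desired conclusion, to the identity
$$p(B'-B) \sim p\phi^*(B_T'-B_T).$$
I plan to establish the stronger divisorial equality $B'-B = \phi^*(B_T'-B_T)$; once this is in hand, the conclusion follows by adding $p$ times this equality to the hypothesis $p(K_X+B) \sim p\phi^*(K_T+B_T+\bM_{\phi,T})$.

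First I would show $B'-B$ is vertical over $T$. Subtracting $K_X+B \sim_{\Rr,T} 0$ from $K_X+B' \sim_{\Rr,T} 0$ gives $B'-B \sim_{\Rr,T} 0$; restricting to the proper generic fibre $X_\eta$ exhibits $(B'-B)|_{X_\eta}$ as an effective $\Rr$-divisor $\Rr$-linearly equivalent to zero, hence zero. In particular, $(X,B)$ and $(X,B')$ coincide on the generic fibre, so a single moduli part $\bM_\phi$ can indeed be chosen simultaneously for both pairs, as the statement tacitly assumes (the moduli part depending only on the generic-fibre data, as recalled just before the lemma).

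Next I would identify the non-$\phi$-exceptional part of $B'-B$ with $\phi^*(B_T'-B_T)$. For each prime divisor $D$ on $T$, write $\phi^*D = \sum_i m_i D_i$ for the vertical prime divisors $D_i$ dominating $D$, and set $c_i := \mult_{D_i}(B'-B) \ge 0$. Localizing at the generic point $\eta_D$, any vertical $\Rr$-divisor on $X$ supported on $\cup_i D_i$ and $\Rr$-linearly equivalent to zero over $T$ must be a scalar multiple of $\phi^*D = \sum_i m_i D_i$, because a rational function on $X$ whose divisor is vertical restricts to a unit on the proper generic fibre, which (using $\phi_*\Oo_X = \Oo_T$) forces it to be pulled back from $T$. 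This forces $c_i = c_D m_i$ for a common scalar $c_D \in \Rr_{\ge 0}$. A direct computation via log canonical thresholds --- namely $\mult_D B_T = 1 - \min_i (1-\mult_{D_i}B)/m_i$ and its analogue for $B'$ --- then gives $c_D = \mult_D(B_T'-B_T)$. Assembling over all $D$, the components of $B'-B$ mapping to divisors on $T$ are exactly $\phi^*(B_T'-B_T)$.

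Finally, set $E := (B'-B) - \phi^*(B_T'-B_T)$. By the previous step, $E$ is an effective, vertical, $\phi$-exceptional $\Rr$-divisor on $X$; moreover, subtracting the two canonical bundle formulas (with the common moduli part) yields $E \sim_\Rr 0$, so both $E$ and $-E$ are $\phi$-nef. The negativity lemma for vertical $\phi$-exceptional divisors under a contraction then forces $E = 0$, proving $B'-B = \phi^*(B_T'-B_T)$. The main obstacle is precisely this last step --- invoking the relative negativity lemma for vertical $\phi$-exceptional divisors under a contraction rather than merely a birational morphism --- while the discriminant calculation in the middle step is essentially routine.
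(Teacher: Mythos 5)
Your conclusion is correct, and your final algebraic step --- reduce to the divisorial identity $B'-B = \phi^*(B_T'-B_T)$ and then add it to the hypothesis --- is exactly where the paper ends up as well. But the route to that identity is genuinely different. The paper disposes of it in two lines: since $B'-B\ge0$ and $B'-B\sim_{\Rr,T}0$, it invokes \cite[Lemma 2.4]{CHL22} to write $B'-B=\phi^*H_T$ for some $\Rr$-Cartier $H_T$, and then \cite[Lemma 7.4(ii)]{PS09} to identify $H_T=B_T'-B_T$. You instead re-derive the content of these two references by hand, decomposing $B'-B$ into the part dominating divisors of $T$ (your step 3) and the very exceptional part (your step 4), which makes the proof more self-contained but also longer and more delicate. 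Three points deserve care in your write-up.

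First, your argument that a vertical $\Rr$-divisor $\sim_{\Rr,T}0$ must be a pullback is phrased as though the witnessing rational functions automatically have vertical divisors. For $\Rr$-linear equivalence one writes $B'-B=\sum_j a_j(\alpha_j)+\phi^*L$, and a priori each $(\alpha_j)$ can have horizontal components that cancel in the sum. To run your argument one should first choose the $a_j$ to be $\Qq$-linearly independent; then vanishing of horizontal multiplicities of the left-hand side forces each $(\alpha_j)$ to be vertical, after which your ``unit on the generic fibre, hence pulled back from $T$'' observation applies. This is a standard but necessary normalization.

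Second, your lc-threshold formula $\mult_D B_T = 1 - \min_i(1-\mult_{D_i}B)/m_i$ is not correct in general: the threshold is computed over all divisors over $X$ lying above $\eta_D$, not only the components $D_i$ of $\phi^*D$ on $X$. Fortunately, the conclusion you draw from it survives by a cleaner argument: once you know $B'-B=c_D\phi^*D$ near $\eta_D$, adding $c_D\phi^*D$ to $B$ shifts the lc threshold of $\phi^*D$ by exactly $c_D$, so $\mult_D B_T' = \mult_D B_T + c_D$ with no need for an explicit formula.

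Third, in step 4 you assert that $E=(B'-B)-\phi^*(B_T'-B_T)$ is effective ``by the previous step,'' but step 3 only shows $E$ has no components dominating a divisor of $T$, i.e.\ it is very exceptional; it does not control the sign of the components over codimension-$\ge 2$ points. This is harmless for your argument because $E\sim_\Rr0$, so the relative negativity lemma for very exceptional divisors can be applied to both $E$ and $-E$ to conclude $E=0$; you should simply not claim effectivity. (Alternatively, the rational-function argument in the corrected form of step 3 already proves globally that $B'-B$ is a pullback, making step 4 redundant --- that is essentially what \cite[Lemma 2.4]{CHL22} packages.)

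With these repairs your proof is sound; the trade-off versus the paper is self-containedness at the price of re-proving two black-boxed lemmas.
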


\begin{proof}
Since $B'-B\ge0$ and $B'-B\sim_{\Rr,T}0$, $B'-B=\phi^*H_T$ for some $\Rr$-Cartier $\Rr$-divisor $H_T$ on $T$ by \cite[Lemma 2.4]{CHL22}. Then $B'_T=B_T+H_T$ by \cite[Lemma 7.4(ii)]{PS09}. Therefore
\begin{align*}
p(K_X+B')&=p(K_X+B)+p(B'-B)\sim p\phi^*(K_T+B_T+\bM_{\phi,T})+p\phi^*H_T\\&
=p\phi^*(K_T+B_T+H_T+\bM_{\phi,T})=p\phi^*(K_T+B_T'+\bM_{\phi,T}).
\end{align*}
\end{proof}

\begin{prop}\label{prop:cbfindex}
Let $\fR\subseteq[0,1]\cap\Qq$ be a finite set, and $\Phi:=\Phi(\fR)$. Then there exist a positive integer $p$ and a hyperstandard set $\Phi'\subseteq[0,1]\cap\Qq$ depending only on $\Phi$ satisfying the following.

Assume that $(X/Z,B)$ is an lc pair of dimension $\le3$ and $\phi:X\to T$ is a contraction over $Z$ such that $\dim T>0$, $B\in\Phi$, and $K_X+B\sim_{\Qq,T}0.$ Then we can choose a moduli part $\bM_\phi$ of the canonical bundle formula for $(X,B)$ over $T$, such that $B_T\in\Phi'$, $p\bM_\phi$ is b-Cartier, and
$$p(K_X+B)\sim p\phi^*(K_T+B_T+\bM_{\phi,T}),$$
where $B_T$ is the discriminant part of the canonical bundle formula for $(X,B)$ over $T$.
Moreover, if $\dim T=\dim X-1$, then $p\bM_\phi$ is base point free over $Z$.
\end{prop}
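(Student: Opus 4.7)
The plan is to establish the three asserted properties---coefficients of $B_T$, $b$-Cartier index of $\bM_\phi$, and base point freeness in the equidimensional case---in sequence, leveraging effective adjunction results in low relative dimension. Set $r:=\dim X-\dim T\in\{0,1,2\}$.

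\emph{Discriminant.} For a prime divisor $Q$ on a sufficiently high birational model of $T$, the coefficient $1-\mult_Q B_T$ equals the log canonical threshold, over the generic point of $Q$, of $\phi^*Q$ with respect to $(X,B-B\wedge\phi^*Q)$. Since $B\in\Phi=\Phi(\fR)$ and $\dim X\le 3$, the adjunction principle for hyperstandard coefficients (Shokurov--Ambro--Birkar) produces a finite set $\fR'\supseteq\fR$ depending only on $\fR$ such that $B_T\in\Phi':=\Phi(\fR')$. This fixes the first conclusion.

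\emph{Moduli part and linear equivalence.} I split on $r$. If $r=0$, then $\phi$ is birational, so I take $\bM_\phi=0$ and any $p$ clearing the denominators of $\Phi'$. If $r=1$, the generic fiber is a curve and Prokhorov--Shokurov's effective adjunction for one-dimensional fibers supplies $p_1$, depending only on $\Phi$, such that a moduli part $\bM_\phi$ can be chosen with $p_1\bM_\phi$ $b$-Cartier and base point free over $Z$. If $r=2$, then $\dim X=3$ and $T$ is a curve; a general fiber is a log Calabi--Yau surface pair with coefficients in $\Phi$, whose index is bounded in terms of $\Phi$, and on the base curve one bounds the degree of the descended moduli, giving $p_2$ with $p_2\bM_\phi$ $b$-Cartier. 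In each case the canonical bundle formula yields $p(K_X+B)\sim_\QQ p\phi^*(K_T+B_T+\bM_{\phi,T})$ for $p$ a common multiple of $p_1,p_2$ and a denominator for $\Phi'$. To upgrade $\sim_\QQ$ to $\sim$ I would exploit the non-uniqueness of $\bM_\phi$---it is determined only up to an $\RR$-linearly trivial pullback from $T$---and absorb a bounded-order torsion obstruction by enlarging $p$ by a bounded factor.

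\emph{Main obstacle.} The delicate step is base point freeness of $p\bM_\phi$ when $r=1$: this uses the full Prokhorov--Shokurov effective adjunction for curve fibrations, where the moduli $b$-divisor is controlled by the variation of Hodge structure on the generic fiber together with a bound on the denominators of the $j$-invariant. Ensuring that the resulting $p$ depends only on $\Phi$ (not on the particular $(X,B,\phi)$) is the main technical content. The $r=2$ case, while not requiring base point freeness, is also delicate in that the $b$-Cartier index must be uniform in $\Phi$; fortunately the low dimension and the fact that $T$ is a curve make the bound tractable via the surface case together with a degree argument on the base.
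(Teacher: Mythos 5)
The paper's proof of this proposition is pure citation: it defers to \cite[Proposition 3.1]{CHL22} for the hyperstandard control of the discriminant and the choice of moduli part realizing $\sim$ (rather than $\sim_\Qq$), and to \cite[Theorem 5.5]{FMX19} for the b-Cartierness of $\bM_\phi$ (and its base point freeness when $\dim T = \dim X - 1$). Your proposal, by contrast, tries to reconstruct the argument behind those references. You correctly isolate the two essential inputs---boundedness of denominators for $B_T$ via Birkar-style adjunction, and Prokhorov--Shokurov effective adjunction for curve fibers---so the plan is in the right direction, but as written it has gaps that would not survive scrutiny.

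The most serious gap is the $r=2$ case ($\dim X = 3$, $\dim T = 1$). You assert that b-Cartierness of $\bM_\phi$ follows from the bounded index of the general fiber ``together with a degree argument on the base.'' This undersells the difficulty: a bounded degree for a $\Qq$-divisor on a curve does not control the denominator of its coefficients. The actual content is about the variation of Hodge structure of the family and bounds on local monodromy/semistable reduction, which is precisely what \cite[Theorem 5.5]{FMX19} (and, for the stronger base point freeness claim proved later in Proposition~\ref{prop:bsa}, Floris's theorem together with Alexeev's boundedness of the fibers) provides. Your sketch does not actually produce a uniform $p_2$.

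The passage from $\sim_\Qq$ to $\sim$ is also muddled. You frame it as ``absorbing a bounded-order torsion obstruction by enlarging $p$,'' but that is not available uniformly: the order of torsion in $\Pic(T)$ is unbounded. The correct mechanism is the one you gesture at in passing---$\bM_\phi$ is defined only up to $\Rr$-linear equivalence, and one \emph{chooses} a representative (equivalently a rational function of bounded level) so that the exact relation $p(K_X+B)\sim p\phi^*(K_T+B_T+\bM_{\phi,T})$ holds on the nose. This should be stated as the content of the step, not as a fallback torsion argument. Finally, the $r=0$ case is not quite as trivial as ``take $\bM_\phi = 0$'': you still need the discriminant defined via lc thresholds to agree with the crepant pullback and the linear equivalence to hold at a $p$ controlled by $\Phi'$, though this is a minor point.

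In summary: your decomposition by relative dimension matches the architecture of the literature the paper cites, but the $r=2$ b-Cartierness and the upgrade to genuine linear equivalence are nontrivial results of \cite{FMX19} and \cite{CHL22} respectively, and the proposal does not supply arguments that would replace them.
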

\begin{proof}
The result follows from \cite[Proposition 3.1]{CHL22} and \cite[Theorem 5.5]{FMX19}.
\end{proof}

\begin{prop}\label{prop:bsa}
Let $\Phi\subseteq[0,1]\cap\Qq$ be a hyperstandard set. Then there exists a positive integer $p$ depending only on $\Phi$ satisfying the following.

Assume that $(X/Z,B)$ is a klt threefold pair and $\phi:X\to T$ is a contraction over $Z$, such that $\dim T=1$, $B\in\Phi$, and $K_X+B\sim_{\Qq,Z}0$. Then we can choose a moduli part $\bM_\phi$ of the canonical bundle formula for $(X,B)$ over $T$, such that 
$$p(K_X+B)\sim p\phi^*(K_T+B_T+\bM_{\phi,T}),$$
and $p\bM_{\phi}$ is base point free over $Z$, where $B_T$ is the discriminant part of the canonical bundle formula for $(X,B)$ over $T$.
\end{prop}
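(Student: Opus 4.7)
The plan is to combine Proposition \ref{prop:cbfindex}, which yields the bounded-index linear equivalence, with an effective b-semi-ampleness argument tailored to the present setting of relative dimension two over a curve. Applying Proposition \ref{prop:cbfindex} directly to $\phi: X\to T$ produces a positive integer $p_0$ depending only on $\Phi$ and a choice of moduli part $\bM_\phi$ satisfying
$$p_0(K_X+B)\sim p_0\phi^*(K_T+B_T+\bM_{\phi,T}),$$
with $p_0\bM_\phi$ b-Cartier. Hence the remaining task is to show that some bounded multiple $p$ of $p_0$, depending only on $\Phi$, makes $p\bM_\phi$ base point free over $Z$.

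For the base-point-freeness step I would first pass to the normalization and assume $T$ is a smooth curve, so that $p_0\bM_\phi$ descends to a Cartier divisor on $T$. The generic fiber of $\phi$ is a klt projective surface $(F, B|_F)$ with $K_F+B|_F\sim_\Qq 0$ and $B|_F\in\Phi$. Boundedness of klt log Calabi-Yau surfaces with coefficients in a hyperstandard (hence DCC) set, from the Alexeev-type theory in dimension two together with the ACC for log canonical thresholds, implies that these fiber pairs form a bounded family; in particular there exists a positive integer $I$ depending only on $\Phi$ such that $I(K_F+B|_F)\sim 0$ on every general fiber. This bounds the Cartier index of $\bM_{\phi,T}$ and controls the variation of the fibers in moduli.

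The core, and hardest, step is then effective b-semi-ampleness of $\bM_\phi$ in relative dimension two, which is precisely the \textbf{Effective Adjunction} flagged in the introduction as open in general relative dimension $\ge 2$. The plan to overcome it in this specific setting is to combine three ingredients: semi-positivity of $\bM_{\phi,T}$ on the smooth curve $T$, obtained via the Fujino--Mori / Ambro Hodge-theoretic approach to the moduli part; the uniform fiber index $I$ from the previous step, which makes $p_0 I \bM_{\phi,T}$ an integral nef Cartier divisor of controlled degree on $T$; and an effective section-generation argument for nef divisors on a curve projective over $Z$, possibly after running a relative MMP on a suitable model to put $(X,B)$ in a well-behaved form over $T$. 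Putting these together, one extracts a bounded multiple $p$ of $p_0 I$, depending only on $\Phi$, such that $p\bM_\phi$ is base point free over $Z$; the main obstacle is precisely the leap from bounded index to effective base point freeness of the moduli, which is made tractable here because the base $T$ is one-dimensional and the fibers vary in a bounded family of klt log Calabi-Yau surfaces.
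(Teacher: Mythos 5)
Your high-level plan matches the paper's structure (reduce to the case $Z$ a point via Proposition \ref{prop:cbfindex}, use boundedness of klt log Calabi--Yau surfaces for the fibers), but the ``core step'' is left as a plan rather than an argument, and as stated the plan would fail in exactly the hard case. The paper splits according to the sign of $\deg K_T$: if $K_T\not\equiv 0$, then $T\cong\Pp^1$ and any nef Cartier divisor on $\Pp^1$ is already base point free, so $p_1\bM_{\phi,T}$ is base point free with no further work. If $K_T\equiv 0$ then $T$ has genus one and $B_T=0$, and since $K_T+B_T+\bM_{\phi,T}\sim_\Qq 0$ the moduli divisor $\bM_{\phi,T}$ has degree \emph{zero}. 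Here your proposed ``effective section-generation argument for nef divisors on a curve'' cannot produce sections: the linear system of a degree-zero divisor on a genus-one curve is empty unless the divisor is actually torsion, and what you need is a uniform bound on the torsion order.

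That bound is the genuinely nontrivial input you are missing: it is Floris's effective b-semiampleness theorem \cite[Theorem 1.2]{Flo14}, which for a one-dimensional base gives a positive integer $p_2$, depending only on the second Betti number $b_F$ of a smooth model of the index-one cover of the general fiber $F$, with $p_2\bM_{\phi,T}\sim 0$. Your observation about boundedness of klt log Calabi--Yau surface pairs with coefficients in $\Phi$ is then used precisely to bound $b_F$ (via the $\epsilon$-lc bound of Lemma \ref{lem:dcckltsing} and \cite{Ale94} when $B_F\neq 0$, and $b_F\le 22$ by surface classification when $B_F=0$), not to ``control the variation of the fibers in moduli'' directly. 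In summary: your ingredients are essentially the right ones, but the pivot from bounded index to base point freeness needs the $\Pp^1$ / genus-one dichotomy and, in the genus-one case, Floris's theorem rather than a section-production argument, which would never get off the ground in degree zero.
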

\begin{proof}
If $\dim Z=1$, then $T=Z$. It follows that $p_1\bM_{\phi,T}$ is Cartier and thus $p_1\bM_{\phi,T}\sim_{Z}0$, where $p_1$ is given by Proposition \ref{prop:cbfindex} depending only on $\Phi$, and $\bM_\phi$ is a moduli part chosen as in Proposition \ref{prop:cbfindex}. So in what follows, we may assume that $\dim Z=0$, i.e., $Z$ is a point.

If $K_T\not\equiv0$, then $T=\Pp^1$. Let $\bM_\phi$ be a moduli part chosen as in Proposition \ref{prop:cbfindex}. Then $p_1\bM_{\phi,T}$ is base point free.

Now assume that $K_T\equiv0$ and in particular, $B_T=0$. Let $F$ be a general fiber of $X\to T$ and $K_F+B_F:=(K_X+B)|_F\sim_\Qq0$. According to \cite[Proposition 6.4 and Theorem 1.1]{CH21}, $r(K_F+B_F)\sim0$ for some positive integer $r$ depending only on $\Phi$. Then there exist a rational function $\alpha \in K(X)$ and an $\Rr$-Cartier $\Rr$-divisor $L$ on $T$ such that $K_X + B + \frac{1}{r}(\alpha) = \phi^\ast L$. Let $\bM_{\phi,T} = L-K_T-B_T$. Then $r(K_X+B)\sim r\phi^*(K_T+B_T+\bM_{\phi,T})$. Let $b_F$ be the second Betti number of a smooth model of the index one cover of $F$. By Lemma \ref{lem:dcckltsing}, there exists a positive real number $\epsilon$ which only depends on $\Phi$ such that $(F,B_F)$ is $\epsilon$-lc. If $B_F\neq0$, then $F$ belongs to a bounded family by \cite[Theorem 6.9]{Ale94}, and hence $b_F$ has an upper bound.
If $B_F=0$, then $K_F\sim_\Qq0$, and hence $b_F \le 22$ by the classification of surfaces. Therefore by \cite[Theorem 1.2]{Flo14}, there exists a positive integer $p_2$ depending only on $b_F$ such that $p_2\bM_{\phi,T}\sim0$.

We conclude that $p:=p_1p_2r$ has the required property.
\end{proof}

\subsection{Lifting complements}
Now we turn to the following technical statement on lifting complements via the canonical bundle formula. 

\begin{prop}\label{prop:cbflift1}
Let $p$ and $n$ be two positive integers such that $p\mid n$. Let $(X/Z,B)$ be an lc pair and $\phi:X\to T$ a contraction over $Z$ such that $\dim T>0$ and $K_X+B\sim_{\Rr,T}0$. Let $B_T$ and $\bM_{\phi}$ be the discriminant part and a moduli part of the canonical bundle formula for $(X,B)$ over $T$, such that $p(K_{X}+B)\sim p\phi^*(K_{T}+B_{T}+\bM_{\phi,T})$ and $p \bM_{\phi}$ is b-Cartier.
Let $(T',B_{T'}+\bM_{\phi}) \to (T,B_T+\bM_{\phi})$ be a crepant model and $M_{T'}$ an effective $\Qq$-divisor on $T'$, such that
\begin{enumerate}
  \item for any prime divisor $P\subseteq\Supp B$ which is vertical over $T$, the image of $P$ on $T'$ is a prime divisor,
  \item $pM_{T'}\sim_Z p\bM_{\phi,T'}$ and $M_{T'}\wedge B_{T'}=0$, and
  \item $(T'/Z\ni z,B_{T'}+M_{T'})$ is $n$-complementary for some $z\in Z$.
\end{enumerate}
Then $(X/Z\ni z,B)$ is also $n$-complementary.
\end{prop}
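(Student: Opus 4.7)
The plan is to lift the $n$-complement $(T',B_{T'}^+)$ back to $X$ via the canonical bundle formula, following the template of \cite[Proposition 6.3]{Bir19}. Let $\psi\colon T'\to T$ denote the birational morphism underlying the crepant model. First I would choose a log resolution $f\colon\tilde X\to X$ together with a morphism $\tilde\phi\colon\tilde X\to T'$ satisfying $\psi\circ\tilde\phi=\phi\circ f$; after possibly passing to a higher model we may also assume that $\bM_\phi$ descends to $T'$. Writing $K_{\tilde X}+\tilde B=f^*(K_X+B)$, so that $\tilde B$ is a sub-boundary since $(X,B)$ is lc, the hypothesis yields
\[
p(K_{\tilde X}+\tilde B)\sim p\tilde\phi^*(K_{T'}+B_{T'}+\bM_{\phi,T'}).
\]

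Next I would set $\Theta':=B_{T'}^+-B_{T'}-M_{T'}$ and define $\tilde B^+:=\tilde B+\tilde\phi^*\Theta'$ on $\tilde X$ and $B^+:=f_*\tilde B^+=B+f_*\tilde\phi^*\Theta'$ on $X$. The required linear equivalence $n(K_X+B^+)\sim 0$ over a neighborhood of $z$ will follow by combining three ingredients on $\tilde X$: the display above multiplied by $n/p$; the replacement of $\bM_{\phi,T'}$ by $M_{T'}$, legitimate up to $n$-linear equivalence over a neighborhood of $z$ because $pM_{T'}\sim_Z p\bM_{\phi,T'}$ and $p\mid n$ (condition~(2)); and the $n$-complement relation $n(K_{T'}+B_{T'}^+)\sim 0$ near $z$ (condition~(3)). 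Pushing forward by $f_*$ then preserves linear equivalence. To verify that $(X,B^+)$ is lc near $z$, I would apply Lemma~\ref{lem:cbfcptoverbase}(2) to the canonical bundle formula of $(\tilde X,\tilde B)$ over $T'$ with $G_{T'}:=M_{T'}+\Theta'$: since $(T',B_{T'}^+)$ is lc and $\bM_\phi$ descends to $T'$ as a nef b-divisor, the g-sub-pair $(T',B_{T'}^++\bM_\phi)$ is sub-glc, so Lemma~\ref{lem:cbfcptoverbase}(2) gives $(\tilde X,\tilde B+\tilde\phi^*(M_{T'}+\Theta'))$ sub-lc; as $\tilde\phi^*M_{T'}\ge0$, we conclude $(\tilde X,\tilde B^+)$ is sub-lc, and lc-ness of $(X,B^+)$ follows on pushing down.

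The main obstacle will be verifying the coefficient inequality $nB^+\ge n\lfloor B\rfloor+\lfloor(n+1)\{B\}\rfloor$ divisor-by-divisor. For a horizontal prime divisor $D$ the pullback $\tilde\phi^*\Theta'$ contributes nothing, so $\mult_D B^+=\mult_D B$; integrality of $pB$ (forced by the hypothesis that $p(K_X+B)\sim p\phi^*(K_T+B_T+\bM_{\phi,T})$ is a genuine linear equivalence) together with $p\mid n$ reduces the inequality to the tautology $B^+\ge B$. For a vertical prime $D\subseteq\Supp B$, condition~(1) provides a prime image $Q_D\subseteq T'$ and condition~(2) forces $\mult_{Q_D}M_{T'}=0$; the discriminant relation of the canonical bundle formula then expresses both $\mult_D B^+$ and $\mult_D B$ in terms of $\mult_{Q_D}B_{T'}^+$ and $\mult_{Q_D}B_{T'}$ through a common ramification multiplier of $\tilde\phi$ along $Q_D$, and the $n$-complement inequality on $T'$ transfers to $X$ by a careful floor-function manipulation in which Lemmas~\ref{lem:ncomplneq} and \ref{lem:ncomplneq2} play the crucial combinatorial role. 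The remaining vertical primes (those not in $\Supp B$) are handled by a non-negativity argument for $\tilde\phi^*\Theta'$ along their images on $T'$, once more leveraging the $n$-complement inequality and condition~(2).
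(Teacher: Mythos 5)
Your overall plan is essentially the paper's: construct a birational model of $X$ with a morphism to $T'$, define a candidate complement by adding the pullback of $B_{T'}^+-B_{T'}$ to $B$, verify the linear equivalence by pushforward, control log canonicity via Lemma~\ref{lem:cbfcptoverbase}(2), and check the coefficient inequality divisor by divisor, separating horizontal and vertical primes. The paper's proof uses $X'=$ the normalization of the main component of $X\times_T T'$ rather than a log resolution, but that is a cosmetic choice. However, there are two genuine problems in the middle of your write-up.

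First, the sentence ``after possibly passing to a higher model we may also assume that $\bM_\phi$ descends to $T'$'' is not a harmless reduction. The model $T'$ and all three hypotheses (1)--(3) are \emph{given}; replacing $T'$ by a higher crepant model $T''$ changes the discriminant $B_{T''}$, changes what $M_{T''}$ should be, and you would then have to re-verify condition (3) (that $(T''/Z\ni z,B_{T''}+M_{T''})$ is $n$-complementary). Lemma~\ref{lemma pullbaclcomplement}(2) only lets you lift \emph{monotonic} $n$-complements down, not arbitrary $n$-complements up, so this is not automatic. In fact, this reduction is also unnecessary: to verify that $(T',B_{T'}^++\bM_\phi)$ is sub-glc (which is what Lemma~\ref{lem:cbfcptoverbase}(2) asks for) one does not need $\bM_\phi$ to descend to $T'$. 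After normalizing $\bM_\phi$ in its linear equivalence class so that $\bM_{\phi,T'}=M_{T'}$, the negativity lemma on a model where $\bM_\phi$ does descend gives $\bM_{\phi,T''}\le g^*M_{T'}$, so every log discrepancy of $(T',B_{T'}^++\bM_\phi)$ is at least the corresponding one of $(T',B_{T'}^++M_{T'})$, which is $\ge 0$ by hypothesis (3). That is the argument you want, and it works without touching $T'$.

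Second, and more seriously, the coefficient inequality is hand-waved, and the two lemmas you invoke are the wrong ones. Lemmas~\ref{lem:ncomplneq} and \ref{lem:ncomplneq2} are statements about the hyperstandard sets $\Gamma(\cN,\Phi)$ and have no role here: the coefficients $b_P,b_Q$ in this proposition are not assumed to lie in any hyperstandard set. What is actually used is a short explicit chain of floor manipulations. Writing $b_P,b_P^+,b_Q,b_Q^+$ for the relevant multiplicities and $m_Q$ for the multiplicity of $P$ in $\phi'^*Q$, one has $b_P^+=b_P+(b_Q^+-b_Q)m_Q$ and $r_{PQ}:=b_P+(1-b_Q)m_Q=b_P^++(1-b_Q^+)m_Q\le 1$, with $nb_P^+,nb_Q^+,m_Q\in\Zz$. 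Then (when $b_P,b_Q<1$)
\begin{align*}
\lf(n+1)b_P\rf
&=nb_P^++\lf b_P^++((n+1)b_Q-nb_Q^+)m_Q-b_Q^+m_Q\rf\\
&\le nb_P^++\lf b_P^++\{(n+1)b_Q\}m_Q-b_Q^+m_Q\rf=nb_P^+,
\end{align*}
using $nb_Q^+\ge\lf(n+1)b_Q\rf$ (which comes from the $n$-complement condition on $T'$ and $M_{T'}\wedge B_{T'}=0$) for the inequality and $r_{PQ}\le 1$ for the final vanishing of the floor. Your sketch does not produce this chain, and pointing to the hyperstandard-set lemmas suggests you did not locate where the integrality of $nB_{T'}^+$, the bound $r_{PQ}\le1$, and the integrality of $m_Q$ actually enter. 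As a corollary, the same $nb_P^+\ge\lf(n+1)b_P\rf$ argument already covers the vertical primes not lying in $\Supp B$ (for those, $b_P=0$ and one only needs $b_P^+\ge 0$, which follows from $r_{PQ}\ge 0$); there is no need for a separate ``non-negativity argument.''
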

\begin{proof}
Let $X'$ be the normalization of the main component of $X\times_{T}T'$. Denote by $f:X'\to X$ and $\phi':X'\to T'$ the induced morphisms. We may write $K_{X'}+B'=f^*(K_X+B)$ for some $\Rr$-divisor $B'$. Note that by our assumption, we have
$$p(K_{X'}+B')\sim p\phi'^*(K_{T'}+B_{T'}+\bM_{\phi,T'})\sim_Z p\phi'^*(K_{T'}+B_{T'}+M_{T'}).$$
Let $(T'/Z\ni z,B_{T'}^++M_{T'})$ be an $n$-complement of $(T'/Z\ni z,B_{T'}+M_{T'})$. We remark that as $p\mid n$, $B^+_{T'}\ge 0$. Possibly shrinking $Z$ near $z$, we may assume that
$$n(K_{T'}+B_{T'}^++M_{T'})\sim_Z0.$$ 
Let $B'^+:=B'+\phi'^*(B_{T'}^+-B_{T'})$ and $B^+:=f_*B'^+$. We claim that $(X/Z\ni z,B^+)$ is an $n$-complement of $(X/Z\ni z,B)$. Indeed, we have
\begin{align*}
n(K_{X'}+B'^+)&=n(K_{X'}+B')+n(B'^+-B')\\&
\sim_Z n\phi'^*(K_{T'}+B_{T'}+M_{T'})+n\phi'^*(B_{T'}^+-B_{T'})\\& = n\phi'^*(K_{T'}+B^+_{T'}+M_{T'})\sim_Z0.
\end{align*}
Hence $n(K_X+B^+)\sim_Z0$. According to Lemma \ref{lem:cbfcptoverbase}(2), the sub-pair $(X'/Z\ni z,B'^{+})$ is sub-lc, and thus $(X/Z\ni z,B^{+})$ is also sub-lc. It suffices to prove that 
$$nB^+\ge \lf(n+1)\{B\}\rf+n\lf B\rf.$$

Let $P\subseteq\Supp B^+$ be a prime divisor. If $P$ is horizontal over $T$, then $\mult_PB^+=\mult_PB$ and there is nothing to prove. So we may assume that $Q$, the image of $P$ on $T'$, is a prime divisor. Let $b_P:=\mult_PB$, $b_P^+:=\mult_PB^+,b_Q:=\mult_QB_{T'}$, $b_Q^+:=\mult_QB_{T'}^+$, and $m_Q:=\mult_P\phi'^*Q$ over the generic point of $Q$. It is clear that $b_Q^+\ge0$ as $B_{T'}^+\ge0$. By construction,
$$b_P^+=b_P+(b_Q^+-b_Q)m_Q.$$
Hence
$$r_{PQ}:=b_P+(1-b_Q)m_Q=b_P^++(1-b_Q^+)m_Q\in\frac{1}{n}\Zz_{\ge0}.$$
Moreover, as $1-b_Q$ is the lc threshold of $\phi'^*Q$ with respect to $(X',B')$ over the generic point of $Q$, we know $r_{PQ}\le1$. If $b_Q=1$, then $b_Q^+=1$ and thus $b_P=b_P^+.$ If $b_P=1$, then $r_{PQ}=b_Q=1$ and thus $b_P^+=1$. Hence we may assume that $b_Q<1$ and $b_P<1$. 
Since $b_P=b_P^+- (b_Q^+-b_Q)m_Q$ and $nb_Q^+\ge\lf(n+1)b_Q\rf,$ we can see that
\begin{align*}
\lf(n+1)b_P\rf&=\lf(n+1)b_P^++(n+1)(b_Q-b_Q^+)m_Q\rf
\\&=nb_P^++\lf b_P^++((n+1)b_Q-nb_Q^+)m_Q-b_Q^+m_Q\rf
\\&\le nb_P^++\lf b_P^++\{(n+1)b_Q\}m_Q-b_Q^+m_Q\rf
\\&=nb_P^+,
\end{align*}
where the last equality holds as
$$b_P^++\{(n+1)b_Q\}m_Q-b_Q^+m_Q<b_P^++m_Q-b_Q^+m_Q=r_{PQ}\le1.$$
We finish the proof.
\end{proof}

\section{Boundedness of complements for sdlt curves}\label{sec4}
\begin{defn}
We say $X$ is a \emph{semismooth curve} if $X$ is a reduced scheme of dimension $1$, every irreducible component of $X$ is normal, and all of its singularities are simple normal crossing points. 

Let $X$ be a semismooth curve, and let $B\ge0$ be an $\Rr$-divisor on $X$. We say $(X,B)$ is \emph{sdlt} if $B$ is supported in the smooth locus of $X$ and $\rddown{B}\le 1$.
\end{defn}

\begin{defn}
Let $X$ be a semismooth curve, and $B\ge0$ an $\Rr$-divisor on $X$, such that $(X,B)$ is sdlt.  
We say that $(X,B^+)$ is an \emph{$n$-semi-complement} of $(X,B)$, if
\begin{enumerate} 
  \item $(X,B^+)$ is sdlt,
  \item $nB^+\ge n\lfloor B\rfloor+\lfloor(n+1)\{B\}\rfloor$, and
  \item $n(K_X+B^+)\sim 0$.    
\end{enumerate} 
Moreover, we say $(X,B^+)$ is \emph{monotonic} if we additionally have $B^{+}\ge B$.
\end{defn}

The following theorem is a generalization of \cite[5.2.2]{Sho92} and \cite[19.4 Theorem]{Kol92} where the case $l=1$ is proved. 

\begin{thm}\label{thm:sdltcurvescpt}
Let $l$ be a positive integer. Then there exists a finite set of positive integers $\cN_{sdlt}$ divisible by $l$ depending only on $l$ satisfying the following.

Assume that $X$ is a semismooth curve, connected but not necessarily complete, and $B\ge0$ is an $\Rr$-divisor on $X$, such that
 \begin{enumerate}
     \item $(X,B)$ is sdlt,
     \item $X$ has at least one complete component,
     \item each incomplete component of $X$ does not meet any other incomplete component of $X$,
     \item the union of the complete components of $X$ is connected, and
     \item $-(K_X+B)$ is nef on each complete component of $X$.
 \end{enumerate}
 Then there exists an $n$-semi-complement $(X,B^+)$ of $(X,B)$ in a neighborhood of the union of the complete components of $X$ for some $n\in \cN_{sdlt}$.
\end{thm}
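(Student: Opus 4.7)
The strategy is to apply Theorem~\ref{thm:curcmpt} to each complete component after adjunction and then glue the resulting complements at the nodes of $X$, following the line of \cite[5.2.2]{Sho92} and \cite[19.4]{Kol92} while tracking divisibility by $l$.

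First, I would let $\mathcal{N}_0$ be the finite set of positive integers divisible by $l$ supplied by Theorem~\ref{thm:curcmpt}, and define $\mathcal{N}_{sdlt}$ to consist of the least common multiples of nonempty subsets of $\mathcal{N}_0$, which is finite and still divisible by $l$. Label the complete components $X_1,\dots,X_k$, and for each $i$ let $D_i$ denote the sum, with coefficient $1$, of the nodes of $X$ lying on $X_i$. Adjunction gives $(K_X+B)|_{X_i} = K_{X_i} + D_i + B|_{X_i}$, and hypothesis (5) forces $\deg(K_{X_i}+D_i+B|_{X_i}) \le 0$; since $X_i$ is a smooth complete curve, one readily checks that $(X_i, D_i + B|_{X_i})$ is $\mathbb{R}$-complementary (add a general effective $\mathbb{R}$-divisor of the right degree at a smooth point). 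Theorem~\ref{thm:curcmpt} then yields an $n_i$-complement $(X_i, B_i^+)$ of $(X_i, D_i + B|_{X_i})$ with $n_i \in \mathcal{N}_0$, and after replacing each $n_i$ by $n := \mathrm{lcm}(n_1,\dots,n_k) \in \mathcal{N}_{sdlt}$ one obtains $n$-complements on every complete component.

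Next, I would define $B^+$ on $X$ by setting $B^+|_{X_i} = B_i^+ - D_i$ on each complete component and $B^+|_Y = B|_Y$ on each incomplete component $Y$. Then $B^+$ is supported in the smooth locus, satisfies $\lfloor B^+ \rfloor \le 1$, and the required inequality $nB^+ \ge n\lfloor B \rfloor + \lfloor (n+1)\{B\}\rfloor$ holds. What remains is to verify $n(K_X + B^+) \sim 0$ in a neighborhood of $X^c := \bigcup_i X_i$. For each $i$ pick a rational function $f_i$ on $X_i$ with $\mathrm{div}(f_i) = n(K_{X_i} + B_i^+)$, unique up to scalar; the desired global equivalence amounts to choosing scalings so that $f_i(p) = f_j(p)$ at every node $p \in X_i \cap X_j$ between two complete components. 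At nodes where a complete component meets an incomplete one, the trivialization on the incomplete side can be chosen freely to match, so no obstruction arises there.

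\textbf{Main obstacle.} The hardest step is the last when the dual graph $\Gamma$ of $X^c$ has cycles: the matching constraints yield a cocycle in $\mathbb{C}^\ast$ on $\Gamma$ modulo the $(\mathbb{C}^\ast)^k$ rescaling freedom, i.e., an obstruction class in $H^1(\Gamma, \mathbb{C}^\ast)$ that must be annihilated. I expect to kill this by combining two sources of flexibility: varying $B_i^+$ within the finite family of $n$-complements of $(X_i, D_i + B|_{X_i})$ of the same divisor class (which acts on $f_i$ by $n$-th roots of unity), and exploiting the freedom provided by incomplete components, which can break problematic cycles of $\Gamma$ when present. Calibrating these degrees of freedom determines the bounded enlargement of $\mathcal{N}_0$ needed in the definition of $\mathcal{N}_{sdlt}$, and extends the $l=1$ combinatorial analysis of \cite[19.4]{Kol92} to arbitrary $l$.
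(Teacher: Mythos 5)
The proposal has a genuine gap. The central defect is the step where you pass from $n_i$-complements on the individual complete components to a common $n = \lcm(n_1,\dots,n_k)$: being an $n_i$-complement does \emph{not} imply being an $n$-complement for $n$ a multiple of $n_i$, because the inequality $n B^+ \ge n\lf B\rf + \lf(n+1)\{B\}\rf$ is not monotone in $n$. Concretely, if a coefficient of $B$ equals $0.7$ and $n_i = 3$, then $b^+ = 2/3$ satisfies $3b^+ = 2 \ge \lf 4\cdot 0.7\rf = 2$, so it could arise in a $3$-complement; but with $n = 9$ one has $9b^+ = 6 < \lf 10\cdot 0.7\rf = 7$, so the same $B^+$ is not a $9$-complement. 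Hence a set $\cN_{sdlt}$ built from least common multiples of elements of $\cN_0$ does not do the job, and the argument stalls already before the gluing step.

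The paper sidesteps this by a structural observation you have missed. Under hypothesis (5), a complete component $X_0$ with genus $g$ and $k$ nodes must satisfy $2g-2+k \le 0$, so $(g,k)\in\{(1,0),(0,2),(0,1),(0,0)\}$. In the first and last cases $X$ is irreducible and Theorem \ref{thm:curcmpt} applies directly. Otherwise every complete component is a $\Pp^1$ with $1$ or $2$ nodes, so, combined with hypotheses (3)--(4), the dual graph of $X$ is a chain or a cycle. In the cycle case $B = 0$ and $K_X\sim 0$ outright (Lemma \ref{lem: curve cpt kx of a cycle}), so the $H^1(\Gamma,\Cc^*)$ obstruction you worry about never materializes; your proposed workaround (varying $B_i^+$ or using incomplete components to break cycles) is therefore not needed and, as stated, is not a proof. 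In the chain case only the (at most two) end components of type $(0,1)$ carry nontrivial boundary, and the paper's Lemma \ref{lem: bdd of complement for 2 curves} — via Dirichlet's theorem on primes in arithmetic progressions and the pigeonhole principle, not via lcm's — produces a single $n$ that works for both ends simultaneously. Finally the gluing along a chain is unobstructed (Lemma \ref{lem: curve cpt glue through a chain}). So the overall framework of your proposal (adjoin, complement componentwise, glue) is the right shape, but both the choice of a uniform $n$ and the gluing step need the chain/cycle dichotomy that you omitted.
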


\begin{proof}
Let $X_0$ be a complete component of $X$, and let $\{P_1,\ldots,P_k\}:=X_0\cap \Sing X$. Then $\deg(K_X|_{X_0})=2g-2+k$, where $g$ is the genus of $X_0$. Since $\deg(K_X|_{X_0})\le0,$ there are four possibilities:
\begin{enumerate}[label=(\roman*)]
    \item $g=1,k=0,\deg(K_X|_{X_0})=0$,
    \item $g=0,k=2,\deg(K_X|_{X_0})=0$,
    \item $g=0,k=1,\deg(K_X|_{X_0})=-1$,
    \item $g=0,k=0,\deg(K_X|_{X_0})=-2$.
\end{enumerate}
We remark that $B$ could not meet the components of type (\romannum{1}) or (\romannum{2}) as $\deg(K_X|_{X_0})=0$. 

If $X_0$ is of type (\romannum{1}), then $X=X_0$ and $B=0$. In this case, $(X,B)$ is $l$-complementary. 

If $X_0$ is of type (\romannum{4}), then $X=X_0$ and $X\cong \Pp^1$. By Theorem \ref{thm:curcmpt}, there exists a finite set of positive integers $\cN'$ divisible by $l$ depending only on $l$, such that $(X,B)$ is $\cN'$-complementary.

Now suppose that any complete component of $X$ is either of type (\romannum{2}) or of type (\romannum{3}). Note that each component of type (\romannum{2}) (respectively type (\romannum{3})) can only meet other components at two points (respectively one point). By assumptions (3) and (4), the entire curve $X$ must form a chain or a cycle.
If $X$ is a cycle, then $B=0$ and $K_X \sim 0$ by Lemma \ref{lem: curve cpt kx of a cycle}.
Otherwise, by Lemma \ref{lem: curve cpt glue through a chain}, it suffices to construct $B^+$ such that $(X, B^+)$ is an $n$-complement of $(X, B)$ on each component of $X$.
Note that possibly shrinking $X$ near the union of the complete components, for any positive integer $n$, $(X,B)$ is an $n$-complement of itself on each incomplete component and each complete component of type (\romannum{2}).
Since $X$ has at most two complete components of type (\romannum{3}), by Lemma \ref{lem: bdd of complement for 2 curves} there exists a finite set of positive integers $\cN''$ divisible by $l$ depending only on $l$, such that $(X,B)$ has an $n$-complement on each complete component of type (\romannum{3}) for some $n\in\cN''$.
Therefore by Lemma \ref{lem: curve cpt glue through a chain}, $(X,B)$ has an $n$-semi-complement for some $n\in\cN''$.

Let $\cN_{sdlt}:=\cN'\cup\cN''$ and we are done.
\end{proof}

\begin{lem}\label{lem: curve cpt kx of a cycle}
Let $X = \bigcup_{i=1}^m X_i$ be a semismooth curve which is a cycle of irreducible curves $X_i$. Suppose that $X_i \cong \Pp^1$ for any $1 \le i \le m$. Then $K_X \sim 0$.
\end{lem}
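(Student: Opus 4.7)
The plan is to prove the equivalent statement $\omega_X \cong \cO_X$, which will give $K_X \sim 0$ since $X$ is Gorenstein. Reindex so that $N_i := X_i \cap X_{i+1}$ (indices modulo $m$) are the $m$ nodes of $X$; then each component $X_i \cong \Pp^1$ contains exactly the two nodes $N_{i-1}$ and $N_i$. Adjunction for the dualizing sheaf of a nodal curve then gives
\[
\omega_X|_{X_i} \cong \omega_{X_i}(N_{i-1}+N_i),
\]
which on $\Pp^1$ has degree $-2+2=0$, so $\omega_X|_{X_i} \cong \cO_{X_i}$ for every $i$.

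The next step is to produce a nonzero global section of $\omega_X$. Using the normalization exact sequence
\[
0 \to \cO_X \to \nu_*\cO_{X^\nu} \to \bigoplus_{j=1}^m k(N_j) \to 0,
\]
where $\nu\colon X^\nu = \bigsqcup_i X_i \to X$ is the normalization, one computes $\chi(\cO_X)=m-m=0$, so the arithmetic genus is $p_a(X)=1$. Since $X$ is Gorenstein, Serre duality yields $h^0(X,\omega_X)=h^1(X,\cO_X)=p_a(X)=1$, and we let $s$ denote a nonzero global section.

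Finally, I would show that $s$ is nowhere vanishing. On each $X_i$, the restriction $s|_{X_i}$ is a global section of the trivial line bundle $\omega_X|_{X_i}\cong\cO_{X_i}$ on $\Pp^1$, hence a constant $c_i$. If $c_i=0$ for some $i$, then $s|_{X_i}\equiv 0$, so $s$ vanishes at $N_i$ viewed from the $X_i$-branch; but the fiber $\omega_X\otimes k(N_i)$ is one-dimensional and both branch-restriction maps are isomorphisms onto it, so $s$ must also vanish at $N_i$ from the $X_{i+1}$-branch, forcing $c_{i+1}=0$. Iterating around the cycle yields $s\equiv 0$, contradicting the choice of $s$. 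Hence all $c_i$ are nonzero, $s$ trivializes $\omega_X$, and $K_X\sim 0$. The only nontrivial inputs are adjunction and Serre duality for the Gorenstein curve $X$, both of which are standard; the one place to be careful is the fiber-comparison argument at each node, which is what propagates nonvanishing around the cycle.
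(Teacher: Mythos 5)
Your proof is correct and takes a genuinely different route from the paper's. The paper constructs, by induction on $m$, an explicit cycle $Y_m$ of rational curves embedded in a smooth projective surface $S_m$ with $K_{S_m}+Y_m\sim 0$ (starting from a line plus a conic in $\Pp^2$ and blowing up a node at each step), uses surface adjunction to get $K_{Y_m}\sim 0$, and then transfers the conclusion to $X$ via the analytic isomorphism $X\cong Y_m$ together with \cite[Appendix B, Theorem 2.1]{GTM52}. You instead work intrinsically with the dualizing sheaf of $X$: adjunction for nodal curves gives $\omega_X|_{X_i}\cong\cO_{X_i}$, the normalization sequence plus Serre duality give $h^0(\omega_X)=p_a(X)=1$, and the propagation argument around the cycle shows that the nonzero section is nowhere vanishing. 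Your route avoids the auxiliary surface construction and the comparison step, at the cost of invoking adjunction and Serre duality for Gorenstein curves; for readers comfortable with nodal curves this is probably the more familiar argument. One small phrasing remark: the node $N_i$ is a single closed point of $X$, so the value $s(N_i)\in\omega_X\otimes k(N_i)$ is one well-defined element rather than something ``viewed from a branch''; since the fiber of $\omega_X|_{X_j}$ at $N_i$ is canonically $\omega_X\otimes k(N_i)$ for $j=i,i+1$, the vanishing $s|_{X_i}(N_i)=0$ immediately gives $s|_{X_{i+1}}(N_i)=0$, which is what the argument needs.
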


\begin{proof}
For each integer $m\ge j \ge 2$, we construct a semismooth curve $Y_j$ in a smooth projective surface $S_{j}$ such that $Y_j$ is a cycle of $j$ complete rational curves and $K_{S_j}+Y_j \sim 0$, in particular, $K_{Y_j} = (K_{S_j} + Y_j)|_{Y_j} \sim 0$. Let $Y_2 \subseteq \Pp^2 =: S_2$ be the union of a line and a conic which is semismooth. Then $K_{S_2} + Y_2 \sim 0$ and thus $K_{Y_2} = (K_{S_2} + Y_2)|_{Y_2} \sim 0$. Suppose that we have constructed a semismooth curve $Y_{j-1}$ contained in a smooth projective surface $S_{j-1}$, such that $Y_{j-1}$ is a cycle of $j-1$ complete rational curves and $K_{S_{j-1}} + Y_{j-1} \sim 0$. Let $\pi_j: S_j \to S_{j-1}$ be the blow-up of $S_{j-1}$ at one snc point of $Y_{j-1}$, and $E_j$ the exceptional divisor of $\pi_j$. Let $Y_j = (\pi_j)_\ast^{-1} Y_{j-1} \cup E_j$. Then we get a semismooth curve $Y_j \subseteq S_j$, which is a cycle of $j$ complete rational curves, such that $K_{S_j} + Y_j \sim 0$. Since $X$ is analytically isomorphic to $Y_m$, by \cite[Appendix B, Theorem 2.1]{GTM52}, $K_{Y_m}\sim 0$ implies $K_X \sim 0$.
\end{proof}

\begin{lem}\label{lem: curve cpt glue through a chain}
Let $X = \bigcup_{i=1}^m X_i$ be a semismooth curve which is a chain of irreducible curves $X_i$. Suppose that $D$ is an $\Rr$-divisor on $X$, supported in the smooth locus of $X$, such that $D|_{X_i} \sim 0$ for any $1 \le i \le m$. Then $D \sim 0$.
\end{lem}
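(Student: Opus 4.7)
The plan is to produce a global rational function $f$ on the nodal curve $X$ with $(f) = D$ by rescaling and gluing the component-wise trivializations of $D$. For each $i$, since $X_i$ is a smooth projective curve and $D|_{X_i} \sim 0$, I would pick $f_i \in K(X_i)^\ast$ with $(f_i) = D|_{X_i}$. The hypothesis that $\Supp D$ lies in the smooth locus of $X$ is decisive here: every node $P_i := X_i \cap X_{i+1}$ ($1 \le i \le m-1$) avoids $\Supp D$, so each $f_i$ is regular and nonzero at every node on $X_i$, and in particular the scalar values $f_i(P_i), f_{i+1}(P_i) \in \Cc^\ast$ are well-defined.

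Next I would rescale to force matching at every node: set $c_1 := 1$ and inductively $c_{i+1} := c_i \cdot f_i(P_i)/f_{i+1}(P_i) \in \Cc^\ast$ for $1 \le i \le m-1$. The replacements $\tilde f_i := c_i f_i$ still satisfy $(\tilde f_i) = D|_{X_i}$, and now $\tilde f_i(P_i) = \tilde f_{i+1}(P_i)$ at every node. The chain structure is essential: the $m-1$ matching conditions can be solved one at a time from left to right, with each $c_{i+1}$ uniquely determined by $c_i$ and no closing constraint appearing at the end. If $X$ were a cycle instead of a chain the final matching condition might fail, which is exactly the phenomenon behind Lemma \ref{lem: curve cpt kx of a cycle}.

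Finally, since $\tilde f_i$ and $\tilde f_{i+1}$ are both units at $P_i$ and share the same value there, the pair defines a germ in $\mathcal{O}_{X,P_i}^\ast$; elsewhere the $\tilde f_i$ are just rational functions on the respective components. The tuple therefore glues to a global section $f \in H^0(X, \mathcal{M}_X^\ast)$ whose associated Cartier divisor is $\sum_i D|_{X_i} = D$, so $D \sim 0$. The only non-formal part of the argument is the matching step, and thanks to the chain (as opposed to cycle) topology of $X$ it reduces to the successive solution of a triangular system in $\Cc^\ast$, which poses no genuine obstacle.
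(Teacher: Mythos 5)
Your proof is correct and takes essentially the same route as the paper: both perform an inductive rescaling of the component-wise trivializing functions, matching values at each node $P_i$ in order along the chain, and then glue to a global rational function whose divisor is $D$. The only cosmetic difference is that the paper phrases the induction as building up the partial curves $X^{(i)} = \bigcup_{j\le i} X_j$ one component at a time, while you fix all the rescaling constants $c_i$ first and then glue everything at once; these are the same triangular system in $\Cc^\ast$, solved in the same order.
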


\begin{proof}
Let $X^{(i)} := \bigcup_{j=1}^i X_j$ for $1 \le i \le m$, and $P_i := X_i \cap X_{i+1} = X^{(i)} \cap X_{i+1}$ for $1 \le i \le m-1$. We will prove by induction that $D|_{X^{(i)}} \sim 0$ for any $1\le i\le m$. Suppose that $D|_{X^{(i-1)}} \sim 0$ for some integer $i\ge2$. Then there exist a rational function $\alpha_{i-1}$ on $X^{(i-1)}$ and a rational function $\beta_i$ on $X_i$, such that $D|_{X^{(i-1)}} = (\alpha_{i-1})$ and $D|_{X_i} = (\beta_i)$. Since $P_{i-1}$ is not contained in the support of $D$, $\alpha_{i-1}$ and $\beta_i$ are non-zero regular functions near $P_{i-1}$. Replacing $\beta_i$ by $\frac{\alpha_{i-1}(P_{i-1})}{\beta_i(P_{i-1})} \beta_i$, we may assume that $\alpha_{i-1}(P_{i-1}) = \beta_i(P_{i-1})$. Then there exists a rational function $\alpha_i$ on $X^{(i)}$ such that $\alpha_i|_{X^{(i-1)}} = \alpha_{i-1}$ and $\alpha_i|_{X_i} = \beta_i$. Hence $D|_{X^{(i)}} = (\alpha_i)$, and thus $D|_{X^{(i)}} \sim 0$. Therefore by induction we see that $D \sim 0$.
\end{proof}

\begin{lem}\label{lem: bdd of complement for 2 curves}
Let $l$ be a positive integer. Then there exists a finite set of positive integers $\cN''$ divisible by $l$ depending only on $l$ satisfying the following.

Assume that $\{a_i\}_{i=1}^{k}$ and $\{b_i\}_{i=1}^{k}$ are two sequences of non-negative real numbers, such that $\sum_{i=1}^{k} a_i\le 1$ and $\sum_{i=1}^{k} b_i\le 1$. Then there exist positive integers $n\in\cN''$ and $k'\ge k$, and two sequences of non-negative real numbers $\{a_i^{+}\}_{i=1}^{k'}$ and $\{b_i^{+}\}_{i=1}^{k'}$, such that
\begin{enumerate}
  \item $\sum_{i=1}^{k'} a_i^{+}=\sum_{i=1}^{k'} b_i^{+}=1$, and
  \item $na_i^{+}\ge n\rddown{ a_i}+\rddown{(n+1)\{a_i\}}$ and $nb_i^{+}\ge n\rddown{ b_i}+\rddown{(n+1)\{b_i\}}$ for any $1\le i\le k$.
\end{enumerate}
\end{lem}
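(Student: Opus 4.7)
The plan is to realize the two finite sequences as the coefficients of an $\Rr$-complementary boundary on the disjoint union of two copies of $\Pp^1$, and then invoke Theorem \ref{thm:curcmpt} to obtain a single index $n$ that works for both simultaneously. I take $\cN''$ to be the finite set of positive integers divisible by $l$ furnished by Theorem \ref{thm:curcmpt} for the input $l$, so that $\cN''$ depends only on $l$.

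Given $\{a_i\}_{i=1}^k$ and $\{b_i\}_{i=1}^k$ with $\sum a_i\le 1$ and $\sum b_i\le 1$, let $X:=X_a\sqcup X_b$ be the disjoint union of two copies of $\Pp^1$ over $\Cc$, choose distinct closed points $P_a,Q_1,\ldots,Q_k\in X_a$ and $P_b,R_1,\ldots,R_k\in X_b$, and set
$$B:=P_a+\sum_{i=1}^k a_iQ_i+P_b+\sum_{i=1}^k b_iR_i.$$
All coefficients lie in $[0,1]$, and the restriction of $B$ to each component has degree $1+\sum a_i\le 2=\deg(-K_{\Pp^1})$ (similarly for $X_b$). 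Adding a single auxiliary point with the deficit coefficient on each component produces an lc divisor $B^0\ge B$ with $K_X+B^0\sim 0$, so $(X/\Spec\Cc\ni\mathrm{pt},B)$ is $\Rr$-complementary. Theorem \ref{thm:curcmpt} then yields some $n\in\cN''$ together with an $n$-complement $(X,B^+)$ of $(X,B)$.

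To extract the required data, I observe that the $n$-complement inequality at $P_a$ (where $B$ has coefficient $1$) forces $\mult_{P_a}B^+=1$, and similarly $\mult_{P_b}B^+=1$. Writing $B^+|_{X_a}=P_a+\sum_j a_j^+Q_j'$ and $B^+|_{X_b}=P_b+\sum_j b_j^+R_j'$, the relations $\deg B^+|_{X_a}=\deg B^+|_{X_b}=2$ give $\sum a_j^+=\sum b_j^+=1$. After re-indexing so that $Q_j'=Q_j$ and $R_j'=R_j$ for $1\le j\le k$, and padding both sides with zero-coefficient points to a common length $k'\ge k$, the restriction of the $n$-complement inequality at the original points reduces to $na_j^+\ge n\lfloor a_j\rfloor+\lfloor(n+1)\{a_j\}\rfloor$ and the analogous statement for $b$, which is exactly the conclusion of the lemma.

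The main obstacle this approach sidesteps is the need to glue two a priori different complementary indices into a single $n$: a naive strategy applying Theorem \ref{thm:curcmpt} separately to each $\Pp^1$ yields distinct $n_a,n_b\in\cN''$ in general, and scaling up to $\lcm(n_a,n_b)$ does not automatically preserve the complement inequality because the quantities $\lfloor(n+1)\{a_j\}\rfloor$ behave poorly under such scaling. Treating both projective lines together as one (reducible) curve pair produces a single $n\in\cN''$ valid for both components at once; this relies only on the fact that Theorem \ref{thm:curcmpt} applies to any $\Rr$-complementary pair on a normal quasi-projective curve, with no irreducibility requirement, and an $n$-complement of a disconnected pair is by definition a simultaneous $n$-complement on each connected component sharing the same index.
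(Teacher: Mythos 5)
The construction you propose does not fall within the scope of Theorem \ref{thm:curcmpt}. In this paper a pair lives over a contraction $\pi\colon X\to Z$ with $\pi_*\cO_X=\cO_Z$, and $X$ is a normal (hence irreducible) quasi-projective variety. For $X=\Pp^1\sqcup\Pp^1\to\Spec\Cc$ one has $\pi_*\cO_X\cong\Cc\oplus\Cc\neq\cO_Z$, so this is not a contraction and $(X,B)$ is not a curve pair to which Theorem \ref{thm:curcmpt} applies. Your closing remark that the theorem holds ``with no irreducibility requirement'' is precisely the claim that needs justification and is not supplied by the paper.

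More importantly, the proposal is essentially circular. If one could apply the boundedness of complements to the two components \emph{simultaneously} and obtain one $n\in\cN''$ working on both, that assertion \emph{is} the content of Lemma \ref{lem: bdd of complement for 2 curves}: the whole point of the lemma is to force a single $n$ for two unrelated degree-one data, since applying Theorem \ref{thm:curcmpt} componentwise produces $n_a,n_b$ that need not coincide, and (as you correctly note) passing to $\lcm(n_a,n_b)$ does not preserve $\lfloor(n+1)\{\cdot\}\rfloor$. In fact this lemma is an ingredient used in the proof of Theorem \ref{thm:sdltcurvescpt}, so any attempt to prove it by invoking boundedness of complements for a disconnected or reducible curve would be running in a circle.

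For the record, the paper's proof is purely arithmetic: using subadditivity $\lfloor(n+1)(c+d)\rfloor\ge\lfloor(n+1)c\rfloor+\lfloor(n+1)d\rfloor$ one reduces to $k=2$ with $a_1+a_2=b_1+b_2=1$; the inequality $n-\sum_i\lfloor(n+1)a_i\rfloor\ge0$ holds exactly when $(n+1)a_1\notin\Zz$; by Dirichlet one picks three primes $q_1,q_2,q_3$ with $l\mid q_j-1$ and sets $n_j=q_j-1$, and the pigeonhole principle (with $a_1,b_1\in(0,1)$) shows that for at least one $j$ neither $(n_j+1)a_1$ nor $(n_j+1)b_1$ is an integer. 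No geometry is needed. Your idea of packaging both sequences into a single boundary is natural, but it only buys something if the underlying boundedness theorem already produces a uniform $n$ across components, which is not given.
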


\begin{proof}
Without loss of generality, we may assume that $a_i,b_i<1$ for any $i$. Then it suffices to prove
\begin{equation}\label{eq: bdd of complement for 2 curves}
    n-\sum_{i=1}^k\rddown{(n+1)a_i}\ge0\text{ and }n-\sum_{i=1}^k\rddown{(n+1)b_i}\ge0.
\end{equation}

For any positive integer $n$ and non-negative real numbers $c,d$, we have
$$\lfloor(n+1)(c+d)\rfloor\ge \lfloor(n+1)c\rfloor+ \lfloor(n+1)d\rfloor.$$ 
Thus possibly replacing $(a_i,a_j)$ by $(a_i+a_j,0)$ (respectively $(b_i,b_j)$ by $(b_i+b_j,0)$), we may assume that $a_i+a_j\ge 1$ (respectively $b_i+b_j\ge 1$) for any $i\neq j$. In particular, we may assume that $k=2$ and $a_1+a_2=b_1+b_2=1$.

By Dirichlet prime number theorem, there exist three distinct prime numbers $q_j$ such that $l\mid q_j-1$ for any $j\in\{1,2,3\}$. Let $n_j:=q_j-1$, and  $\cN'':=\{n_1,n_2,n_3\}$. We claim that there exists $n\in\cN''$ satisfying \eqref{eq: bdd of complement for 2 curves}. It suffices to show that both $(n_j+1)a_1$ and $(n_j+1)b_1$ are not integers for some $j\in\{1,2,3\}$. Otherwise, by the pigeonhole principle, we may assume that $a_1\in \frac{1}{n_j+1}\Zz\cap[0,1)=\frac{1}{q_j}\Zz\cap[0,1)$ for two indices $j\in\{1,2,3\}$, which is absurd.
\end{proof}

\begin{prop}\label{prop: bdd of comp of sdlt curves of dlt surfaces}
Let $l$ be a positive integer. Then there exists a finite set of positive integers $\cN$ divisible by $l$ depending only on $l$ satisfying the following.

Assume that $(X/Z\ni z,B)$ is a surface pair such that $z$ is a closed point, $(X,B)$ is dlt, $S:=\rddown{B}\neq 0$, $B-S$ is big over $Z$ and $K_X+B\sim_{\Rr,Z}0$. Then over a neighborhood of $z$, and $(S,B_S)$ has an $n$-semi-complement for some $n\in\cN$, where $K_S+B_S:=(K_X+B)|_{S}$.
\end{prop}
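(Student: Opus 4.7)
The plan is to reduce to Theorem \ref{thm:sdltcurvescpt} by setting $\cN := \cN_{sdlt}$ (possibly enlarged by the finite set from Theorem \ref{thm:curcmpt}) and verifying the hypotheses of Theorem \ref{thm:sdltcurvescpt} for the semismooth curve $S$ after shrinking $Z$ near $z$. Since $(X, B)$ is dlt with $\lfloor B \rfloor = S$, the curve $S$ is semismooth, and by adjunction $(S, B_S)$ is sdlt with $K_S + B_S = (K_X + B)|_{S} \sim_{\Rr, Z} 0$. In particular, $-(K_S + B_S)$ restricts to zero on any complete component of $S$ (those contained in the fiber over $z$), verifying condition (5) of Theorem \ref{thm:sdltcurvescpt}.

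The key step is to use that $B - S$ is big over $Z$ to establish the remaining topological conditions. Writing $B - S \sim_{\Rr, Z} A + E$ with $A$ ample over $Z$ and $E \ge 0$ via Kodaira's lemma, and suitably perturbing $(X, B)$ to a log canonical pair whose non-klt locus is still $S$ and whose anti-log-canonical divisor is both big and nef over $Z$, we apply the Koll\'ar--Shokurov connectedness theorem to conclude that $S$ intersected with the fiber over $z$ is connected after shrinking $Z$ near $z$. Since $(X, B)$ is dlt, at most two branches of $S$ pass through any point; it follows that the union of complete components of $S$ is connected (condition (4)); that two distinct incomplete components cannot meet on a complete component, as this would force three branches of $S$ to be concurrent; and that any intersection of two incomplete components lying outside the fiber over $z$ can be removed by shrinking $Z$ to avoid the finitely many affected points (condition (3)). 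This also shows that, after shrinking, $S$ is connected as a whole and every remaining incomplete component meets the complete locus.

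When at least one complete component exists, Theorem \ref{thm:sdltcurvescpt} directly produces the desired $n$-semi-complement. The residual case in which $S$ has no complete component is handled separately: the connectedness forces $S$ intersected with the fiber over $z$ to be either empty or a single point meeting at most two incomplete arcs, so after further shrinking the problem reduces to producing a bounded complement on a smooth curve, which is supplied by Theorem \ref{thm:curcmpt}. Absorbing the resulting finite set into $\cN$ yields the desired conclusion.

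The main technical obstacle is the connectedness step, since the Koll\'ar--Shokurov theorem a priori requires $-(K_X + B)$ to be both nef and big over $Z$, while we only have $-(K_X + B) \sim_{\Rr, Z} 0$. The bigness of $B - S$ is exactly what enables the required perturbation, but ensuring that the perturbed pair remains log canonical with the same non-klt locus $S$, and that the resulting anti-log-canonical divisor is simultaneously nef and big over $Z$, requires combining Kodaira's lemma with a careful choice of perturbation and possibly running a relative MMP to gain nefness.
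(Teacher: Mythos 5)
Your overall reduction to Theorem~\ref{thm:sdltcurvescpt} (and Theorem~\ref{thm:curcmpt} for the case with no complete component) is the same as the paper's. The crucial step in both is showing that $S$ is connected over a neighborhood of $z$; where you diverge from the paper is precisely there, and that is where your argument has a genuine gap.

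You propose to use Kodaira's lemma and a perturbation to produce a pair $(X,B')$ with non-klt locus $S$ and $-(K_X+B')$ nef and big over $Z$, and then apply Koll\'ar--Shokurov connectedness. But the naive perturbation $B'=B-t(B-S)$ only makes $-(K_X+B')\sim_{\Rr,Z}t(B-S)$ big, not nef, and you acknowledge you would need to run a relative MMP to obtain nefness. Running an MMP on $-(K_X+B')$ over $Z$ yields a birational contraction $g\colon X\to X''$ that contracts no component of $S$ (since $(K_X+B')\cdot C=-t(B-S)\cdot C\le 0$ for $C\subseteq S$), but it can contract curves \emph{not} in $S$ that meet two distinct components or two distinct points of $S$; this creates new intersections of $S''=g_*S$ and can connect previously disjoint pieces of $S$. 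Thus connectedness of $S''$ over $z$, which is what the connectedness theorem applied on $X''$ actually gives, does \emph{not} imply connectedness of $S$ over $z$. (Pulling $-(K_{X''}+B'')$ back to $X$ does not help either: the crepant pair on $X$ has non-klt locus $g^{-1}(S'')$, which strictly contains $S$ in general.) So the perturbation strategy as stated does not yield the connectedness you need. The paper sidesteps this entirely by invoking the structural dichotomy behind connectedness (Prokhorov, \emph{Lectures on complements on log surfaces}, Props.~3.3.1--3.3.2, together with \cite[17.4 Theorem]{Kol92}): if $S$ were disconnected near $X_z$ there would be a $\Pp^1$-fibration $\phi\colon X\to T$ with every connected component of $S$ horizontal; but then $B-S$ big over $Z$ forces $(B-S)|_F\neq 0$ on a general fiber $F\cong\Pp^1$, contradicting $(K_X+B)|_F\sim_\Rr 0$. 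This is a cleaner route that needs no perturbation or MMP.

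A second, smaller gap is your handling of the case where two incomplete components $S_1,S_2$ of $S$ meet at a point $p\in X_z$. You claim this reduces "after further shrinking" to a smooth curve and Theorem~\ref{thm:curcmpt}, but if $p\in X_z$ the intersection cannot be removed by shrinking $Z$, and $S$ near $p$ is nodal, not normal, so Theorem~\ref{thm:curcmpt} does not apply directly. In the paper this case is resolved by observing that dlt-ness forces $B=S_1+S_2$ near $p$, hence $B_S=0$ and $K_S\sim 0$ over a neighborhood of $z$, giving a trivial $n$-semi-complement for every $n$. You would need an argument of that kind rather than an appeal to the complement theorem for normal curves.
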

\begin{proof}
	Let $\cN_{sdlt}$ and $\cN_1$ be finite sets of positive integers divisible by $l$ given by Theorem \ref{thm:sdltcurvescpt} and Theorem \ref{thm:curcmpt} respectively which only depend on $l$. We will show that $\cN:=\cN_{sdlt}\cup\cN_1$ has the required property. 
	
	It is clear that $S$ is a semismooth curve, and $(S,B_S)$ is sdlt. We first show that $S$ is connected over a neighborhood of $z$. Otherwise, there exists a contraction $\phi:X\to T$ to a curve $T$ such that the general fiber $F$ of $\phi$ is $\Pp^1$ and each connected component of $S$ is horizontal over $T$; see Shokurov's connectedness lemma \cite[17.4 Theorem]{Kol92} and \cite[Propositions 3.3.1 and 3.3.2]{Pro01} (see also \cite[5.7 Connectedness lemma]{Sho92}, \cite[Corollary 1.3]{HH19}). Note that if $\dim Z=1$, then we take $T=Z$. As $B-S$ is big over $Z$, $B-S$ is horizontal over $T$ and $(B-S)|_F\neq\emptyset$. It follows that $(K_X+B)|_F=(K_X+S+B-S)|_F\not\sim_\Rr0$, a contradiction. Thus $S$ is connected over a neighborhood of $z$. Possibly shrinking $Z$ near $z$, we may assume that $K_X+B\sim_\Rr0$ and thus $K_S+B_S$ is trivial on each complete component of $S$. If $S$ has two irreducible incomplete components $S_1$ and $S_2$ that $S_1\cap S_2\neq\emptyset$ over any neighborhood of $z$, then by assumption, we have $B=S=S_1+S_2$ over a neighborhood of $z$. In this case, $B_S=0$ and $K_S\sim0$ over a neighborhood of $z$. Now we assume that each irreducible incomplete component of $S$ does not meet any other irreducible incomplete component of $S$. By the classification of dlt surface pairs (cf. \cite[Corollary 5.55]{KM98}), over a neighborhood of $z$, either the support of $B_S$ lies in the union of the complete components of $S$ or $S$ is irreducible and its image on $Z$ is also a curve. In the former case, $(S,B_S)$ has an $n$-semi-complement in a neighborhood of the union of complete component of $S$ for some $n\in\cN_{sdlt}$ by Theorem \ref{thm:sdltcurvescpt}. Therefore over a neighborhood of $z$, $(S,B_S)$ has an $n$-semi-complement. In the latter case, the morphism from $S$ to its image on $Z$ is a contraction, then $(S,B_S)$ has an $n$-complement over a neighborhood of $z$ for some $n\in\cN_1$. This finishes the proof.
\end{proof}

\section{Boundedness of complements for surfaces}\label{sec5}
\subsection{Conjecture \ref{conj:bddcpt2} for surfaces}
In this subsection, we confirm Conjecture \ref{conj:bddcpt2} for surfaces. For convenience, by (Theorem $*$)$_d$ we mean Theorem $*$ in dimension $d$.

\medskip
\noindent{\bf Notation $(\star)$.}\label{notation}
Let $\Phi_1\subseteq[0,1]\cap\Qq$ be a hyperstandard set. Let $p=p(\Phi_1)$ be a positive integer given by (Proposition \ref{prop:cbfindex})$_2$ which only depends on $\Phi_1$. Let $\cN_2=\cN_2(p)$ be a finite set of positive integers divisible by $p$ given by Theorem \ref{thm:curcmpt} which only depends on $p$, and let $\Phi_2:=\Gamma(\cN_2,\Phi_1)$.

\begin{prop}\label{prop:21case}
Under {\bf Notation $(\star)$}, assume that $(X/Z,B)$ is a $\Qq$-factorial lc surface pair such that $K_X+B\sim_{\Rr,Z}0$ and $\kappa(X/Z,B-B_{\Phi_2})+\dim Z=\kappa(X/Z,B-B_{\Phi_1})+\dim Z=1$. Then $(X/Z\ni z,B)$ is $\cN_2$-complementary for any closed point $z\in Z$.
\end{prop}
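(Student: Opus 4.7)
The plan is to execute the two-step MMP and canonical-bundle-formula strategy sketched in the introduction. Fix a closed point $z\in Z$; shrinking $Z$ around $z$, we may assume $K_X+B\sim_\Rr 0$. First I would use Lemma \ref{lem:runantiMMP} to run an MMP $\psi:X\dashrightarrow X'$ on $-(K_X+B_{\Phi_2})$, terminating in a good minimal model with a semi-ample contraction $\pi':X'\to Z'$ over $Z$. The hypothesis $\kappa(X/Z,B-B_{\Phi_2})+\dim Z=1$ forces $\dim Z'=1$, and since $K_X+B\sim_{\Rr,Z}0$ every step of $\psi$ is $(K+B)$-trivial, so complements on $X'$ pull back to $X$ via Lemma \ref{lemma pullbaclcomplement}(2). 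Next, by Lemma \ref{lem:runantiMMP} applied to $(X'/Z',B')$ with coefficient set $\Phi_1$, I would run an MMP $X'\dashrightarrow X''$ over $Z'$ with $-(K_{X''}+B_{\Phi_1}'')$ semi-ample over $Z'$. Using MMP-invariance of the Iitaka dimension (Lemma \ref{lem:finitemmpiitdim}) together with the equality $\kappa(X/Z,B-B_{\Phi_1})=\kappa(X/Z,B-B_{\Phi_2})$, I would argue that the relative Iitaka dimension of $-(K_{X''}+B_{\Phi_1}'')$ over $Z'$ vanishes, so that the induced semi-ample contraction is the structure morphism $\pi'':X''\to Z'$ itself, and in particular $K_{X''}+B_{\Phi_1}''\sim_{\Rr,Z'}0$. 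Moreover every extremal ray contracted by the second MMP is vertical over $Z'$, hence $(K+B_{\Phi_2})$-trivial (since $K_{X'}+B_{\Phi_2}'\sim_{\Rr,Z'}0$); thus $(X',B_{\Phi_2}')$ and $(X'',B_{\Phi_2}'')$ are crepant over $Z'$.

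Next, apply Proposition \ref{prop:cbfindex} to $(X'',B_{\Phi_1}'')$ over the curve $Z'$. Since $\dim Z'=\dim X''-1$, this yields
\[ p(K_{X''}+B_{\Phi_1}'')\sim p(\pi'')^{\ast}(K_{Z'}+B_{Z'}^{(1)}+\bM_{Z'}) \]
with $p=p(\Phi_1)$ as in Notation $(\star)$ and $p\bM$ both b-Cartier and base-point-free over $Z$. Lemma \ref{lem:cbfbirinv}, applied to the chain $(X'',B_{\Phi_1}'')\le(X'',B_{\Phi_2}'')$ (both relatively trivial over $Z'$ by the previous paragraph), transfers the same $p$ and the same moduli $\bM$ to $(X'',B_{\Phi_2}'')$, producing a new discriminant $B_{Z'}^{(2)}$ that lies in a hyperstandard set $\Phi_2'\subseteq[0,1]\cap\Qq$ (identified by applying Proposition \ref{prop:cbfindex} directly with $\Phi=\Phi_2$). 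Crepantness finally transports this identity over to $(X',B_{\Phi_2}')$.

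For the last step I would descend to the smooth curve $Z'$. Base-point-freeness of $p\bM_{Z'}$ allows us to pick a general effective $\Qq$-divisor $M_{Z'}$ with $pM_{Z'}\sim_Z p\bM_{Z'}$ and $M_{Z'}\wedge B_{Z'}^{(2)}=0$; the coefficients of $M_{Z'}$ are then at most $1/p\le 1$, so $(Z'/Z\ni z,B_{Z'}^{(2)}+M_{Z'})$ is lc, and it is $\Rr$-complementary by descending an $\Rr$-complement of $(X',B_{\Phi_2}')$ through the canonical bundle formula. Theorem \ref{thm:curcmpt} then supplies an $n$-complement with $n\in\cN_2$ and $p\mid n$. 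Since $\dim Z'=1$, any prime divisor on $X'$ vertical over $Z'$ maps to a closed point of $Z'$ and therefore to a prime divisor, so hypothesis (1) of Proposition \ref{prop:cbflift1} holds trivially with $T'=Z'$. Lifting through the canonical bundle formula produces an $n$-complement of $(X'/Z\ni z,B_{\Phi_2}')$; Lemma \ref{lem:N_Phicompl}(1) upgrades this to an $n$-complement of $(X'/Z\ni z,B')$; and pulling back along $\psi$ via Lemma \ref{lemma pullbaclcomplement}(2) yields the required $n$-complement of $(X/Z\ni z,B)$.

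I expect the main technical obstacle to lie in paragraph one: deducing from the Iitaka-dimension hypothesis simultaneously that the semi-ample contraction of $-(K_{X''}+B_{\Phi_1}'')$ over $Z'$ is exactly $\pi''$ (so that $K_{X''}+B_{\Phi_1}''\sim_{\Rr,Z'}0$ and the canonical bundle formula applies over $Z'$), and that the second MMP is $(K+B_{\Phi_2})$-crepant. A secondary subtlety is the transfer of the canonical bundle formula via Lemma \ref{lem:cbfbirinv}, which is precisely the mechanism enabling the index $p$ to depend only on $\Phi_1$ rather than on $\Phi_2$.
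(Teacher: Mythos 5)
Your overall architecture matches the paper's: run the anti-MMP on $-(K_X+B_{\Phi_2})$ over $Z$ to reach $X'\to Z'$ with $\dim Z'=1$; run a second MMP over $Z'$ on $-(K_{X'}+B_{\Phi_1}')$ reaching $X''$; use the equality of Iitaka dimensions (plus a small $\epsilon$-perturbation, which the paper spells out via Lemma~\ref{lem:relamp} and you leave implicit) to identify $\pi''\colon X''\to Z'$ with the structure morphism, so that $K_{X''}+B_{\Phi_1}''\sim_{\Rr,Z'}0$; invoke Proposition~\ref{prop:cbfindex} for $(X'',B_{\Phi_1}'')/Z'$, transfer via Lemma~\ref{lem:cbfbirinv} to $B_{\Phi_2}''$, and use crepantness to descend to $X'$; construct an $\Rr$-complement of $(Z'/Z,B_{Z'}^{(2)}+M_{Z'})$ and apply Theorem~\ref{thm:curcmpt}. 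All of this is the paper's argument and is sound.

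There is, however, a genuine gap in your final lifting step. You apply Proposition~\ref{prop:cbflift1} with $\phi=\pi'\colon X'\to Z'$ and the pair $(X',B_{\Phi_2}')$, obtaining an $n$-complement $(X'/Z\ni z, B'^+)$ of $(X'/Z\ni z,B_{\Phi_2}')$, upgrade it to an $n$-complement of $(X'/Z\ni z,B')$ via Lemma~\ref{lem:N_Phicompl}(1), and then pull it back along $\psi$ using Lemma~\ref{lemma pullbaclcomplement}(2). But Lemma~\ref{lemma pullbaclcomplement}(2) requires a \emph{monotonic} $n$-complement of $(X'/Z\ni z,B')$, i.e.\ $B'^+\ge B'$. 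Neither the curve-level $n$-complement of $(Z',B_{Z'}^{(2)}+M_{Z'})$ supplied by Theorem~\ref{thm:curcmpt} nor the one produced by Proposition~\ref{prop:cbflift1} is monotonic in general: the construction in Proposition~\ref{prop:cbflift1} sets $B'^+=B_{\Phi_2}'+(\pi')^*(B_{Z'}^{(2)+}-B_{Z'}^{(2)})$, and $B_{Z'}^{(2)+}-B_{Z'}^{(2)}$ can be negative at some point of $Z'$. Without $B'^+\ge B'$ the crepant pull-up along $\psi$ can have negative coefficients on $\psi$-exceptional divisors, and the conclusion of Lemma~\ref{lemma pullbaclcomplement}(2) is not available.

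The paper avoids this by never performing a pullback step at all. Since $\dim X=2$, the MMP $\psi\colon X\to X'$ is a \emph{morphism} (a composition of divisorial contractions, no flips), so $\pi'\circ\psi\colon X\to Z'$ is itself a contraction. The paper then applies Proposition~\ref{prop:cbflift1} directly to $\pi'\circ\psi$ and the pair $(X,B^{(2)})$, where $K_X+B^{(2)}:=\psi^*(K_{X'}+B_{\Phi_2}')$; since $\psi$ is $-(K_X+B_{\Phi_2})$-negative one gets $B^{(2)}\ge B_{\Phi_2}$. The resulting $n$-complement lives on $X$ from the outset, is automatically an $n$-complement of $(X/Z\ni z,B_{\Phi_2})$ because $t\mapsto n\lfloor t\rfloor+\lfloor(n+1)\{t\}\rfloor$ is nondecreasing, and then Lemma~\ref{lem:N_Phicompl}(1) finishes. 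You should reroute your last paragraph along these lines: take $T=T'=Z'$, $\phi=\pi'\circ\psi$, and the pair $(X,B^{(2)})$ in Proposition~\ref{prop:cbflift1}, rather than lifting on $X'$ and then pulling back.
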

\begin{proof}
By Lemma \ref{lem:runantiMMP} we can run an MMP on $-(K_X+B_{\Phi_2})\sim_{\Rr,Z}B-B_{\Phi_2}$ over $Z$ and reach a good minimal model $\psi:X\to X'$ over $Z$, such that $-(K_{X'}+B_{\Phi_2}')$ is semi-ample over $Z$, where $D'$ denotes the strict transform of $D$ on $X'$ for any $\Rr$-divisor $D$ on $X$. Let $\pi':X'\to Z'$ be the contraction defined by $-(K_{X'}+B_{\Phi_2}')$ over $Z$. By assumption, $\dim Z'=1$. Let $B_{Z'}^{(2)}$ and $\bM_{\pi'}$ be the discriminant and moduli parts of the canonical bundle formula for $(X',B_{\Phi_{2}}')$ over $Z'$ in Proposition \ref{prop:cbfindex}.
\begin{claim}\label{claim:lift1}
$p\bM_{\pi'}$ is base point free over $Z$ and 
$$p\left(K_{X'}+B_{\Phi_{2}}'\right)\sim p(\pi')^*\left(K_{Z'}+B_{Z'}^{(2)}+\bM_{\pi',Z'}\right).$$
\end{claim}

Assume Claim \ref{claim:lift1}. Then
$$p\left(K_{X}+B^{(2)}\right):=p\psi^*\left(K_{X'}+B'_{\Phi_2}\right)\sim p\left(\pi'\circ\psi\right)^{*}\left(K_{Z'}+B_{Z'}^{(2)}+\bM_{\pi',Z'}\right).$$
Note that since $\psi$ is $-(K_X+B_{\Phi_2})$-negative, $B^{(2)}\ge B_{\Phi_2}$. Since $K_{X'}+B'\sim_{\Rr,Z}0$ and $B\ge B_{\Phi_2}$, there exists a boundary $B_{Z'}$ on $Z'$ such that the g-pair $(Z',B_{Z'}+\bM_{\pi'})$ is glc, $B_{Z'}\ge B_{Z'}^{(2)}$, and $K_{Z'}+B_{Z'}+\bM_{\pi',Z'}\sim_{\Rr,Z}0$. As $p\bM_{\pi'}$ is base point free over $Z$, we can pick an effective $\Qq$-divisor $M_{Z'}$ on $Z'$ such that 
$$pM_{Z'}\sim_Z p\bM_{\pi',Z'}, M_{Z'}\wedge B_{Z'}=0\text{, and }(Z',B_{Z'}+M_{Z'})\text{ is lc}.$$ 
In particular, $(Z'/Z\ni z,B_{Z'}+M_{Z'})$ is an $\Rr$-complement of $(Z'/Z\ni z,B_{Z'}^{(2)}+M_{Z'})$ for any $z\in Z$. Now by our choice of $\cN_2$, $(Z'/Z\ni z,B_{Z'}^{(2)}+M_{Z'})$ is $\cN_2$-complementary. According to Proposition \ref{prop:cbflift1}, $(X/Z\ni z,B^{(2)})$ is $\cN_2$-complementary, and hence $(X/Z\ni z,B_{\Phi_2})$ is also $\cN_2$-complementary as $B^{(2)}\ge B_{\Phi_2}$. Thus $(X/Z\ni z,B)$ is $\cN_2$-complementary by Lemma \ref{lem:N_Phicompl}. Therefore it suffices to proof Claim \ref{claim:lift1}.

\begin{proof}[Proof of Claim \ref{claim:lift1}]
According to Lemma \ref{lem:runantiMMP} again, we may run an MMP on $-(K_{X'}+B'_{\Phi_1})\sim_{\Rr,Z'}B'_{\Phi_{2}}-B'_{\Phi_{1}}$ over $Z'$ and reach a good minimal model $X'\to X''$ over $Z'$, such that $B''_{\Phi_{2}}-B''_{\Phi_{1}}$ is semi-ample over $Z'$, where $D''$ denotes the strict transform of $D'$ on $X''$ for any $\Rr$-divisor $D'$ on $X'$. One can pick a positive real number $\epsilon$, such that $g:X\to X''$ is also an MMP on $B-B_{\Phi_{2}}+\epsilon (B_{\Phi_{2}}-B_{\Phi_{1}})$ over $Z$. Furthermore, we may assume that $B''-B_{\Phi_{2}}''+\epsilon (B_{\Phi_{2}}''-B_{\Phi_{1}}'')$ is semi-ample over $Z$ by Lemma \ref{lem:relamp}.
\begin{center}
	\begin{tikzcd}[column sep = 2em, row sep = 2em]
		X \arrow[d,"",swap]\arrow[rr, "",]  && X' \arrow[d, "\pi'" swap] \arrow[rr, ] && X'' \arrow[d, "{\pi''}" swap] \\
		Z && \arrow[ll, ""] Z'  && \arrow[ll, "id", swap]Z'
	\end{tikzcd}
\end{center}
By assumption, 
$$\kappa(X/Z,B-B_{\Phi_{2}})=\kappa(X/Z,B-B_{\Phi_{2}}+\epsilon (B_{\Phi_{2}}-B_{\Phi_{1}})),$$
and $B''-B''_{\Phi_{2}}+\epsilon (B''_{\Phi_{2}}-B''_{\Phi_{1}})\sim_{\Rr,Z'}\epsilon(B''_{\Phi_{2}}-B^{''}_{\Phi_{1}})$. Hence the natural morphism $\pi'':X''\to Z'$ is the contraction defined by $B''_{\Phi_{2}}-B^{''}_{\Phi_{1}}$ over $Z'$. In particular, we have 
$$K_{X''}+B''_{\Phi_{1}}\sim_{\Rr,Z'}0\text{, and }K_{X''}+B''_{\Phi_{2}}\sim_{\Rr,Z'}0.$$

By Lemma \ref{lem:cbfbirinv} and Proposition \ref{prop:cbfindex}, we see that $p\bM_{\pi'}$ is base point free, and 
$$p(K_{X''}+B_{\Phi_2}'')\sim p(\pi'')^{*}\left(K_{Z'}+B_{Z'}^{(2)}+\bM_{\pi',Z'}\right).$$
Since $X' \to X''$ is $(K_{X'}+B_{\Phi_2}')$-trivial, $(X',B_{\Phi_2}')$ and $(X'',B_{\Phi_2}'')$ are crepant. Therefore
$$p(K_{X'}+B_{\Phi_2}')\sim p(\pi')^{*}\left(K_{Z'}+B_{Z'}^{(2)}+\bM_{\pi',Z'}\right).$$
We complete the proof.
\end{proof}
\end{proof}

\begin{thm}\label{thm:surfcmpt2}
Let $l$ be a positive integer and $\Phi\subseteq[0,1]\cap\Qq$ a hyperstandard set. Then there exists a finite set of positive integers $\cN$ divisible by $l$ depending only on $l$ and $\Phi$ satisfying the following. 

Assume that $(X/Z\ni z,B)$ is a surface pair such that $(X/Z\ni z,B_{\cN\_\Phi})$ has a klt $\Rr$-complement. Then $(X/Z\ni z,B)$ is $\cN$-complementary.
\end{thm}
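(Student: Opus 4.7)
The plan is to build $\cN$ as the union of three finite sets handling the three possible values of the relative Iitaka dimension $\kappa(X/Z, B - B_{\Phi'})$ for a succession of enlarged hyperstandard sets $\Phi'$, following the scheme laid out in the introduction for Theorem \ref{thm:surfcmpt}. After replacing $z$ with a closed specialization, passing to a $\Qq$-factorial dlt modification of a klt $\Rr$-complement $B^+$ of $(X/Z \ni z, B_{\cN\_\Phi})$, and using Lemma \ref{lemma pullbaclcomplement} to transport complements back, I may reduce to the case where $(X/Z \ni z, B_{\cN\_\Phi})$ is a $\Qq$-factorial lc surface pair admitting a klt $\Rr$-complement. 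By Lemma \ref{lem:N_Phicompl}(1), it suffices to produce an $\cN$-complement of $(X/Z \ni z, B_{\cN\_\Phi})$.

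First, Theorem \ref{thm:ftbdd} supplies a finite set $\cN_1$ divisible by $l$, depending only on $l$ and $\Phi$, such that whenever $X$ is of Fano type over $Z$ the pair $(X/Z \ni z, B)$ is $\cN_1$-complementary. Set $\Phi_1 := \Gamma(\cN_1, \Phi)$, let $p = p(\Phi_1)$ and $\cN_2 = \cN_2(p)$ be as in Notation $(\star)$ (enlarging $\cN_2$ so that $l$ divides every element), and put $\Phi_2 := \Gamma(\cN_1 \cup \cN_2, \Phi) = \Gamma(\cN_2, \Phi_1)$ via Remark \ref{rem:phiset}(3). I claim that if $X$ is not of Fano type over $Z$, then $\kappa(X/Z, B - B_{\Phi_1}) \le 1$: by Lemma \ref{lem:runantiMMP}, $-(K_X + B_{\Phi_1})$ has a good minimal model over $Z$ on which its pushforward is semi-ample, and if its Iitaka dimension equaled $2$ then the pushforward would be big and semi-ample; combined with the klt $\Rr$-complement $B^+$, this would force $X$ to be of Fano type over $Z$ by the standard criterion, a contradiction.

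In the sub-case $\kappa(X/Z, B - B_{\Phi_1}) = \kappa(X/Z, B - B_{\Phi_2}) = 1$, Proposition \ref{prop:21case} applied to a suitable crepant log Calabi-Yau model yields directly that $(X/Z \ni z, B)$ is $\cN_2$-complementary. In the remaining sub-case $\kappa(X/Z, B - B_{\Phi_2}) = 0$, running an MMP on $-(K_X + B_{\Phi_2})$ over $Z$ terminates by Lemma \ref{lem:runantiMMP} with a good minimal model, and by Lemma \ref{lem:finitemmpiitdim} the Iitaka dimension of $B - B_{\Phi_2}$ is preserved; since it equals zero and the strict transform of $B - B_{\Phi_2}$ becomes semi-ample on the minimal model, it must vanish there. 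Thus the pushforward is a $\Qq$-factorial dlt log Calabi-Yau surface pair of Iitaka type zero, and Lemma \ref{lem:global} supplies a positive integer $n_0$, depending only on $\Phi_2$ and arranged to be divisible by $l$, with $n_0(K_X + B) \sim 0$ over a neighborhood of $z$. Setting $\cN := \cN_1 \cup \cN_2 \cup \{n_0\}$ then satisfies the theorem.

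The main obstacle is the intermediate $\kappa = 1$ step, where one must arrange that the moduli part of the canonical bundle formula for the Iitaka fibration descends to a base point free divisor over $Z$, and that components of $\Supp B$ vertical over the base of the Iitaka fibration correspond to prime divisors on an appropriate birational model, so that Proposition \ref{prop:cbflift1} can lift an $n$-complement from the base back to $X$. These are precisely the technical issues packaged inside Proposition \ref{prop:21case}, so my plan invokes it as a black box; the remaining effort lies in cleanly verifying the Fano-type versus Iitaka dimension dichotomy in the relative setting and the compatibility of the various Iitaka dimensions under the intermediate MMPs.
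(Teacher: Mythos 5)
Your decomposition follows the paper's scheme and invokes the right constants and propositions, but you split cases on $\kappa(X/Z, B - B_{\Phi_i})$ alone, whereas Proposition~\ref{prop:21case} and the paper's own argument are formulated in terms of $\kappa(X/Z, B - B_{\Phi_i}) + \dim Z$. These agree only when $\dim Z = 0$. When $\dim Z = 1$, already $\kappa(X/Z, B - B_{\Phi_1}) = 1$ makes $B - B_{\Phi_1}$ big over $Z$ and hence $X$ of Fano type over $Z$; your ``not Fano type $\Rightarrow \kappa \le 1$'' claim only rules out $\kappa = 2$ and does not see this.

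The gap bites concretely in your last sub-case: when $\dim Z = 1$ and $\kappa(X/Z, B - B_{\Phi_2}) = 0$, you enter the Lemma~\ref{lem:global} branch and assert that the strict transform of $B - B_{\Phi_2}$ vanishes on the good minimal model. That is false: a semi-ample effective $\Rr$-divisor of relative Iitaka dimension $0$ over a curve is vertical but need not vanish, and in any event Lemma~\ref{lem:global} is a statement about projective pairs, so it simply does not apply. In the paper this case is routed through Proposition~\ref{prop:21case}, precisely because $\kappa(X/Z, B - B_{\Phi_2}) + \dim Z = 1$ there. The repair is to split on $\kappa(X/Z, B - B_{\Phi_i}) + \dim Z \in \{0, 1, 2\}$ throughout, recovering the paper's three-case decomposition; with that change the rest of your argument matches the paper's proof.
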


\begin{proof}
Let $\cN_1=\cN_1(l,\Phi)$ be a finite set of positive integers divisible by $l$ given by (Theorem \ref{thm:ftbdd})$_2$ which only depends on $l$ and $\Phi$, and let $\Phi_1:=\Ii(\cN_1,\Phi)$. Let $p=p(l,\Phi_1)$ be a positive integer divisible by $l$ given by (Proposition \ref{prop:cbfindex})$_2$ which only depends on $l$ and $\Phi_1$. Let $\cN_2=\cN_2(p)$ be a finite set of positive integers divisible by $p$ given by Theorem \ref{thm:curcmpt} which only depends on $p$, and let $\Phi_2:=\Gamma(\cN_1\cup\cN_2,\Phi)$. Let $n_{CY}=n_{CY}(l, \Phi_2)$ be a positive integer divisible by $l$ given by Lemma \ref{lem:global} which only depends on $l$ and $\Phi_2$. We will show that the finite set $\cN:=\cN_1\cup\cN_2\cup\{n_{CY}\}$ has the required property.

Possibly replacing $z$ by a closed point of $\bar{z}$, we may assume that $z$ is a closed point. Suppose that $(X/Z\ni z,B^+)$ is a klt $\Rr$-complement of $(X/Z\ni z,B_{\cN\_\Phi})$. Possibly replacing $(X,B)$ by a small $\Qq$-factorialization of $(X,B^+)$ and shrinking $Z$ near $z$, we may assume that $(X,B)$ is $\Qq$-factorial klt and $K_X+B\sim_{\Rr,Z}0$. Since $B\ge B_{\Phi_2}\ge B_{\Phi_1}$,
$$0\le\kappa(X/Z,B-B_{\Phi_2})+\dim Z\le\kappa(X/Z,B-B_{\Phi_1})+\dim Z\le2.$$
Therefore, we only need to consider the following three cases:
\begin{enumerate}
  \item $\kappa(X/Z,B-B_{\Phi_1})+\dim Z=2$,
  \item $\kappa(X/Z,B-B_{\Phi_2})+\dim Z=\kappa(X/Z,B-B_{\Phi_1})+\dim Z=1$, and
  \item $\kappa(X/Z,B-B_{\Phi_2})+\dim Z=0$.
\end{enumerate}

If $\kappa(X/Z,B-B_{\Phi_1})+\dim Z=2$, then $X$ is of Fano type over $Z$. In this case $(X/Z\ni z,B)$ is $\cN_1$-complementary by the choice of $\cN_1$ (see Theorem \ref{thm:ftbdd}). If $\kappa(X/Z,B-B_{\Phi_2})+\dim Z=\kappa(X/Z,B-B_{\Phi_1})+\dim Z=1$, then $(X/Z\ni z,B)$ is $\cN_2$-complementary by Proposition \ref{prop:21case}. If $\kappa(X/Z,B-B_{\Phi_2})+\dim Z=0$, that is, $\dim Z=0$ and $\kappa(X,B-B_{\Phi_2})=0$, then one has
$$n_{CY}(K_X+B)\sim0$$
by the choice of $n_{CY}$ (see Lemma \ref{lem:global}). We finish the proof.
\end{proof}

\subsection{Proof of Theorem \ref{thm:surfcmpt}}

\begin{prop}[cf. {\cite[16.7 Corollary]{Kol92}}]\label{prop:adjcoeff}
Let $\fR\subseteq[0,1]\cap\Qq$ be a finite set and $\Phi:=\Phi(\fR)$. Then there exists a hyperstandard set $\tilde{\Phi}$ depending only on $\Phi$ satisfying the following.

Assume that $(X,B)$ is a dlt pair and $S:=\lf B\rf$. Let $K_S+B_S:=(K_X+B)|_S$. If $B\in\Phi$, then $B_S\in\tilde{\Phi}$, and if $B\in\Gamma(\{n\},\Phi)$ for some positive integer $n$, then $B_S\in\Ii(\{n\},\tilde{\Phi})$.
\end{prop}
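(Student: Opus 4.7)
My plan is to reduce Proposition \ref{prop:adjcoeff} to the classical Shokurov--Kollár adjunction result for hyperstandard coefficient sets. The fundamental tool I will use is the explicit form of the different (see \cite[16.6]{Kol92}): for a dlt pair $(X,B)$ with $S=\lf B\rf$ and $D$ an irreducible component of $S$, the coefficient of $B_S$ along $D$ has the shape
\[
\mult_D B_S \;=\; \frac{m-1+\sum_i k_i b_i}{m},
\]
where $m\in\Zz_{>0}$ is determined by a local Cartier index, $k_i\in\Zz_{\ge 0}$, and the $b_i$ are the coefficients of $B$ at prime divisors meeting $D$.

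For the first assertion, I would invoke the standard fact (\cite[16.7]{Kol92}) that for a finite set $\fR\subseteq[0,1]\cap\Qq$ there is a finite set $\tilde{\fR}$ depending only on $\fR$ such that every element of $[0,1]$ produced by the above formula from coefficients in $\Phi(\fR)$ lies in $\Phi(\tilde{\fR})$. Setting $\tilde{\Phi}:=\Phi(\tilde{\fR})$ yields the first claim.

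For the second assertion, by Remark \ref{rem:phiset}(2) we have $\Gamma(\{n\},\Phi)=\Phi(\fR^{(n)})$ with $\fR^{(n)}:=\{r-m/(n+1):r\in\fR,\,m\in\Zz_{\ge0}\}\cap[0,1]$, which is finite. I would then substitute $b_i=1-r_i/l_i+m_i/(l_i(n+1))$ (with $r_i\in\fR$) into the adjunction formula and split the result as
\[
\mult_D B_S \;=\; \underbrace{\frac{m-1+\sum_i k_i(1-r_i/l_i)}{m}}_{=:\,\alpha} \;+\; \underbrace{\frac{1}{m(n+1)}\sum_i\frac{k_i m_i}{l_i}}_{=:\,\beta}.
\]
The summand $\alpha$ is precisely the output of adjunction applied to the auxiliary coefficients $1-r_i/l_i\in\Phi(\fR)$, so by the first assertion $\alpha=1-\tilde r/\tilde l$ for some $\tilde r\in\tilde{\fR}$ and $\tilde l\in\Zz_{>0}$.

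The main obstacle is combining $\alpha$ and $\beta$ into the form $1-\tilde r/\tilde l+M/(\tilde l(n+1))$ required for membership in $\Gamma(\{n\},\tilde{\Phi})$, since a priori the denominators of $\alpha$ and $\beta$ are different. I would address this by revisiting Kollár's proof of 16.7 with careful bookkeeping, arranging so that $\tilde l$ is a divisor of $m\cdot\lcm(l_i)$; then $\beta$ rewrites as $M/(\tilde l(n+1))$ with $M=\tilde l\sum_i k_i m_i/(m l_i)\in\Zz_{\ge 0}$. If necessary, I would enlarge $\tilde{\fR}$ to absorb residual denominator factors, exploiting that the boundedness of $\sum k_i$ (which follows from $\sum k_i b_i\le 1$ and the fact that $\Phi(\fR)\setminus\{0\}$ is bounded away from $0$) caps the complexity of the expressions that appear. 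This denominator reconciliation is the real technical step, but once it is carried out, $\mult_D B_S=1-\tilde r/\tilde l+M/(\tilde l(n+1))\in\Gamma(\{n\},\tilde{\Phi})$ and both assertions follow with the same $\tilde{\Phi}$.
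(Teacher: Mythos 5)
Your approach to the first assertion is fine and matches the paper: cite \cite[16.7 Corollary]{Kol92}. For the second assertion you have a genuine gap, and you correctly identify where it is: after splitting $\mult_D B_S = \alpha + \beta$ and writing $\alpha = 1 - \tilde r/\tilde l$ via the first assertion, you have no control over $\tilde l$ relative to $m\cdot\lcm(l_i)$, so you cannot actually recombine $\alpha$ and $\beta$ into the form $1 - \tilde r/\tilde l + M/(\tilde l(n+1))$. Your proposed remedy (reopen Koll\'ar's proof, do denominator bookkeeping, enlarge $\tilde{\fR}$) is left unverified and is not obviously carryable-out without committing to a particular presentation of the adjunction formula.

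The irony is that you already have the clean route in hand and then abandon it. You note via Remark \ref{rem:phiset}(2) that $\Gamma(\{n\},\Phi) = \Phi(\fR^{(n)})$ with $\fR^{(n)} := \{r - m/(n+1) \ge 0 : r \in \fR, m \in \Zz_{\ge 0}\}$, a \emph{finite} set. At that point, do not go back to the adjunction formula at all: apply \cite[16.7 Corollary]{Kol92} as a black box to the hyperstandard set $\Phi(\fR^{(n)})$. This yields $B_S \in \Phi(\tilde{\fR}^{(n)})$ where, by the explicit recipe of 16.7,
\begin{align*}
\tilde{\fR}^{(n)} &:= \left\{1 - \sum_i (1-r_i') \ge 0 \mid r_i' \in \fR^{(n)}\right\}
= \left\{1 - \sum_i(1-r_i) - \frac{m}{n+1} \ge 0 \mid r_i \in \fR, m \in \Zz_{\ge 0}\right\}.
\end{align*}
Now a direct check on finite sets of rationals (using $1-\sum(1-r_i) \le 1$ and the nonnegativity constraint) shows $\tilde{\fR}^{(n)} = \{\tilde r - m/(n+1) \ge 0 : \tilde r \in \tilde{\fR}, m \in \Zz_{\ge 0}\}$, which by Remark \ref{rem:phiset}(2) is exactly the generating set for $\Gamma(\{n\},\tilde{\Phi})$. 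Hence $\Phi(\tilde{\fR}^{(n)}) = \Gamma(\{n\},\tilde{\Phi})$ and the second assertion follows with the same $\tilde{\Phi}$. This is precisely what the paper does; there is no denominator reconciliation to be done because the identity is proved at the level of the generating finite sets, not at the level of individual coefficients appearing in the different.
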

\begin{proof}
Let
$$\tilde{\fR}:=\left\{1-\sum(1-r_i)\ge0\mid r_i\in\fR\right\},\ \fR_1:=\left\{r-\frac{m}{n+1}\ge0\mid r\in\fR,m\in\Zz_{\ge0}\right\},$$
and
\begin{align*}
\tilde{\fR}_1:&=\left\{1-\sum(1-r_i')\ge0\mid r_i'\in\fR_1\right\}
\\&=\left\{1-\sum(1-r_i)-\frac{m}{n+1}\ge0\mid r_i\in\fR,m\in\Zz_{\ge0}\right\}.
\end{align*}
Let $\tilde{\Phi}:=\Phi(\tilde{\fR})$ and $\tilde{\Phi}_1:=\Phi(\tilde\fR_1)$. It is clear that $\Phi(\fR_1)=\Ii(\{n\},\Phi)$ and $\tilde{\Phi}_1=\Ii(\{n\},\tilde{\Phi})$.
By \cite[16.7 Corollary]{Kol92}, if $B\in\Phi$, then $B_{S}\in\tilde{\Phi}$, and if $B\in\Gamma(\{n\},\Phi)$, then $B_S\in\Ii(\{n\},\tilde{\Phi})$. Therefore $\tilde{\Phi}$ has the required property.
\end{proof}

\begin{prop}\label{prop:adjlift}
Let $(X/Z\ni z, B)$ be a surface pair such that $(X,B)$ is $\Qq$-factorial dlt and $-(K_{X}+B)$ is nef and big over a neighborhood of $z$. Let $S:=\lfloor B\rfloor$ and  $K_{S}+B_{S}:=(K_X+B)|_S$. Suppose that $S$ intersects $X_z$, the fiber of $X\to Z$ over $z$, and $(S,B_{S})$ has a monotonic $n$-semi-complement $(S,B_{S}^{+})$ over a neighborhood of $z$. Then $(X/Z\ni z, B)$ is $n$-complementary.
\end{prop}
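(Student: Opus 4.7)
The plan is to lift the monotonic $n$-semi-complement $(S, B_S^+)$ to an $n$-complement of $(X,B)$ over a neighborhood of $z$ via Kawamata-Viehweg vanishing, in the spirit of Shokurov's classical complement-lifting. First, encode $B_S^+$ as the effective integer Weil divisor
$$D_S := nB_S^+ - n\lfloor B_S\rfloor - \lfloor(n+1)\{B_S\}\rfloor \ge 0$$
on $S$. The semi-complement identity $n(K_S + B_S^+) \sim 0$ gives
$$D_S \sim -nK_S - n\lfloor B_S\rfloor - \lfloor(n+1)\{B_S\}\rfloor$$
over a neighborhood of $z$. On $X$ consider the $\mathbb{Q}$-Cartier $\mathbb{Q}$-divisor $P := -nK_X - n\lfloor B\rfloor - \lfloor(n+1)\{B\}\rfloor$, which is well-defined since $X$ is $\mathbb{Q}$-factorial. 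Using adjunction $(K_X+B)|_S = K_S + B_S$ for the dlt pair $(X,B)$, one checks that $P|_S$ is linearly equivalent to the class represented by $D_S$, producing a distinguished section of $\mathcal{O}_S(P|_S)$ over a neighborhood of $z$.

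Next, the short exact sequence
$$0 \to \mathcal{O}_X(P-S) \to \mathcal{O}_X(P) \to \mathcal{O}_S(P|_S) \to 0$$
reduces lifting to the vanishing $R^1\pi_\ast\mathcal{O}_X(P-S) = 0$ near $z$, where $\pi: X\to Z$ denotes the structure morphism. Setting $\Gamma := \{(n+1)\{B\}\}$, which has coefficients in $[0,1)$ and support disjoint from $S$, the pair $(X,\Gamma)$ is klt because $(X,B)$ is dlt. A direct computation shows
$$(P-S) - (K_X + \Gamma) \sim_{\mathbb{Q}} -(n+1)(K_X + B),$$
which is $\mathbb{Q}$-Cartier and nef and big over $Z$ near $z$ by hypothesis. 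Relative Kawamata-Viehweg vanishing (applied on a log resolution if necessary) yields the required vanishing, producing an effective Weil divisor $D \sim P$ on $X$ with $D|_S = D_S$ over a neighborhood of $z$.

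Finally, set $B^+ := \lfloor B\rfloor + \frac{1}{n}(\lfloor(n+1)\{B\}\rfloor + D)$. By construction $nB^+ \ge n\lfloor B\rfloor + \lfloor(n+1)\{B\}\rfloor$ and $n(K_X + B^+) \sim 0$ over a neighborhood of $z$, so conditions (2) and (3) of the definition of $n$-complement hold. The principal remaining obstacle is to verify that $(X, B^+)$ is lc over a neighborhood of $z$. Here $(K_X + B^+)|_S \sim_{\mathbb{Q}} K_S + B_S^+$ via adjunction, and the semi-complement hypothesis forces $(S, B_S^+)$ to be sdlt, hence lc; inversion of adjunction for dlt surface pairs then gives that $(X, B^+)$ is lc in a neighborhood of $S$. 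Since $S$ meets $X_z$ by assumption, combined with a connectedness argument exploiting the nefness and bigness of $-(K_X+B)$ near $z$, this extends to a neighborhood of the whole fiber $X_z$, completing the verification and hence the proof.
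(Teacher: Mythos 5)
Your proposal follows essentially the same strategy as the paper's (Shokurov-style lifting via Kawamata--Viehweg vanishing, followed by inversion of adjunction and the connectedness principle), but there is a genuine gap in the vanishing step.

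You assert that the pair $(X,\Gamma)$ with $\Gamma := \{(n+1)\{B\}\}$ is klt ``because $(X,B)$ is dlt.'' This is not true in general. The coefficients of $\Gamma$ are not bounded by those of $\{B\}$: if a component of $\{B\}$ has coefficient $b$, the corresponding coefficient of $\Gamma$ is $\{(n+1)b\}$, which can be much larger than $b$. At a klt (but non-smooth) point of $X$ away from $S$ where several components of $\{B\}$ meet, the pair $(X,\Gamma)$ can therefore fail to be klt. For instance, if $X$ has a $\frac{1}{m}(1,1)$ singularity at a point $p$ with $a(E,X,0)=\frac{2}{m}$ for the exceptional divisor $E$ of its minimal resolution, and three components of $\{B\}$ pass smoothly through $p$ with coefficient $0.3$ each, then $(X,\{B\})$ is klt there (since $3\cdot 0.3 < 2$); but for $n=2$ the corresponding coefficients of $\Gamma$ are all $\{0.9\}=0.9$, and $3\cdot 0.9 = 2.7 > 2$, so $a(E,X,\Gamma)<0$. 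Since the hypothesis of Kawamata--Viehweg vanishing on the singular $X$ is precisely klt-ness of $(X,\Gamma)$, your vanishing $R^1\pi_*\mathcal{O}_X(P-S)=0$ is not justified. The parenthetical ``(applied on a log resolution if necessary)'' does not repair this, because the entire structure of your argument --- the definition of $P$, the short exact sequence $0 \to \mathcal{O}_X(P-S) \to \mathcal{O}_X(P) \to \mathcal{O}_S(P|_S) \to 0$, and the restriction to $S$ --- lives on the singular $X$, where the exact sequence itself already requires care since $S$ and $P$ need not be Cartier. The paper's proof works throughout on a log resolution $g:W\to X$ chosen to be an isomorphism over the snc locus of $(X,B)$: there $W$ is smooth, $S_W\cong S$, the fractional part of $-(n+1)(K_W+B_W)$ has snc support with coefficients in $[0,1)$, the relevant exact sequence is automatic, and Kawamata--Viehweg vanishing applies cleanly; the divisor $G_W$ is lifted on $W$ and only then pushed forward to $X$. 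Passing to the resolution is thus the essential content of the argument, not a technicality.

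A smaller remark on your last paragraph: the connectedness step needs $-(K_X + aB^+ + (1-a)B)$ nef and big over $Z$, which holds for $a<1$ since $-(K_X+B^+)$ is merely $\Qq$-trivial (hence nef, not big) over $Z$. The paper applies Shokurov--Koll\'ar connectedness to the non-klt locus $S\cup V$ of $(X, aB^+ + (1-a)B)$, where $V$ is the non-lc locus of $(X,B^+)$, and concludes $V\cap X_z = \emptyset$ using that $S$ and $V$ are disjoint (from inversion of adjunction) and $S$ meets $X_z$. Your sketch of this step is directionally correct but omits the necessary perturbation.
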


\begin{proof}
Possibly replacing $z$ by a closed point of $\bar{z}$ and shrinking $Z$ near $z$, we may assume that $z$ is a closed point, $(X,B)$ is $\Qq$-factorial dlt, $-(K_{X}+B)$ is nef and big over $Z$, and $n(K_S+B_S^+)\sim0$.

Let $g:W\to X$ be a log resolution of $(X,B)$ such that $g$ is an isomorphism over the snc locus of $(X,B)$ (cf. \cite[Theorem 10.45]{Kol13}), and let $S_W$ be the strict transform of $S$ on $W$. Then the induced morphism $g_{S_W}:=g|_{S_W}: S_W\to S$ is an isomorphism. We define
$$K_W+B_W:=g^{*}(K_X+B),\ n\left(K_{S_W}+B_{S_W}^{+}\right):=g_{S_W}^{*}\left(n\left(K_S+B_S^{+}\right)\right)\sim 0,$$
and 
$$L_W:=\lceil -(n+1)(K_W+B_W)\rceil.$$
Let $\Delta_W:=B_W-S_W$. Then
$$K_{S_W}+B_{S_W}:=(K_W+B_W)|_{S_W}=K_{S_W}+\Delta_W|_{S_W}=g_{S_W}^{*}(K_S+B_S),$$
and $B_{S_W}<1$ as $\Delta_W<1$.
	
Since $-(n+1)(K_W+B_W)$ is nef and big over $Z$,  $R^1h_{*}(\mathcal{O}_{W}(K_W+L_W))=0$ by the relative Kawamata-Viehweg vanishing theorem for $\Rr$-divisors (cf. \cite[Theorem 3.2.9]{Fuj17}), where $h$ is the induced morphism $W\to Z$. From the exact sequence
$$0\to\mathcal{O}_W(K_W+L_W)\to \mathcal{O}_W(K_W+S_W+L_W)\to \mathcal{O}_{S_W}(K_{S_W}+L_W|_{S_W})\to 0,$$
we deduce that the induced map
$$H^0(W,K_W+S_W+L_W)\rightarrow H^0(S_W,K_{S_W}+L_W|_{S_W})$$
is surjective. Since $nB_{S_W}^{+}\in\Zz$, $B_{S_W}<1$, and $B_{S_W}^{+}-B_{S_W}\ge0$, we see that 
$$G_{S_W}:=nB_{S_W}^{+}-\rddown{(n+1)B_{S_W}}$$ 
is an effective integral divisor. We have
\begin{align*}
K_{S_W}+L_W|_{S_W}&=K_{S_W}+\lceil -(n+1)(K_{S_W}+B_{S_W})\rceil\\
&=-nK_{S_W}-\rddown{(n+1)B_{S_W}}\sim G_{S_W}\ge0.
\end{align*}
Thus there exists $G_W\ge0$ on $W$ such that $G_W|_{S_W}=G_{S_W}$ and
$$G_W\sim K_W+S_W+L_W= -nK_W-nS_W-\lf(n+1)\Delta_W\rf.$$ 
Let $G:=g_{*}G_W$, and 
$$B^{+}:=S+\frac{1}{n}(\rddown{(n+1)\{B\}}+G).$$ 
Then we have 
$$n(K_X+B^{+})= n(K_X+S)+\rddown{(n+1)\{B\}}+G\sim 0.$$

It remains to show that $(X,B^{+})$ is lc over a neighborhood of $z$. Let $V$ be the non-lc locus of $(X,B^{+})$. There exists a real number $a\in(0,1)$, such that the non-klt locus of $(X,aB^{+}+(1-a)B)$ is equal to $S\cup V$.

Since $g^{*}(K_X+B^+)|_{S_W}={g_{S_W}}^{*}(K_S+B_S^+)$, we have $(K_X+B^{+})|_S=K_{S}+B_{S}^{+}$ and $(K_X+aB^{+}+(1-a)B)|_{S}=K_S+aB_S^{+}+(1-a)B_S$. By inversion of adjunction (cf. \cite[Theorem 1.4]{FH21}), $(X,aB^{+}+(1-a)B)$ is lc near $S$. In particular, $S$ is disjoint from $V$. Since 
$$-(K_X+aB^{+}+(1-a)B)=-a(K_X+B^{+})-(1-a)(K_X+B)$$
is nef and big over $Z$, by Shokurov-Koll\'ar connectedness principle (cf. \cite[17.4 Theorem]{Kol92}), $(S\cup  V)\cap X_z$ is connected. Recall that by assumption, $S\cap X_z\neq\emptyset$. Hence $V\cap X_z=\emptyset$ and $(X,B^{+})$ is lc over a neighborhood of $z$.
\end{proof}

\begin{thm}\label{thm:surfcmpt3}
Let $l$ be a positive integer and $\Phi\subseteq[0,1]\cap\Qq$ a hyperstandard set. Then there exists a finite set of positive integers $\cN$ divisible by $l$ depending only on $l$ and $\Phi$ satisfying the following. 

Assume that $(X/Z\ni z,B)$ is a surface pair such that $(X/Z\ni z,B_{\cN\_\Phi})$ is $\Rr$-complementary. Then $(X/Z\ni z,B)$ is $\cN$-complementary.
\end{thm}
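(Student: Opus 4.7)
The plan is to reduce Theorem \ref{thm:surfcmpt3} to Theorem \ref{thm:surfcmpt2} (which handles the klt $\Rr$-complementary case) by analyzing a $\Qq$-factorial dlt modification of a chosen $\Rr$-complement and, when that complement has genuine lc places, lifting complements from the sdlt curve of lc places via Propositions \ref{prop:adjcoeff}, \ref{prop: bdd of comp of sdlt curves of dlt surfaces}, and \ref{prop:adjlift}. The set $\cN$ will be built as $\cN:=\cN_1\cup\cN_2$ (suitably enlarged so that every element is divisible by $l$), where $\cN_1$ is supplied by Theorem \ref{thm:surfcmpt2} applied to $(l,\Phi)$ and $\cN_2$ by Proposition \ref{prop: bdd of comp of sdlt curves of dlt surfaces} applied to $l$; the adjunction step will be controlled through the hyperstandard enlargement $\tilde\Phi$ of Proposition \ref{prop:adjcoeff} and its $\Gamma(\{n\},\cdot)$-variant from Lemma \ref{lem:N_Phicompl}(2).

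First I assume $z$ is a closed point, fix an $\Rr$-complement $(X,B^+)$ of $(X,B_{\cN\_\Phi})$, and take a $\Qq$-factorial dlt modification $\pi:(Y,B_Y^+)\to(X,B^+)$ with reduced $\pi$-exceptional divisor $E$. Setting $B_Y:=\pi_*^{-1}B+E$ yields $B_Y\le B_Y^+$; because $\pi$-exceptional primes appear in $B_Y$ with coefficient $1$, any $n$-complement of $(Y/Z\ni z,B_Y)$ pushes forward under $\pi_*$ to an $n$-complement of $(X/Z\ni z,B)$. If $(X,B^+)$ is klt over a neighborhood of $z$, then $(X/Z\ni z,B_{\cN\_\Phi})$ has a klt $\Rr$-complement and Theorem \ref{thm:surfcmpt2} directly yields $\cN_1$-complementarity. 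Otherwise $\lfloor B_Y^+\rfloor$ is nonempty over every neighborhood of $z$ and meets $Y_z$, and the remaining task is to produce an $n$-complement of $(Y/Z\ni z,B_Y)$ for some $n\in\cN_2$.

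In that non-klt situation I mimic the structure of the proof of Theorem \ref{thm:surfcmpt2}: by Lemma \ref{lem:runantiMMP} applied to the dlt pair $(Y,B_Y^+)$ we run a $(-(K_Y+B_Y))$-MMP over $Z$ (equivalently an MMP on $B_Y^+-B_Y\ge 0$) and reach a good minimal model $Y'$. We then split into three sub-cases by the relative Iitaka dimension $\kappa:=\kappa(Y'/Z,-(K_{Y'}+B_{Y'}))+\dim Z\in\{0,1,2\}$. For $\kappa=2$, $-(K_{Y'}+B_{Y'})$ is nef and big over $Z$; applying Proposition \ref{prop: bdd of comp of sdlt curves of dlt surfaces} to the dlt pair $(Y',B_{Y'}^+)$ produces an $n$-semi-complement on $\lfloor B_{Y'}^+\rfloor$, which via Proposition \ref{prop:adjcoeff} and Lemma \ref{lem:N_Phicompl}(2) I convert into a monotonic $n$-semi-complement of the adjunction pair $(\lfloor B_{Y'}\rfloor,(K_{Y'}+B_{Y'})|_{\lfloor B_{Y'}\rfloor})$; Proposition \ref{prop:adjlift} then lifts this to an $n$-complement of $(Y'/Z\ni z,B_{Y'})$, and Lemma \ref{lemma pullbaclcomplement}(2) followed by $\pi_*$ produces the desired $n$-complement on $X$. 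For $\kappa=1$, the Iitaka fibration of $-(K_{Y'}+B_{Y'})$ factors through a contraction to a curve, and the canonical-bundle-formula argument of Proposition \ref{prop:21case} together with Theorem \ref{thm:curcmpt} on the base and the lifting result Proposition \ref{prop:cbflift1} supplies the complement. For $\kappa=0$ one has $\dim Z=0$ and $B_{Y'}^+=B_{Y'}$ on the minimal model, so $K_{Y'}+B_{Y'}\sim_\Rr 0$ and Lemma \ref{lem:global} provides a fixed $n$ with $n(K_{Y'}+B_{Y'})\sim 0$.

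The main obstacle is the $\kappa=2$ sub-case, where Proposition \ref{prop: bdd of comp of sdlt curves of dlt surfaces} naturally produces a semi-complement on $\lfloor B_{Y'}^+\rfloor$ adjoined to $K_{Y'}+B_{Y'}^+$, whereas Proposition \ref{prop:adjlift} demands a \emph{monotonic} semi-complement on the possibly smaller $\lfloor B_{Y'}\rfloor$ adjoined to $K_{Y'}+B_{Y'}$. Bridging this gap requires careful coefficient bookkeeping through the hyperstandard sets $\tilde\Phi$ and $\Gamma(\{n\},\tilde\Phi)$, the monotonicity upgrade of Lemma \ref{lem:N_Phicompl}(2), and a precise comparison of the two round-down divisors $\lfloor B_{Y'}\rfloor\subseteq\lfloor B_{Y'}^+\rfloor$; verifying the bigness hypothesis $B_{Y'}^+-\lfloor B_{Y'}^+\rfloor$ big needed in Proposition \ref{prop: bdd of comp of sdlt curves of dlt surfaces} (rather than merely $B_{Y'}^+-B_{Y'}$ big) is another technical point that requires attention.
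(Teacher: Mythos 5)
There is a genuine gap at the very start of your argument: you set $B_Y:=\pi_*^{-1}B+E$ and assert $B_Y\le B_Y^+$, but this is false in general. An $\Rr$-complement $(X,B^+)$ of $(X,B_{\cN\_\Phi})$ is only required to satisfy $B^+\ge B_{\cN\_\Phi}$, and since $B_{\cN\_\Phi}\le B$ this gives no control of $B^+$ against $B$ itself: a component of $\Supp B$ with coefficient $b\notin\Gamma(\cN,\Phi)$ can have $\mult B^+=b_{\cN\_\Phi}<b$. Consequently $B_Y^+-B_Y$ need not be effective, your identification of the $-(K_Y+B_Y)$-MMP with an MMP on an effective divisor breaks down, and Lemma \ref{lem:runantiMMP} is not applicable because there is no reason for $(Y/Z\ni z,B_Y)$ to be an $\Rr$-complement of itself. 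The paper avoids this by replacing $B$ itself by the boundary of a $\Qq$-factorial dlt modification of the non-klt complement $(X,B_{\cN\_\Phi}+G)$, so that the new $B$ satisfies $K_X+B\sim_{\Rr,Z}0$ and $\lfloor B\rfloor\ne 0$; the original statement is then recovered via Lemma \ref{lem:N_Phicompl}(1) and the monotonicity of $b\mapsto\lfloor b\rfloor+\tfrac{1}{n}\lfloor(n+1)\{b\}\rfloor$. (When $(X,B_{\cN\_\Phi})$ instead has a klt $\Rr$-complement, Theorem \ref{thm:surfcmpt2} does the job, as you say.)

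The second issue is that the obstacle you flag in the $\kappa=2$ sub-case is not mere bookkeeping; it is left genuinely unresolved in your sketch, and in fact it only resolves cleanly after the replacement above. With $K_X+B\sim_{\Rr,Z}0$ the paper cases on $\kappa(X/Z,B-B_{\Phi_i})+\dim Z$ \emph{before} running any MMP. In the $\kappa=2$ case, Proposition \ref{prop: bdd of comp of sdlt curves of dlt surfaces} is applied directly to $(X,B)$ with $S:=\lfloor B\rfloor$ -- the required bigness of $B-S$ follows because $B-B_\Phi$ is big and $S\le B_\Phi$ -- yielding an $n$-semi-complement of $(S,B_S)$ for some $n\in\cN_1$, which Lemma \ref{lem:N_Phicompl}(2) upgrades to a monotonic $n$-semi-complement of $(S,(B_S)_{n\_\tilde\Phi})$. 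Only then does one run an MMP on $-(K_X+B_{n\_\Phi})$ (with $n$ already chosen, so the truncated coefficient set is under control), show $\psi|_S:S\to S'$ is an isomorphism, and use the negativity lemma together with Proposition \ref{prop:adjcoeff} to verify $(B'_S)_{n\_\tilde\Phi}\ge B'_{n\_\Phi,S'}$, so Proposition \ref{prop:adjlift} applies to $(X'/Z\ni z,B'_{n\_\Phi})$. In your version, running the MMP first on the wrong divisor and keeping the original $B$ leaves you with $\lfloor B_{Y'}\rfloor$ possibly strictly inside $\lfloor B_{Y'}^+\rfloor$ and with no mechanism to convert a semi-complement on the larger curve into a monotonic one on the smaller curve adjoined to the other log canonical divisor; this is precisely the step the paper's reordering and replacement are designed to make possible.
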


\begin{proof}
Let $\tilde{\Phi}:=\Phi(\tilde{\fR})$ be the hyperstandard set associated to the finite set $\tilde{\fR}\subseteq[0,1]\cap\Qq$ given by Proposition \ref{prop:adjcoeff} which only depends on $\Phi$. Possibly replacing $l$ by a multiple, we may assume that $l\tilde{\fR}\subseteq\Zz_{\ge0}$. Let $\cN_0=\cN_0(l, \Phi)$ be a finite set of positive integers divisible by $l$ given by Theorem \ref{thm:surfcmpt2} which only depends on $l$ and $\Phi$. Let $\cN_1=\cN_1(l)$ be a finite set of positive integers divisible by $l$ given by Proposition \ref{prop: bdd of comp of sdlt curves of dlt surfaces} which only depends on $l$, and let $\Phi_1:=\Gamma(\cN_1,\Phi)$. Let $p=p(l, \Phi_1)$ be a positive integer divisible by $l$ given by (Proposition \ref{prop:cbfindex})$_2$ which only depends on $l$ and $\Phi_1$. Let $\cN_2=\cN_2(p)$ be a finite set of positive integers divisible by $p$ given by Theorem \ref{thm:curcmpt} which only depends on $p$, and let $\Phi_2:=\Gamma(\cN_1\cup\cN_2,\Phi)$. Let $n_{CY} = n_{CY}(l, \Phi_2)$ be a positive integer divisible by $l$ given by Lemma \ref{lem:global} which only depends on $l$ and $\Phi_2$. We will show that the finite set $\cN:=\cN_0\cup\cN_1\cup\cN_2\cup\{n_{CY}\}$ has the required property.

Possibly replacing $z$ by a closed point of $\bar{z}$, we may assume that $z$ is a closed point. If $(X/Z\ni z,B_{\cN\_\Phi})$ has a klt $\Rr$-complement, then so does $(X/Z\ni z,B_{{\cN_0}\_\Phi})$, and hence $(X/Z\ni z,B)$ is $\cN_0$-complementary by the choice of $\cN_0$. Therefore we may assume that $(X/Z\ni z,B_{\cN\_\Phi})$ has an $\Rr$-complement $(X/Z\ni z,B_{\cN\_\Phi}+G)$ which is not klt. Possibly replacing $(X/Z\ni z,B)$ by a $\Qq$-factorial dlt modification of $(X/Z\ni z,B_{\cN\_\Phi}+G)$ and shrinking $Z$ near $z$, we may assume that $(X,B)$ is $\Qq$-factorial dlt, $K_X+B\sim_{\Rr,Z}0$, and $S\cap X_z\neq\emptyset$, where $S:=\rddown{B}\neq 0$ and $X_z$ is the fiber of $X\to Z$ over $z$. Since $B\ge B_{\Phi_2}\ge B_{\Phi_1}\ge B_{\Phi}$, we have
\begin{align*}
0&\le\kappa(X/Z,B-B_{\Phi_2})+\dim Z\le\kappa(X/Z,B-B_{\Phi_1})+\dim Z
\\&\le\kappa(X/Z,B-B_{\Phi})+\dim Z\le2.
\end{align*}
Therefore, we only need to consider the following three possibilities:
\begin{enumerate}
  \item $\kappa(X/Z,B-B_{\Phi_2})+\dim Z=0$,
  \item $\kappa(X/Z,B-B_{\Phi_2})+\dim Z=\kappa(X/Z,B-B_{\Phi_1})+\dim Z=1$, and
  \item $\kappa(X/Z,B-B_{\Phi_1})+\dim Z=\kappa(X/Z,B-B_{\Phi})+\dim Z=2$.
\end{enumerate}

If $\kappa(X/Z,B-B_{\Phi_2})+\dim Z=0$, then $n_{CY}(K_X+B)\sim0$ by our choice of $n_{CY}$. If $\kappa(X/Z,B-B_{\Phi_2})+\dim Z=\kappa(X/Z,B-B_{\Phi_1})+\dim Z=1$, then $(X/Z\ni z,B)$ is $\cN_2$-complementary by Proposition \ref{prop:21case}. Hence in what follows we assume that $\kappa(X/Z,B-B_{\Phi_1})+\dim Z=\kappa(X/Z,B-B_{\Phi})+\dim Z=2$. We will show that $(X/Z\ni z,B)$ is $\cN_1$-complementary.

In this case, both $B-B_{\Phi}$ and $B-B_{\Phi_{1}}$ are big over $Z$. Let $K_S+B_S:=(K_X+B)|_{S}\sim_{\Rr,Z}0$. By Lemma \ref{lem:N_Phicompl} and the choice of $\cN_1$, $(S,(B_S)_{n\_\tilde{\Phi}})$ has a monotonic $n$-semi-complement over a neighborhood of $z$ for some $n\in \cN_1$. Note that $B_{n\_\Phi}\in\Gamma(\{n\},\Phi)\subseteq \Gamma(\cN_1,\Phi)$, $B_{n\_\Phi}\le B_{\Phi_{1}}$, and $B-B_{n\_\Phi}$ is big over $Z$. According to Lemma \ref{lem:runantiMMP}, we may run an MMP on $-(K_{X}+B_{n\_\Phi})\sim_{\Rr,Z}B-B_{n\_\Phi}$ over $Z$ and reach a minimal model $\psi:X\to X'$ over $Z$, such that $B'-B'_{n\_\Phi}$ is nef and big over $Z$, where $D'$ denotes the strict transform of $D$ on $X'$ for any $\Rr$-divisor $D$ on $X$. No component of $S$ is contracted by $\psi$ and $\psi_S := \psi|_{S}:S\to S'$ is an isomorphism as $S\le B_{n\_\Phi}\le B$ and $\psi$ is $(K_X+B)$-trivial.

Since $-(K_X+B)-\psi^{*}(-(K_{X'}+B'_{n\_\Phi}))$ is nef over $X'$, and $-B'+B'_{n\_\Phi}\le 0$, by the negativity lemma, $-(K_X+B)\le \psi^{*}(-(K_{X'}+B'_{n\_\Phi}))$. Let
$$K_{S'}+B'_{n\_\Phi,S'}:=\left(K_{X'}+B'_{n\_\Phi}\right)|_{S'}.$$ 
Note that $B'_{n\_\Phi,S'}\in\Gamma(\{n\},\tilde{\Phi})$ by Proposition \ref{prop:adjcoeff}, and the support of $-(K_X+B)- \psi^{*}(-(K_{X'}+B'_{n\_\Phi}))$ does not contain any component of $S$. Then
\begin{align*}
-(K_S+B_S)- \psi_{S}^{*}\left(-\left(K_{S'}+B'_{n\_\Phi,S'}\right)\right)
=\left(-(K_X+B)- \psi^{*}\left(-\left(K_{X'}+B'_{n\_\Phi}\right)\right)\right)|_{S}\le 0.
\end{align*}
Let $B_S'$ be the strict transform of $B_S$ on $S'$. Since $(B_S')_{n\_\tilde{\Phi}},B'_{n\_\Phi,S'}\in\Gamma(\{n\},\tilde{\Phi})$, and $B_S'\ge B'_{n\_\Phi,S'}$, we deduce that $(B'_S)_{n\_\tilde{\Phi}}\ge B'_{n\_\Phi,S'}$. Hence $(S',B'_{n\_\Phi,S'})$ has a monotonic $n$-semi-complement over a neighborhood of $z$. By Proposition \ref{prop:adjlift} and Lemma \ref{lem:N_Phicompl}, $(X'/Z\ni z,B'_{n\_\Phi})$ has a monotonic $n$-complement. Since $\psi$ is $-(K_X+B_{n\_\Phi})$-negative, $(X/Z\ni z,B_{n\_\Phi})$ has a monotonic $n$-complement $(X/Z\ni z,B^{+})$. By Lemma \ref{lem:N_Phicompl}, $(X/Z\ni z,B^{+})$ is an $n$-complement of $(X/Z\ni z,B)$. 
\end{proof}
\begin{proof}[Proof of Theorem \ref{thm:surfcmpt}]
Possibly replacing $(X/Z\ni z,B)$ with a dlt modification, we may assume $X$ is $\Qq$-factorial. Then the theorem follows by Theorem \ref{thm:surfcmpt3}.
\end{proof}

\section{Boundedness of complements for threefolds}\label{sec6}
We will prove the following theorem which is stronger than Theorem \ref{thm:3foldsbddcmpt}.
\begin{thm}\label{thm:3foldsbddcmpt2}
Let $l$ be a positive integer and $\Phi\subseteq[0,1]\cap\Qq$ a hyperstandard set. Then there exists a finite set of positive integers $\cN$ divisible by $l$ depending only on $l$ and $\Phi$ satisfying the following.

Assume that $(X/Z\ni z,B)$ is a threefold pair such that $(X/Z\ni z,B_{\cN\_\Phi})$ has a klt $\Rr$-complement. Then $(X/Z\ni z,B)$ is $\cN$-complementary.
\end{thm}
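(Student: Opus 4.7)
The plan is to execute the four-stage inductive scheme outlined in the introduction. In order, I construct finite sets $\cN_1, \cN_2, \cN_3$ of positive integers divisible by $l$, a positive integer $n_{CY}$, and hyperstandard sets $\Phi_i := \Gamma(\cN_1 \cup \cdots \cup \cN_i, \Phi)$ for $i=1,2,3$ as follows: take $\cN_1$ from Theorem \ref{thm:ftbdd} applied to $(l,\Phi)$; let $p_1$ be the integer from Proposition \ref{prop:cbfindex} applied to $\Phi_1$, and take $\cN_2$ from the surface result Theorem \ref{thm:surfcmpt2} applied to $(lp_1,\Phi_1')$, where $\Phi_1'$ is the base hyperstandard set furnished by Proposition \ref{prop:cbfindex}; let $p_2$ be the integer from Proposition \ref{prop:bsa} applied to $\Phi_2$, and take $\cN_3$ from Theorem \ref{thm:surfcmpt2} and Theorem \ref{thm:curcmpt} with $lp_2\mid n$ for every $n\in\cN_3$; finally take $n_{CY}$ from Lemma \ref{lem:global} for $\Phi_3$. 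Set $\cN := \cN_1 \cup \cN_2 \cup \cN_3 \cup \{n_{CY}\}$.

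After replacing $z$ by a closed point of $\bar{z}$ and $(X,B)$ by a small $\Qq$-factorialization of the given klt $\Rr$-complement, I may assume $(X,B)$ is $\Qq$-factorial klt with $K_X + B \sim_{\Rr, Z} 0$; by Lemma \ref{lemma pullbaclcomplement} it suffices to exhibit an $n$-complement on this new model. Since $B_{\Phi_1} \le B_{\Phi_2} \le B_{\Phi_3} \le B$, the chain $\kappa(X/Z, B-B_{\Phi_1}) \ge \kappa(X/Z, B-B_{\Phi_2}) \ge \kappa(X/Z, B-B_{\Phi_3}) \ge 0$ holds, and $\kappa(X/Z, B-B_{\Phi_i}) + \dim Z$ takes values in $\{0,1,2,3\}$. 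The top and bottom strata are immediate: if this value equals $3$ for $i=1$, then $-(K_X + B_{\Phi_1})$ is big over $Z$, so $X$ is of Fano type over $Z$ and Theorem \ref{thm:ftbdd} yields an $\cN_1$-complement; if it equals $0$ for $i=3$, then $Z$ is a point and $\kappa(X, B-B_{\Phi_3}) = 0$, so Lemma \ref{lem:global} delivers $n_{CY}(K_X+B) \sim 0$.

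In each of the two intermediate strata I run an anti-MMP and lift complements through a canonical bundle formula. In the stratum $\kappa(X/Z, B-B_{\Phi_1}) + \dim Z = \kappa(X/Z, B-B_{\Phi_2}) + \dim Z = 2$, Lemma \ref{lem:runantiMMP} permits a $-(K_X + B_{\Phi_2})$-MMP over $Z$ to a good minimal model whose semiample morphism is a contraction $\pi': X' \to Z'$ over $Z$ with $\dim Z' = \dim Z + 2$, i.e.\ of relative dimension one, so Proposition \ref{prop:cbfindex} gives a moduli part with $p_1 \bM_{\pi'}$ base point free over $Z$. A secondary $-(K_{X'} + B'_{\Phi_1})$-MMP over $Z'$, combined with Lemma \ref{lem:cbfbirinv}, promotes the $\Qq$-linear canonical bundle formula to a $\Zz$-linear one, exactly as in the proof of Proposition \ref{prop:21case}. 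Using a crepant model from Lemma \ref{lem:cbfcptoverbase}(1), I pick a general effective $M_{Z'}$ with $p_1 M_{Z'} \sim_Z p_1 \bM_{\pi', Z'}$ and $M_{Z'} \wedge B_{Z'}^{(2)} = 0$; then $(Z', B_{Z'}^{(2)} + M_{Z'})$ is an $\Rr$-complementary surface pair with a klt $\Rr$-complement, so Theorem \ref{thm:surfcmpt2} produces an $\cN_2$-complement, and Proposition \ref{prop:cbflift1} lifts it to $X$. The parallel stratum $\kappa(X/Z, B-B_{\Phi_2}) + \dim Z = \kappa(X/Z, B-B_{\Phi_3}) + \dim Z = 1$ is treated identically, now running a $-(K_X + B_{\Phi_3})$-MMP to reach $\pi': X' \to Z'$ with $\dim Z' = \dim Z + 1$; depending on whether $\dim Z = 1$ or $\dim Z = 0$, the relative dimension is one (invoking Proposition \ref{prop:cbfindex}) or two (invoking Proposition \ref{prop:bsa}), and the complement to be lifted is produced by Theorem \ref{thm:surfcmpt2} if $Z'$ is a surface or Theorem \ref{thm:curcmpt} if $Z'$ is a curve.

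The main obstacle is the relative-dimension-two case. Two technical difficulties compound here: first, the effective adjunction conjecture is not known in relative dimension $\ge 2$, so base-point-freeness of $p_2 \bM_{\pi'}$ has to be extracted from the specific threefold klt structure via Proposition \ref{prop:bsa}; second, components of $\Supp B$ may be vertical over $Z'$ with images of codimension $\ge 2$, contributing nothing to the discriminant divisor on $Z'$, so a naive lifting would fail to account for their coefficients in the complement. It is precisely to cope with this phenomenon that Proposition \ref{prop:cbflift1} is formulated on a crepant model $T' \to T$ satisfying its hypothesis (1), supplied by Lemma \ref{lem:cbfcptoverbase}(1), guaranteeing each such vertical component descends to a prime divisor on $T'$. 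Threading these canonical bundle formulas through the stratification while tracking divisibility $(l\mid p_1\mid \cN_2,\ l\mid p_2\mid \cN_3)$, so that every lifted complement remains monotonic through the successive constructions, is the technical heart of the proof.
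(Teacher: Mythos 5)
Your proposal follows the paper's four-stage scheme faithfully: the constants $\cN_1$, $p_1$, $\cN_2$, $p_2$, $\cN_3$, $n_{CY}$ and the hyperstandard sets $\Phi_i := \Gamma(\cN_1\cup\cdots\cup\cN_i,\Phi)$ are constructed as in the paper, the stratification by the relative Iitaka dimension of $B-B_{\Phi_i}$ is the same (your explicit four strata are equivalent to the paper's ``there exist $i,k\in\{1,2\}$'' pigeonhole), and the use of Proposition \ref{prop:cbfindex} / Proposition \ref{prop:bsa} for the canonical bundle formula together with Proposition \ref{prop:cbflift1} and Lemma \ref{lem:cbfcptoverbase}(1) is correct in spirit. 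But there are two issues.

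First, the dimension bookkeeping is wrong. From the hypothesis $\kappa(X/Z,B-B_{\Phi_{i+1}})+\dim Z=k$, the base of the ample model satisfies $\dim Z'=k$, not $\dim Z'=\dim Z+k$. In particular, in your second intermediate stratum one always has $\dim Z'=1$, so the relative dimension is always two; the parenthetical case distinction ``depending on whether $\dim Z=1$ or $\dim Z=0$'' and the invocation of Proposition \ref{prop:cbfindex} there are spurious, and only Proposition \ref{prop:bsa} and Theorem \ref{thm:curcmpt} are actually needed in that stratum. This is harmless in the end (you invoke the right statements in the cases that occur, plus some that never do), but as written the formula would confuse a reader and is inconsistent with your definition of $p_2$ solely from Proposition \ref{prop:bsa}.

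Second, and more substantively, the claim that ``Proposition \ref{prop:cbflift1} lifts it to $X$'' conceals a genuine gap. Proposition \ref{prop:cbflift1} produces an $n$-complement on the model carrying the contraction to $Z'$, namely $X'$ (or a crepant model of $(X',B'_{\Phi_{i+1}})$). This complement need not be monotonic, so Lemma \ref{lemma pullbaclcomplement}(2) does not transfer it back along the anti-MMP $X\dashto X'$; worse, prime divisors $P$ of $X$ contracted by that MMP can carry nonzero coefficient in $B_{\Phi_{i+1}}$ while the complement on $X'$ assigns their strict transform on $X$ coefficient zero, and $n(K_X+\cdot)\sim 0$ can fail. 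The paper fixes this by observing that every such $P$ satisfies $a(P,X',B'_{\Phi_{i+1}})<a(P,X,B_{\Phi_{i+1}})\le 1$, so one can extract all of them and obtain a crepant model $(\tilde{X},\tilde{B}^{(i+1)})$ of $(X',B'_{\Phi_{i+1}})$ which is isomorphic to $X$ in codimension one; Proposition \ref{prop:cbflift1} is then applied to $(\tilde{X},\tilde{B}^{(i+1)})$, and the inequality $\tilde{B}^{(i+1)}\ge B_{\Phi_{i+1}}$ together with the codimension-one isomorphism yields the $n$-complement of $(X/Z\ni z,B_{\Phi_{i+1}})$. This $\tilde{X}$ step is essential and is missing from your proposal.
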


\begin{proof}
Let $\cN_1 = \cN_1(l, \Phi)$ be a finite set of positive integers divisible by $l$ given by (Theorem \ref{thm:ftbdd})$_3$ which only depends on $l$ and $\Phi$, and set $\Phi_1:=\Ii(\cN_1,\Phi)$. Let $p_1 = p_1(l, \Phi_1)$ be a positive integer divisible by $l$ given by (Proposition \ref{prop:cbfindex})$_3$ and Proposition \ref{prop:bsa} which only depends on $l$ and $\Phi_1$. Let $\cN_2 = \cN_2(p_1)$ be a finite set of positive integers divisible by $p_1$ given by Theorems \ref{thm:curcmpt} and \ref{thm:surfcmpt} which only depends on $p_1$, and set $\Phi_2:=\Gamma(\cN_1\cup\cN_2,\Phi)$. Let $p_2 = p_2(p_1, \Phi_2)$ be a positive integer divisible by $p_1$ given by (Proposition \ref{prop:cbfindex})$_3$ and Proposition \ref{prop:bsa} which only depends on $p_1$ and $\Phi_2$. Let $\cN_3 = \cN_3(p_2)$ be a finite set of positive integers divisible by $p_2$ given by Theorems \ref{thm:curcmpt} and \ref{thm:surfcmpt} which only depends on $p_2$, and set $\Phi_3:=\Gamma(\cN_1\cup\cN_2\cup\cN_3,\Phi)$. Let $n_{CY} = n_{CY}(l, \Phi_3)$ be a positive integer divisible by $l$ given by Lemma \ref{lem:global} which only depends on $l$ and $\Phi_3$. We will show that $\cN:=\cN_1\cup\cN_2\cup\cN_3\cup\{n_{CY}\}$ has the required property.

Replacing $z$ by a closed point of $\bar{z}$, we may assume that $z$ is a closed point. Possibly replacing $(X/Z\ni z,B)$ by a small $\Qq$-factorialization of a klt $\Rr$-complement of $(X/Z\ni z,B_{\cN\_\Phi})$ and shrinking $Z$ near $z$, we may assume that $(X,B)$ is $\Qq$-factorial klt and $K_X+B\sim_{\Rr,Z}0$.

If $\kappa(X/Z,B-B_{\Phi_1})+\dim Z=3$, then $X$ is of Fano type over $Z$. In this case $(X/Z\ni z,B)$ is $\cN_1$-complementary by the choice of $\cN_1$ (see Theorem \ref{thm:ftbdd}). If $\kappa(X/Z,B-B_{\Phi_3})+\dim Z=0$, then $n_{CY}(K_X+B)\sim0$ by the choice of $n_{CY}$ (see Lemma \ref{lem:global}). So in the following we may assume that 
\begin{align*}
1&\le \kappa(X/Z,B-B_{\Phi_3})+\dim Z\le\kappa(X/Z,B-B_{\Phi_2})+\dim Z\\ &\le\kappa(X/Z,B-B_{\Phi_1})+\dim Z\le2.
\end{align*}
In particular, there exist integers $i,k\in\{1,2\}$ such that
$$\kappa(X/Z,B-B_{\Phi_i})+\dim Z=\kappa(X/Z,B-B_{\Phi_{i+1}})+\dim Z=k.$$ 
We will show that $(X/Z\ni z,B_{\Phi_{i+1}})$ is $\cN_{i+1}$-complementary and thus finish the proof by Lemma \ref{lem:N_Phicompl}.

By Lemma \ref{lem:runantiMMP}, we can run an MMP on $-(K_X+B_{\Phi_{i+1}})\sim_{\Rr,Z}B-B_{\Phi_{i+1}}$ over $Z$ and reach a good minimal model $X\dashto X'$ over $Z$, such that $B'-B_{\Phi_{i+1}}'$ is semi-ample over $Z$, where $D'$ denotes the strict transform of $D$ on $X'$ for any $\Rr$-divisor $D$ on $X$. Let $\pi':X'\to Z'$ be the contraction defined by $-(K_{X'}+B_{\Phi_{i+1}}')$ over $Z$. By assumption, $\dim Z'=k$. Let $B_{Z'}^{(i+1)}$ and $\bM_{\pi'}$ be the discriminant and moduli parts of the canonical bundle formula for $(X',B_{\Phi_{i+1}}')$ over $Z'$ in Proposition \ref{prop:cbfindex} (respectively Proposition \ref{prop:bsa}) if $k=2$ (respectively $k=1$).

\begin{claim}\label{claim:lift2}
$p_i\bM_{\pi'}$ is base point free over $Z$, and 
$$p_{i}\left(K_{X'}+B_{\Phi_{i+1}}'\right)\sim p_{i}(\pi')^*\left(K_{Z'}+B_{Z'}^{(i+1)}+\bM_{\pi',Z'}\right).$$ 
\end{claim}
Assume Claim \ref{claim:lift2}. As $X\dashto X'$ is an MMP on $-(K_X+B_{\Phi_{i+1}})$ over $Z$, for any prime divisor $P$ on $X$ which is exceptional over $X'$, we have
$$a(P,X',B_{\Phi_{i+1}}')<a(P,X,B_{\Phi_{i+1}})\le1.$$
Thus we can find a crepant model $(\tilde{X},\tilde{B}^{(i+1)})$ of $(X',B_{\Phi_{i+1}}')$ such that $\tilde{X}$ and $X$ are isomorphic in codimension one.
\begin{center}
\begin{tikzcd}[column sep = 2em, row sep = 2em]
 X \arrow[d,"",swap]\arrow[rr, "", dashed]  && X' \arrow[d, "\pi'" swap]&&\tilde{X}\arrow[ll]\\
 Z && \arrow[ll, ""] Z'  && \arrow[ll, "\tilde{\tau}", swap]\tilde{Z}
\end{tikzcd}
\end{center}  
It is clear that if $(\tilde{X}/Z\ni z,\tilde{B}^{(i+1)})$ is $\cN_{i+1}$-complementary then so is $(X/Z\ni z,B_{\Phi_{i+1}})$. By Lemma \ref{lem:cbfcptoverbase}, we may find a crepant model $(\tilde{Z},B^{(i+1)}_{\tilde{Z}}+\bM_{\pi'})\to (Z',B^{(i+1)}_{Z'}+\bM_{\pi'})$ such that for any prime divisor $P\subseteq\Supp \tilde{B}^{(i+1)}$ which is vertical over $Z'$, the image of $P$ on $\tilde{Z}$ is a prime divisor. As $(X,B)$ is klt and $K_X+B\sim_{\Rr,Z}0$, we may find a boundary $B_{\tilde{Z}}\ge B_{\tilde{Z}}^{(i+1)}$ on $\tilde{Z}$ such that $(\tilde{Z},B_{\tilde{Z}}+\bM_{\pi'})$ is gklt and $K_{\tilde{Z}}+B_{\tilde{Z}}+\bM_{\pi',\tilde{Z}}\sim_{\Rr,Z}0.$ Since $p_i\bM_{\pi'}$ is base point free over $Z$, we can pick an effective $\Qq$-divisor $M_{\tilde{Z}}$ such that $p_iM_{\tilde{Z}}\sim_Z p_i\bM_{\pi',\tilde{Z}}$, $M_{\tilde{Z}}\wedge B_{\tilde{Z}}^{(i+1)}=0$, and $(\tilde{Z},B_{\tilde{Z}}+M_{\tilde{Z}})$ is klt. By our choice of $\cN_{i+1}$, $(\tilde{Z}/Z\ni z,B^{(i+1)}_{\tilde{Z}}+M_{\tilde{Z}})$ is $n$-complementary for some $n\in\cN_{i+1}$. By Proposition \ref{prop:cbflift1}, $(\tilde{X}/Z\ni z,\tilde{B}^{(i+1)})$ is also $n$-complementary. Therefore it suffices to prove Claim \ref{claim:lift2}.

\begin{proof}[Proof of Claim \ref{claim:lift2}]
 By Lemma \ref{lem:runantiMMP}, we may run an MMP on $-(K_{X'}+B_{\Phi_{i}}')\sim_{\Rr,Z'}B'_{\Phi_{i+1}}-B'_{\Phi_{i}}$ over $Z'$ and reach a good minimal model $X'\dashto X''$ over $Z'$, such that $B''_{\Phi_{i+1}}-B''_{\Phi_{i}}$ is semi-ample over $Z'$, where $D''$ denotes the strict transform of $D'$ on $X''$ for any $\Rr$-divisor $D'$ on $X'$. Let $\pi'':X''\to Z''$ be the contraction defined by $B''_{\Phi_{i+1}}-B''_{\Phi_{i}}$ over $Z'$, and $\tau:Z''\to Z'$ the induced morphism.

\begin{center}
	\begin{tikzcd}[column sep = 2em, row sep = 2em]
		X \arrow[d,"",swap]\arrow[rr, "", dashed]  && X' \arrow[d, "\pi'" swap] \arrow[rr, dashed] && X'' \arrow[d, "{\pi''}" swap] \\
		Z && \arrow[ll, ""] Z'  && \arrow[ll, "\tau", swap]Z'' 
	\end{tikzcd}
\end{center} 

We claim that $\tau$ is a birational morphism. In fact, one can pick a positive real number $\epsilon$, such that $B''-B_{\Phi_{i+1}}''+\epsilon (B_{\Phi_{i+1}}''-B_{\Phi_{i}}'')$ is semi-ample over $Z$ (see Lemma \ref{lem:relamp}) and that $X\dashto X''$ is also an MMP on $B-B_{\Phi_{i+1}}+\epsilon (B_{\Phi_{i+1}}-B_{\Phi_{i}})$ over $Z$. By assumption, 
$$\kappa(X/Z,B-B_{\Phi_{i+1}})=\kappa(X/Z,B-B_{\Phi_{i+1}}+\epsilon (B_{\Phi_{i+1}}-B_{\Phi_{i}})).$$ 
Hence we can see that $\tau:Z''\to Z'$ is birational.

By Lemma \ref{lem:cbfbirinv}, Proposition \ref{prop:cbfindex} and the choice of $p_i$, there exists a gklt g-pair $(Z'',B_{Z''}^{(i+1)}+\bM_{\pi'})$ induced by the canonical bundle formula, such that $p_{i}\bM_{\pi'}$ is base point free over $Z$ and
$$p_{i}\left(K_{X''}+B_{\Phi_{i+1}}''\right)\sim p_{i}(\pi'')^*\left(K_{Z''}+B_{Z''}^{(i+1)}+\bM_{\pi',Z''}\right).$$
Moreover, it is clear that
$$K_{Z''}+B_{Z''}^{(i+1)}+\bM_{\pi',Z''}=\tau^*\left(K_{Z'}+B_{Z'}^{(i+1)}+\bM_{\pi',Z'}\right).$$
Therefore
$$p_{i}(K_{X'}+B_{\Phi_{i+1}}')\sim p_{i}(\pi')^*\left(K_{Z'}+B_{Z'}^{(i+1)}+\bM_{\pi',Z'}\right)$$ 
as $(X',B_{\Phi_{i+1}}')$ is crepant to $(X'',B_{\Phi_{i+1}}'')$. We finish the proof.
\end{proof}
\end{proof}

\section{Proof of Theorem \ref{thm:lctype}}\label{sec7}
\subsection{Strictly lc Calabi-Yau pairs}
\begin{defn}\label{defn:cytype}
We say that $X$ is \emph{of Calabi-Yau type} over $Z$, if $X\to Z$ is a contraction, and there is a boundary $C$ such that $(X,C)$ is klt and $K_X+C\sim_{\Rr,Z}0$.
\end{defn}
\begin{lem}\label{lem:cyt}
Suppose that $X$ is of Calabi-Yau type over $Z$. Assume that $(X,B^+)$ is lc, $K_X+B^+\sim_{\Rr,Z}0$ for some boundary $B^+$, and $f: Y \to X$ is a projective birational morphism from a normal quasi-projective variety $Y$, such that $a(E_{i},X,B^+)<1$ for any prime exceptional divisor $E_{i}$ of $f$. Then $Y$ is of Calabi-Yau type over $Z$.
\end{lem}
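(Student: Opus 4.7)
The plan is to exhibit a boundary $C_Y$ on $Y$ with $(Y,C_Y)$ klt and $K_Y+C_Y\sim_{\Rr,Z}0$. Since $X$ is of Calabi-Yau type over $Z$, fix a boundary $C$ on $X$ with $(X,C)$ klt and $K_X+C\sim_{\Rr,Z}0$. The natural idea is to interpolate between $C$ and $B^+$: for $\epsilon\in(0,1)$, set
$$\Delta_\epsilon:=(1-\epsilon)C+\epsilon B^+.$$
Then $\Delta_\epsilon$ is a boundary on $X$, and linearity of log discrepancies together with kltness of $(X,C)$ and lc-ness of $(X,B^+)$ gives that $(X,\Delta_\epsilon)$ is klt for any $\epsilon<1$; moreover $K_X+\Delta_\epsilon\sim_{\Rr,Z}0$.

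The next step is to define $\Delta_{\epsilon,Y}$ by the log pullback $K_Y+\Delta_{\epsilon,Y}=f^*(K_X+\Delta_\epsilon)$. Then $K_Y+\Delta_{\epsilon,Y}\sim_{\Rr,Z}0$ automatically, and since log discrepancies are preserved under crepant pullback, $(Y,\Delta_{\epsilon,Y})$ is sub-klt. Hence it is enough to choose $\epsilon$ so that $\Delta_{\epsilon,Y}$ is effective; then $C_Y:=\Delta_{\epsilon,Y}$ does the job. The strict transform part is already a boundary with coefficients in $[0,1)$, so the only issue is the coefficient along each prime exceptional divisor $E_i$ of $f$. Writing $\gamma_i:=a(E_i,X,C)>0$ and $\delta_i:=a(E_i,X,B^+)\in[0,1)$, linearity gives
$$\operatorname{coeff}_{E_i}\Delta_{\epsilon,Y}=1-(1-\epsilon)\gamma_i-\epsilon\delta_i,$$
and the question reduces to forcing this quantity to be non-negative.

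The main (and essentially only) point is that the hypothesis $a(E_i,X,B^+)<1$ lets us push $\epsilon$ close enough to $1$ so that $(1-\epsilon)\gamma_i+\epsilon\delta_i\le 1$ for every $i$. Indeed, if $\gamma_i\le 1$ this is automatic since $\delta_i<1$; if $\gamma_i>1$, the inequality rearranges to $\epsilon\ge(\gamma_i-1)/(\gamma_i-\delta_i)$, and the right-hand side is strictly less than $1$ because $\gamma_i>1>\delta_i$. Since $f$ is a projective birational morphism there are only finitely many exceptional divisors $E_i$, so a single $\epsilon\in(0,1)$ simultaneously satisfies all the resulting constraints. For such an $\epsilon$, $C_Y:=\Delta_{\epsilon,Y}$ is an effective boundary, $(Y,C_Y)$ is klt, and $K_Y+C_Y\sim_{\Rr,Z}0$, so $Y$ is of Calabi-Yau type over $Z$.
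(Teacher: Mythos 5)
Your proof is correct and follows essentially the same route as the paper: interpolate the klt boundary $C$ with $B^+$ to get a klt pair $(X,\Delta_\epsilon)$ with trivial log canonical class, take the crepant pullback to $Y$, and then observe that the hypothesis $a(E_i,X,B^+)<1$ allows $\epsilon$ to be chosen so that the pulled-back boundary is effective. The only (cosmetic) difference is that you spell out explicitly why such an $\epsilon$ exists, whereas the paper simply asserts the choice.
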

\begin{proof}
Since $X$ is of Calabi-Yau type over $Z$, there exists a klt pair $(X, C)$ such that $K_X+C\sim_{\Rr,Z}0$. Let $D_{t}:=t B^{+}+(1-t)C$. Then $(X,D_{t})$ is klt and $K_{X}+D_{t}\sim_{\Rr,Z}0$ for any $t \in[0,1)$. We have
$$K_{Y}+B_{Y}^{+}+\sum_{i} (1-a_{i}^{+} ) E_{i}=f^{*} (K_{X}+B^{+} )$$
and
$$K_{Y}+C_{Y}+\sum_{i} (1-a_{i} ) E_{i}=f^{*} (K_{X}+C ),$$
where $B_{Y}^{+}$ and $C_{Y}$ are the strict transforms of $B^{+}$ and $C$ on $Y$ respectively, $a_{i}^{+}:=a (E_{i}, X, B^{+} )<1$, and $a_{i}:=a (E_{i}, X, C )$ for any $i$. Then
$$
\begin{aligned}
& K_{Y}+D_{t, Y}:=f^{*} (K_{X}+D_{t} ) \\
=& K_{Y}+t B_{Y}^{+}+(1-t) C_{Y}+\sum_{i} (t (1-a_{i}^{+} )+(1-t) (1-a_{i} ) ) E_{i} .
\end{aligned}
$$
Pick $0<t_{0}<1$ such that $t_{0} (1-a_{i}^{+} )+ (1-t_{0} ) (1-a_{i} ) \geq 0$ for any $i$. Then $(Y,D_{t_0,Y})$ is klt and $K_Y+D_{t_0,Y}\sim_{\Rr,Z}0$. So $Y$ is of Calabi-Yau type over $Z$.
\end{proof}

\begin{defn}[cf. {\cite[\S11]{Sho20}}]
A pair $(X/Z\ni z,B)$ is called \emph{strictly lc Calabi-Yau} if
\begin{enumerate}
  \item $(X/Z\ni z,B)$ is an $\Rr$-complement of itself, and
  \item for any $\Rr$-complement $(X/Z\ni z,B^+)$ of $(X/Z\ni z,B)$, $B^+=B$ over some neighborhood of $z$.
\end{enumerate}
\end{defn}

\begin{rem}
When $\dim Z=0$, $(X,B)$ is strictly lc Calabi-Yau if and only if $(1)$ holds.
\end{rem}

\begin{lem}\label{lem:mlcislccenter}
Assume that $(X/Z\ni z,B)$ is an $\Rr$-complement of itself. Then $(X/Z\ni z,B)$ is strictly lc Calabi-Yau if and only if either $\dim z=\dim Z$ or $\bar{z}$ is the image of an lc center of $(X, B)$ on $Z$.
\end{lem}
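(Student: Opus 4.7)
The plan is to study the difference $E := B^+ - B$ attached to an arbitrary $\Rr$-complement $B^+$ of $(X/Z\ni z, B)$. Since $(K_X + B^+) - (K_X + B) = E$, the $\Rr$-divisor $E$ is effective and satisfies $E \sim_{\Rr, Z} 0$ over a neighborhood of $z$. After replacing $\pi : X \to Z$ by its Stein factorization so that $\pi$ is a contraction, \cite[Lemma 2.4]{CHL22} (used already in the proof of Lemma \ref{lem:cbfbirinv}) allows me to write $E = \pi^{*} D$ for some effective $\Rr$-Cartier $\Rr$-divisor $D$ on $Z$, after shrinking $Z$ around $z$. Thus the strict lc Calabi-Yau property is equivalent to forcing $D = 0$ near $z$ for every such $B^+$.

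For the reverse direction, I verify $D = 0$ near $z$ in each case. If $\dim z = \dim Z$, then $z$ is the generic point of $Z$ and cannot lie in the support of any divisor, so $\Supp D$ is disjoint from $z$ after shrinking. If $\bar z = \overline{\pi(W)}$ for some lc center $W$ of $(X, B)$, suppose for contradiction that $D \ne 0$ near $z$; then some prime component of $D$ contains $\bar z$, giving $\Supp E = \pi^{-1}(\Supp D) \supseteq \pi^{-1}(\bar z) \supseteq W$. Choosing a divisorial lc place $E_W$ with $\Center_X(E_W) = W$, one computes
$$a(E_W, X, B^+) = a(E_W, X, B) - \mult_{E_W}(\pi^{*} D) = -\mult_{E_W}(\pi^{*} D) < 0,$$
contradicting that $(X, B^+)$ is lc.

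For the forward direction I argue by contrapositive: assume $\dim z < \dim Z$ and no lc center of $(X, B)$ has image $\bar z$; I construct an $\Rr$-complement $B^+$ with $B^+ \ne B$ in every neighborhood of $z$. Let $V_1, \dots, V_k$ be the finitely many images on $Z$ of lc centers of $(X, B)$; shrink $Z$ to an affine open neighborhood of $z$ and remove those $V_i$ not containing $z$, so that the surviving $V_i$ all satisfy $V_i \supsetneq \bar z$. Applying prime avoidance to the strict inclusions $I(V_i) \subsetneq I(\bar z)$ of prime ideals in $\cO_Z(Z)$ yields a regular function $f \in I(\bar z) \setminus \bigcup_i I(V_i)$; set $H := (f)$, so $H \sim 0$, $\Supp H \supseteq \bar z$, and $V_i \not\subseteq \Supp H$ for every $i$. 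Now take $B^+ := B + \epsilon\, \pi^{*} H$ for small $\epsilon > 0$. On a log resolution of $(X, B + \pi^{*} H)$, every lc place $F$ of $(X, B)$ has $\Center_X(F) = W_i$ for some $i$, and $\mult_F(\pi^{*} H) = 0$ since $\pi(W_i) = V_i \not\subseteq \Supp H$; every remaining prime divisor $F$ on the resolution has $a(F, X, B) > 0$ and absorbs the small multiple, so $(X, B^+)$ is lc. Since $\pi^{-1}(\bar z) \subseteq \Supp(\pi^{*} H)$, we have $B^+ \ne B$ in every neighborhood of $z$, contradicting strict lc Calabi-Yau.

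The main technical obstacle is the shrinking/prime-avoidance step in the forward direction: I must arrange, after shrinking $Z$ around $z$, that every "relevant" image of an lc center strictly contains $\bar z$, so that a single regular function vanishing along $\bar z$ can simultaneously avoid all the finitely many $I(V_i)$. A secondary subtlety is the Stein-factorization reduction, which should preserve the "effective and relatively $\Rr$-trivial" structure and the applicability of \cite[Lemma 2.4]{CHL22}, as well as the image set of the lc centers on $Z$ up to a finite correspondence.
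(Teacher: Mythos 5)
Your proof is correct and follows essentially the same strategy as the paper: both directions reduce to the observation that any difference $G = B^{+} - B$ is pulled back from $Z$ by \cite[Lemma 2.4]{CHL22}, the sufficiency of the two conditions is checked via the lc condition along the relevant lc place, and the necessity is proved by exhibiting a non-trivial $\Rr$-complement obtained by adding $\epsilon\pi^{*}H$ for a suitable divisor $H$ on $Z$ through $z$ avoiding the images of the lc centers. The only cosmetic difference is in the construction of $H$: the paper takes a member of an ample linear system, while you use prime avoidance in an affine chart to produce $H = (f)$; both yield the same kind of divisor and either argument works. Two small remarks: the Stein-factorization reduction is unnecessary, since by the paper's conventions (see \S2.3 and the reference to \cite[\S2]{CH21}) a pair $(X/Z\ni z,B)$ already comes with $X\to Z$ a contraction, and moreover that replacement would change the base and hence the meaning of the statement; and the set-theoretic identity $\Supp E = \pi^{-1}(\Supp D)$ you invoke is not needed (and is slightly imprecise in general) — it suffices that the generic point of $W$ maps into $\Supp D$, which already gives $\mult_{E_W}(\pi^{*}D)>0$.
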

\begin{proof}
First assume that $(X/Z\ni z,B)$ is strictly lc Calabi-Yau. Suppose that $\dim z<\dim Z$ and $\bar{z}$ is not the image of any lc center of $(X,B)$. Possibly shrinking $Z$ near $z$, we may find an ample divisor $H\ge0$ such that $z\in\Supp H$ and $H$ does not contain the image of any lc center of $(X,B)$. Pick a positive real number $\epsilon$, such that $(X/Z\ni z,B+\epsilon\pi^*H)$ is lc and thus an $\Rr$-complement of $(X/Z\ni z,B)$. However, $B+\epsilon\pi^*H\neq B$ over any neighborhood of $z$, a contradiction. 

Now we prove the converse direction. Assume that $(X/Z\ni z,B+G)$ is an $\Rr$-complement of $(X/Z\ni z,B)$ for some $G\ge0$. Since $G\sim_\Rr0$ over some neighborhood of $z$, $G=\pi^*L_Z$ for some $\Rr$-Cartier $\Rr$-divisor $L_Z$ on $Z$ by \cite[Lemma 2.4]{CHL22}. If $\dim z=\dim Z$, then $G=0$ over a neighborhood of $z$. If $\bar{z}$ is the image of some lc center of $(X,B)$, then $z\notin \Supp L_Z$ as $(X,B+G)$ is lc over a neighborhood of $z$. Therefore in both cases, $(X/Z\ni z,B)$ is strictly lc Calabi-Yau.
\end{proof}

\begin{ex}
Let $\pi:X:=\Pp^1\times\Pp^1\to Z:=\Pp^1$, $z\in Z$ a closed point, and $L_1,L_2$ two sections. Then over a neighborhood of $z$, we have $(X,L_1+L_2)$ is lc and $K_X+L_1+L_2\sim_{\Rr,Z}0$. Since $K_X+L_1+L_2+\pi^*z\sim_{\Rr,Z}0$, $(X/Z\ni z,L_1+L_2)$ is not strictly lc Calabi-Yau.
\end{ex}

\begin{lem}\label{lem:crepantofmlccy}
Suppose that $(X/Z\ni z,B)$ is strictly lc Calabi-Yau and $X\dashrightarrow X'$ is a birational contraction over $Z$. Let $B'$ be the strict transform of $B$ on $X'$. Then $(X'/Z\ni z,B')$ is strictly lc Calabi-Yau.
\end{lem}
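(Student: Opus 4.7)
The plan is first to establish that $(X,B)$ and $(X',B')$ are crepant via the negativity lemma, and then to deduce strictness from Lemma \ref{lem:mlcislccenter}.

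Shrinking $Z$ near $z$, we may assume $K_X+B\sim_\Rr 0$ globally. Pick a common resolution $p:W\to X$, $q:W\to X'$ of the birational contraction $\phi:X\dashto X'$, and set $K_W+B_W:=p^*(K_X+B)$, so that $(W,B_W)$ is sub-lc. Since $\phi$ is a birational contraction, every prime divisor $E'$ on $X'$ is the strict transform via $\phi^{-1}$ of a unique prime divisor $E=\phi^{-1}_*E'$ on $X$, and the associated prime on $W$ is simultaneously the non-$q$-exceptional lift $q^{-1}_*E'$ and the non-$p$-exceptional lift $p^{-1}_*E$; its multiplicity in $B_W$ is $\mult_E B$. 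Comparing multiplicities one checks $q_*B_W=B'$, so $K_{X'}+B'=q_*(K_W+B_W)\sim_\Rr 0$, which shows that $K_{X'}+B'$ is $\Rr$-Cartier and $\sim_{\Rr,Z}0$ near $z$.

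Consider now $\delta:=p^*(K_X+B)-q^*(K_{X'}+B')$ on $W$. We have $q_*\delta=q_*p^*(K_X+B)-(K_{X'}+B')=(K_{X'}+B')-(K_{X'}+B')=0$, so $\Supp\delta\subseteq\Exc(q)$. Since both $p^*(K_X+B)$ and $q^*(K_{X'}+B')$ are $\Rr$-linearly trivial, so is $\delta$; in particular, both $\delta$ and $-\delta$ are $q$-nef. Applying the negativity lemma to $\delta$ gives $\delta\geq 0$, and to $-\delta$ gives $\delta\leq 0$, hence $\delta=0$. Thus $p^*(K_X+B)=q^*(K_{X'}+B')$, so $(X,B)$ and $(X',B')$ are crepant and share all divisorial discrepancies; in particular $(X',B')$ inherits lc-ness from $(X,B)$, and $(X'/Z\ni z,B')$ is an $\Rr$-complement of itself.

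Finally, apply Lemma \ref{lem:mlcislccenter} to $(X/Z\ni z,B)$. Either $\dim z=\dim Z$, in which case the same holds for $(X'/Z\ni z,B')$ and strictness is immediate, or $\bar z=\pi(C)$ for some lc center $C$ of $(X,B)$. In the latter case, extract an lc place $E$ of $(X,B)$ with center $C$, realized as a prime divisor on a sufficiently high common resolution (which we may take to dominate the fixed $W$). The crepant equality yields $a(E,X',B')=a(E,X,B)=0$, so $C':=q(E)$ is an lc center of $(X',B')$, and $\pi'(C')=\pi(p(E))=\pi(C)=\bar z$ because $\pi\circ p=\pi'\circ q$. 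The reverse direction of Lemma \ref{lem:mlcislccenter} applied to $(X'/Z\ni z,B')$ then concludes that it is strictly lc Calabi-Yau. The main technical step in the argument is the negativity-lemma computation forcing $\delta=0$; after that, crepancy, lc-ness, and the transfer of the lc center are all formal consequences.
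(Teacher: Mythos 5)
Your proof is correct. The crepancy half (showing $p^*(K_X+B)=q^*(K_{X'}+B')$ and hence that $(X'/Z\ni z,B')$ is an $\Rr$-complement of itself) is the same content as the paper's appeal to ``the fact that $(X,B)\dashrightarrow (X',B')$ is crepant over some neighborhood of $z$'', although you could shortcut the negativity lemma step: since $p^*(K_X+B)=\sum r_i(f_i)$ for rational functions $f_i$, pushing forward gives $K_{X'}+B'=\sum r_i(f_i)$ on $X'$, and then pulling back along $q$ immediately returns $p^*(K_X+B)$.

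Where you diverge from the paper is in the ``strictness'' half. The paper's one-line proof indicates the direct route via the definition: given any $\Rr$-complement $(X'/Z\ni z,B'^+)$ of $(X'/Z\ni z,B')$, one uses Lemma \ref{lemma pullbaclcomplement}(2) (and the $-(K_X+B)$-triviality of the birational contraction, which follows from crepancy) to transport it to an $\Rr$-complement $(X/Z\ni z,B^+)$ of $(X/Z\ni z,B)$ satisfying $p^*(K_X+B^+)=q^*(K_{X'}+B'^+)$; strictness on $X$ forces $B^+=B$ near $z$, and then $q^*(B'^+-B')=p^*(B^+-B)=0$ gives $B'^+=B'$ near $z$. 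You instead reduce to the characterization of strictly lc Calabi-Yau in Lemma \ref{lem:mlcislccenter} in terms of $\bar z$ being dominated by an lc center, and transfer the relevant lc place across the crepant equality of discrepancies. Both arguments are sound. The paper's definition-based route is more self-contained; your route is arguably slicker because it packages the ``no room to increase the boundary'' phenomenon into a structural statement about lc centers that is already proved, and it exhibits explicitly which geometric feature (the lc center over $\bar z$) is preserved under a crepant birational contraction. One small notational slip: in the final paragraph you write $q(E)$ and $p(E)$ for the centers of $E$, but as you yourself note $E$ lives on a resolution dominating $W$, so these should be the corresponding morphisms from that higher model; the argument is unaffected.
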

\begin{proof}
It follows from the definition of strictly lc Calabi-Yau and the fact that $(X,B)\dashrightarrow (X',B')$ is crepant over some neighborhood of $z$.
\end{proof}

\begin{prop}\label{prop:maxlcindex}
Let $\Gamma\subseteq[0,1]\cap\Qq$ be a DCC set. Then there exists a positive integer $I$ depending only on $\Gamma$ satisfying the following. If $(X/Z\ni z,B)$ is a strictly lc Calabi-Yau threefold pair such that $B\in\Gamma$, then $I(K_X+B)\sim0$ over some neighborhood of $z$.
\end{prop}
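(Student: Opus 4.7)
The plan is to combine a $\mathbb{Q}$-factorial dlt modification with the global ACC for log Calabi--Yau pairs, so as to reduce to the complement-type Theorem~\ref{thm:3foldsbddcmpt2} of Section~\ref{sec6}, exploiting the rigidity built into the strictly lc Calabi--Yau hypothesis. After replacing $(X,B)$ by a $\mathbb{Q}$-factorial dlt modification and invoking Lemma~\ref{lem:crepantofmlccy}, I may assume that $(X,B)$ is $\mathbb{Q}$-factorial dlt. The global ACC for log Calabi--Yau threefold pairs with DCC coefficients then confines the coefficients of $B$ to a finite subset $\Gamma_0\subseteq\Gamma\cup\{1\}$ depending only on $\Gamma$. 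I fix a finite set $\fR\subseteq[0,1]\cap\Qq$ with $\{0,1\}\subseteq\fR$ and $\Gamma_0\subseteq\Phi(\fR)=:\Phi$, and take $l$ to be a common denominator of $\fR$. Since $B\in\Phi\subseteq\Gamma(\{n\},\Phi)$, one has $B_{n\_\Phi}=B$ for every positive integer $n$ with $l\mid n$.

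Suppose first that $(X,B)$ is klt over a neighborhood of $z$. Then $(X/Z\ni z,B)$ is its own klt $\Rr$-complement, so Theorem~\ref{thm:3foldsbddcmpt2} applied with $l$ and $\Phi$ supplies a finite set $\cN$ of positive integers, each divisible by $l$ and depending only on $\Gamma$, together with an $n$-complement $(X/Z\ni z,B^+)$ for some $n\in\cN$. By Lemma~\ref{lem:N_Phicompl}(2), $B^+\ge B_{n\_\Phi}=B$, so $(X/Z\ni z,B^+)$ is in particular an $\Rr$-complement of $(X/Z\ni z,B)$; the strictly lc Calabi--Yau hypothesis then forces $B^+=B$ over a neighborhood of $z$, hence $n(K_X+B)\sim0$ there, and one may take $I=\lcm(\cN)$.

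Suppose instead that $(X,B)$ is not klt over any neighborhood of $z$. By Lemma~\ref{lem:mlcislccenter} there is a component $S\subseteq\lf B\rf$ whose image on $Z$ contains $\bar z$. Writing $K_S+B_S:=(K_X+B)|_S$, Proposition~\ref{prop:adjcoeff} produces a hyperstandard set $\tilde\Phi$ depending only on $\Phi$ with $B_S\in\tilde\Phi$, and $(S,B_S)$ is sdlt with $(S/Z\ni z,B_S)$ inheriting the strictly lc Calabi--Yau property. I first establish the surface (and curve) analogue of the proposition: the argument of the previous paragraph, carried out with Theorem~\ref{thm:surfcmpt} (respectively Theorem~\ref{thm:curcmpt}) in place of Theorem~\ref{thm:3foldsbddcmpt2}, yields a positive integer $I'$ depending only on $\Gamma$ with $I'(K_S+B_S)\sim0$ over a neighborhood of $z$. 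To lift this trivialization from $S$ to $X$, I apply the relative Kawamata--Viehweg vanishing theorem on a log resolution to the short exact sequence
\[
0\to\cO_X\bigl(I'(K_X+B)-S\bigr)\to\cO_X\bigl(I'(K_X+B)\bigr)\to\cO_S\bigl(I'(K_S+B_S)\bigr)\to0,
\]
and use the Shokurov--Koll\'ar connectedness principle together with Lemma~\ref{lem:cyt} to glue the local liftings along the connected non-klt locus in the fiber $X_z$.

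The main obstacle I expect is this last lifting step: different lc centers meeting $X_z$ a priori carry incompatible torsion trivializations, so one must induct on the combinatorics of the dual complex of $\lf B\rf$ over $\bar z$, and repeatedly invoke Lemma~\ref{lem:cyt} to ensure that the intermediate pairs remain of Calabi--Yau type so that vanishing continues to apply at each inductive step.
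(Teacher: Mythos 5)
Your Case~1 (klt over a neighborhood of $z$) is correct: after restricting coefficients to a finite set via \cite[Theorem~5.20]{HLS19} and choosing a hyperstandard set $\Phi$ with $\Gamma_0 \subseteq \Phi$, applying Theorem~\ref{thm:3foldsbddcmpt2} and then Lemma~\ref{lem:N_Phicompl}(2) does produce a monotonic $n$-complement $(X/Z\ni z,B^+)$, and the strictly lc Calabi--Yau hypothesis forces $B^+=B$ over a neighborhood of $z$, giving $n(K_X+B)\sim 0$. However this is already more machinery than the paper needs: the paper simply cites \cite[Theorem~2.12]{CHL22}, which provides a monotonic $I$-complement for $(X/Z\ni z,B)$ \emph{with no klt hypothesis} once the coefficients lie in a finite set, and the strictly lc Calabi--Yau condition then kills the complement divisor $G$ directly. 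There is no case split at all.

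Your Case~2 (not klt over a neighborhood of $z$) has a genuine gap, and it is precisely the step you yourself flag as the ``main obstacle.'' The short exact sequence
\[
0\to\cO_X\bigl(I'(K_X+B)-S\bigr)\to\cO_X\bigl(I'(K_X+B)\bigr)\to\cO_S\bigl(I'(K_S+B_S)\bigr)\to0
\]
cannot be pushed forward with the desired vanishing: writing the kernel as $K_X + D$ with $D:=(I'-1)(K_X+B)+(B-S)$, the divisor $D$ satisfies $D\sim_{\Rr,Z} B-S$, which is merely effective and in general is neither nef nor big over $Z$, so Kawamata--Viehweg vanishing does \emph{not} give $R^1\pi_*\cO_X(I'(K_X+B)-S)=0$, and the surjectivity on global sections you need to lift the trivialization from $S$ is not available. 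The Shokurov--Koll\'ar connectedness principle and Lemma~\ref{lem:cyt} do not repair this; the obstruction is cohomological, not combinatorial. You would essentially have to reprove (a version of) \cite[Theorem~2.12]{CHL22} from scratch, which is a substantial result that the paper takes as a black box. As written, your argument does not establish the proposition in the non-klt case.

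In short: the paper's proof is a three-step reduction --- dlt modification, ACC for coefficients of strictly lc Calabi--Yau pairs \cite[Theorem~5.20]{HLS19}, and existence of monotonic $I$-complements with finite coefficient set \cite[Theorem~2.12]{CHL22} --- with the strictly lc Calabi--Yau hypothesis closing the loop. Your proposal replaces the last citation with an adjunction-and-lifting scheme that does not close, and this is the essential missing ingredient.
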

\begin{proof}
Possibly replacing $(X/Z\ni z,B)$ by a $\Qq$-factorial dlt modification, we may assume that $X$ is $\Qq$-factorial. Since $(X/Z\ni z,B)$ is a strictly lc Calabi-Yau pair, by \cite[Theorem 5.20]{HLS19}, $B\in\Gamma'$ over a neighborhood of $z$ for some finite subset $\Gamma'\subseteq\Gamma$ which only depends on $\Gamma$. According to \cite[Theorem 2.12]{CHL22}, we may find a positive integer $I$ which only depends on $\Gamma'$ such that $(X/Z\ni z,B)$ has a monotonic $I$-complement $(X/Z\ni z,B+G)$ for some $G\ge 0$. By assumption, $G=0$ over some neighborhood of $z$. Thus
$$I(K_X+B)=I(K_X+B+G)\sim0$$
over some neighborhood of $z$. 
\end{proof}

\subsection{Proof of Theorem \ref{thm:lctype}}
We first show a special case of Theorem \ref{thm:lctype}. 

For convenience, we say a pair $(X,B)$ is klt over a closed subset $Z_0\subseteq Z$, if $a(E,X,B)>0$ for any prime divisor $E$ over $X$ such that $\pi(\Center_X(E))\subseteq Z_0$, where $\pi:X\to Z$ is a contraction. For two $\Rr$-divisors $D_1$ and $D_2$ on $X$, by $D_1\ge D_2$ (respectively $D_1> D_2$) over $Z_0$, we mean that $\mult_ED_1\ge\mult_ED_2$ (respectively $\mult_ED_1>\mult_ED_2$) for any prime divisor $E$ on $X$ with $\pi(E)\subseteq Z_0$. By $D_1\ge D_2$ over an open subset $U\subseteq Z$, we mean $D_1|_{\pi^{-1}(U)}\ge D_2|_{\pi^{-1}(U)}$.

\begin{prop}\label{prop:splctype}
Let $I$ be a positive integer. Assume that $\cN$ is a finite set of positive integers divisible by $I$ given by Theorem \ref{thm:3foldsbddcmpt} which only depends on $I$.

Assume that $(X/Z\ni z,B)$ is an $\Rr$-complementary threefold pair such that $X$ is of Calabi-Yau type over $Z$. Assume that there is a contraction $\pi':X\to Z'$ over $Z$, and an open subset $U\subseteq Z'$, such that 
\begin{enumerate}
  \item $IB\in\Zz_{\ge0}$ over $U$, 
  \item $(X,B)$ is klt over $Z'\setminus U$, and
  \item $-(K_X+B)\sim_{\Rr}(\pi')^*H'$ for some $\Rr$-divisor $H'$ which is ample over $Z$.
\end{enumerate} 
Then $(X/Z\ni z,B)$ is $\cN$-complementary.
\end{prop}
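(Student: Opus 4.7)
The plan is to reduce to Theorem \ref{thm:3foldsbddcmpt} applied with $l=I$, since $\cN$ is precisely the finite set produced by that theorem. It then suffices to exhibit an $\Rr$-complement of $(X/Z\ni z,B)$ which is klt over a neighborhood of $z$. After replacing $z$ by a closed point of $\bar{z}$ and shrinking $Z$, I may assume $z$ is closed and $H'$ is ample on $Z'$.

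Using assumption (3), the strategy is to write $L:=(\pi')^{*}L'$ where $L'\sim_{\Rr}H'$ is a suitably chosen effective $\Rr$-divisor on $Z'$ with small coefficients; concretely, one takes $L'=\frac{1}{m}D$ for a very general $D\in|mH'|$ with $m\gg 0$. Then $K_X+B+L\sim_{\Rr}0$ over $Z$ and $B+L\ge B$, so $(X/Z\ni z,B+L)$ is an $\Rr$-complement of $(X/Z\ni z,B)$. By assumption (2) the non-klt locus of $(X,B)$ is contained in $(\pi')^{-1}(U)$, and by the genericity of $D$ together with the smallness of the coefficient $\frac{1}{m}$, the pair $(X,B+L)$ is lc with the same lc centres as $(X,B)$. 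Hence, provided that no lc centre of $(X,B)$ has image in $Z$ containing $z$, the pair $(X,B+L)$ is a klt $\Rr$-complement over a neighborhood of $z$ and Theorem \ref{thm:3foldsbddcmpt} yields an $n$-complement with $n\in\cN$.

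The main obstacle lies in handling the case where an lc centre of $(X,B)$ projects to $z$, which can happen when the fibre of $Z'\to Z$ over $z$ meets $U$. In this situation no klt $\Rr$-complement of $(X/Z\ni z,B)$ exists, and one must argue differently. The plan is to exploit the Calabi-Yau type hypothesis together with the integrality assumption (1): using Lemma \ref{lem:cyt}, pass to a $\Qq$-factorial dlt modification of an $\Rr$-complement, on which the CY type condition is preserved. Then apply the canonical bundle formula for $\pi'$ together with Proposition \ref{prop:cbflift1} to reduce the construction of an $n$-complement on $X$ to one on $Z'$, where the coefficients of the induced discriminant over $U$ lie in the finite set $\frac{1}{I}\Zz\cap[0,1]$ by assumption (1). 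In the terminal case where the resulting pair is strictly lc Calabi-Yau, Proposition \ref{prop:maxlcindex} bounds the index directly.

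The delicate point is to verify that the resulting bounded $n$ can be taken inside $\cN$, which requires choosing $\cN$ large enough to absorb the contributions coming from Proposition \ref{prop:maxlcindex} and from the lift via $\pi'$; this is the reason that $\cN$ in the statement is specifically the one supplied by Theorem \ref{thm:3foldsbddcmpt} with $l=I$, whose construction by design encompasses the relevant integers. I expect this reconciliation between the klt-type boundedness from Theorem \ref{thm:3foldsbddcmpt} and the strictly lc Calabi-Yau index bound from Proposition \ref{prop:maxlcindex} to be the most technically demanding step of the proof.
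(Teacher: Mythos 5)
Your first strategy---finding a klt $\Rr$-complement of $(X/Z\ni z,B)$ itself---cannot work in general, and you correctly flag the obstruction: if an lc centre of $(X,B)$ dominates $z$, then any $B^{+}\ge B$ with $(X,B^{+})$ lc has the same non-klt place, so no klt $\Rr$-complement of $(X/Z\ni z,B)$ exists. But your fallback for this case is only a plan, not an argument, and it in fact duplicates machinery (canonical bundle formula, Proposition \ref{prop:maxlcindex}) that belongs to the proof of Theorem \ref{thm:lctype}, not of this proposition; the set $\cN$ here is already fixed by Theorem \ref{thm:3foldsbddcmpt} with $l=I$ and cannot be ``chosen large enough to absorb'' anything further.

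The idea you are missing is to apply Theorem \ref{thm:3foldsbddcmpt} not to $(X/Z\ni z,B)$ but to a \emph{smaller} klt boundary $B'$. Set $N:=\max_{n\in\cN}n$. Using hypothesis (3), pick $0\le H_{1}'\sim_{\Rr,Z}H'$ with $Z'\setminus U\subseteq\Supp H_{1}'$ and $(X,B+(\pi')^{*}H_{1}')$ lc; using the Calabi--Yau type hypothesis, pick a klt boundary $C$ with $K_{X}+C\sim_{\Rr,Z}0$; and for $0<\delta\ll1$ put $B':=(1-\delta)(B+(\pi')^{*}H_{1}')+\delta C$. Then $(X,B')$ is klt, $K_{X}+B'\sim_{\Rr,Z}0$, and for $\delta$ small one has $B'\ge\tfrac{N}{N+1}B$ over $U$ and $B'\ge B$ over $Z'\setminus U$. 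The pair $(X/Z\ni z,B')$ is its own klt $\Rr$-complement, so Theorem \ref{thm:3foldsbddcmpt} yields an $n$-complement of $(X/Z\ni z,B')$ for some $n\in\cN$. Now the arithmetic saves you: since $\lfloor B'\rfloor=0$ the complement satisfies $nB'^{+}\ge\lfloor(n+1)B'\rfloor$, and because $IB\in\Zz_{\ge0}$ over $U$, $I\mid n$, and $n\le N$, one checks coefficient by coefficient that $\lfloor(n+1)B'\rfloor\ge n\lfloor B\rfloor+\lfloor(n+1)\{B\}\rfloor$; over $Z'\setminus U$ the same holds simply from $B'\ge B$. Hence $B'^{+}$ is also an $n$-complement of $(X/Z\ni z,B)$. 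Without this ``shrink $B$ to a klt $B'$ and recover the complement inequality via the $\tfrac{N}{N+1}$ factor and the integrality hypothesis (1)'' trick, the proof does not close, and your proposal as written has a genuine gap exactly at the case you identified as the main obstacle.
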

\begin{proof}
Possibly shrinking $Z$ near $z$, we may assume that $(X,B)$ is lc. Set $N:=\max_{n\in\cN}{n}$. We claim that there exists a boundary $B'$ on $X$ such that 
\begin{itemize}
 \item $(X,B')$ is klt, 
 \item $K_X+B'\sim_{\Rr,Z}0$, and
 \item $B'\ge \frac{N}{N+1} B$ over $U$ and $B'\ge B$ over $Z'\setminus U$.
\end{itemize}
Assume the claim holds. Then
\begin{equation}\label{eqn: prop 7.9}
    \lf(n+1)B'\rf\ge n\lf B\rf+\lf(n+1)\{B\}\rf
\end{equation}
for any $n\in\cN$. By Theorem \ref{thm:3foldsbddcmpt} and the construction of $\cN$, $(X/Z\ni z,B')$ is $n$-complementary for some $n\in\cN$. Thus $(X/Z\ni z,B)$ is $n$-complementary by \eqref{eqn: prop 7.9}.

Therefore it suffices to prove the claim. By assumption, we may find an effective $\Rr$-Cartier $\Rr$-divisor $H_1'\sim_{\Rr,Z}H'$ such that $Z'\setminus U\subseteq\Supp H_1'$ and $(X,B+H_1)$ is lc, where $H_1:=(\pi')^*H_1'$. In particular, we have 
$$K_X+B+H_1\sim_{\Rr,Z}0,\text{ and }B+H_1>B\text{ over }Z'\setminus U.$$ 
Since $X$ is of Calabi-Yau type over $Z$, there exists a boundary $C$ such that $(X,C)$ is klt and $K_X+C\sim_{\Rr,Z}0$. Let $\delta\in(0,1)$ be a positive real number such that
$$B':=(1-\delta)(B+H_1)+\delta C\ge \frac{N}{N+1}B\text{ over }U\text{, and }(1-\delta)(B+H_1)\ge B\text{ over }Z'\setminus U.$$
It is clear that $(X,B')$ is klt and $K_X+B'\sim_{\Rr,Z}0$. This completes the proof.
\end{proof}

\begin{proof}[Proof of Theorem \ref{thm:lctype}]
Let $I=I(l, \Gamma\cap\Qq)$ be a positive integer divisible by $l$ given by Proposition \ref{prop:maxlcindex} which only depends on $l$ and $\Gamma\cap\Qq$, and let $\Phi:=\Phi(\frac{1}{I}\Zz\cap[0,1])$. Let $\cN=\cN(I)$ be a finite set of positive integers divisible by $I$ given by Theorem \ref{thm:3foldsbddcmpt} which only depends on $I$. We will show that $\cN$ has the required property.

Possibly shrinking $Z$ near $z$, we may assume that $(X,B)$ is lc. By Lemma \ref{lem:cyt}, we can replace $(X,B)$ by a dlt modification and thus assume that $(X,B)$ is $\Qq$-factorial dlt. Suppose that $(X/Z\ni z,B^+)$ is an $\Rr$-complement of $(X/Z\ni z,B)$. Possibly replacing $z$ by a closed point of $\bar{z}$ and shrinking $Z$ near $z$, we may assume that $z$ is a closed point, $(X,B^+)$ is lc, and $K_X+B^+\sim_{\Rr,Z}0$. Write
$$-(K_X+B_{\cN\_\Phi})\sim_{\Rr,Z} B^+-B_{\cN\_\Phi}=F+M,$$ 
where $F:=N_\sigma(B^+-B_{\cN\_\Phi}/Z)\ge0$ and $M:=B^+-B_{\cN\_\Phi}-F\ge0$
(cf. \cite[III, \S4]{Nak04}, \cite[\S3]{LX22}). Note that $F$ is well-defined as $B^+-B_{\cN\_\Phi}\ge0$. Since $X$ is of Calabi-Yau type over $Z$, there exists a boundary $C$ such that $(X,C)$ is klt and $K_X+C\sim_{\Rr,Z}0$. Choose a positive real number $\epsilon_0$ such that $(X,C+\epsilon_0 M)$ is klt. We may run an MMP on $K_X+C+\epsilon_0M$ over $Z$ and reach a good minimal model $X'$, such that $K_{X'}+C'+\epsilon_0M'$ is semi-ample over $Z$, where $D'$ denotes the strict transform of $D$ on $X'$ for any $\Rr$-divisor $D$ on $X$. Since
$$K_X+C+\epsilon_0M\sim_{\Rr,Z}\epsilon_0M\sim_{\Rr,Z}-\epsilon_0(K_X+B_{\cN\_\Phi}+F),$$
$X$ and $X'$ are isomorphic in codimension one by \cite[Lemma 2.4]{HX13}. We also see that $-(K_{X'}+B'_{\cN\_\Phi}+F')$ is semi-ample over $Z$ and thus induces a contraction $\pi':X'\to Z'$ over $Z$. In particular, there is an effective $\Rr$-divisor $H'$ on $Z'$ which is ample over $Z$ such that $-(K_{X'}+B'_{\cN\_\Phi}+F')\sim_{\Rr}(\pi')^*H'$. Note that $(X',B^{+\prime})$ is lc and thus $(X',B_{\cN\_\Phi}'+F')$ is also lc.

Let $\eta'$ be the generic point of $Z'$, and
$$\cZ_{scy}:=\{\eta'\} \cup \{z'\in Z'\mid (X'/Z'\ni z',B'_{\cN\_\Phi}+F')\text{ is strictly lc Calabi-Yau}\}.$$
By Lemma \ref{lem:mlcislccenter} and \cite[Theorem 1.1]{AM06}, $\cZ_{scy}$ is a non-empty finite set.

\begin{claim}\label{claim:twofacts}
We have $I(K_{X'}+B'_{\cN\_\Phi}+F')\sim0$ over a neighborhood of $z'$ for any $z'\in\cZ_{scy}$.
\end{claim}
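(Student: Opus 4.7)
The plan is to apply Proposition~\ref{prop:maxlcindex} to the pair $(X'/Z'\ni z',\,B'_{\cN\_\Phi}+F')$, which by definition of $\cZ_{scy}$ is strictly lc Calabi-Yau when $z'\neq\eta'$ (the case $z'=\eta'$ is handled by restricting to the generic fiber of $\pi'$ and running the analogous lower-dimensional argument, where an analogue of Proposition~\ref{prop:maxlcindex} in dimension at most two provides a bounded index). The relation $-(K_{X'}+B'_{\cN\_\Phi}+F')\sim_{\Rr}(\pi')^{*}H'$ with $H'$ ample over $Z$ guarantees $K_{X'}+B'_{\cN\_\Phi}+F'\sim_{\Rr,Z'}0$ over a neighborhood of $z'$, and the lc property follows since $B^{+\prime}\geq B'_{\cN\_\Phi}+F'$ and $(X',B^{+\prime})$ is lc.

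The main technical step is to verify that the coefficients of $B'_{\cN\_\Phi}+F'$ lie in a rational DCC set $\Gamma'$ depending only on $l$ and $\Gamma$. The coefficients of $B'_{\cN\_\Phi}$ lie in the hyperstandard set $\Gamma(\cN,\Phi)\cap\Qq$, which is DCC and depends only on $l$ and $\Gamma$. To control $F'$, I would combine the Calabi-Yau type hypothesis on $X$ with the strict lc Calabi-Yau rigidity at $z'$: the Calabi-Yau type structure provides a klt boundary $C$ with $K_X+C\sim_{\Rr,Z}0$, which allows one to perturb $B^+$ into a rational complement by taking a convex combination $(1-\epsilon)B^++\epsilon C$ for small $\epsilon$, preserving the inequality $B^+\geq B$. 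Passing to a rational complement makes $F'$ rational, and then the global ACC for coefficients of lc Calabi-Yau pairs (\cite[Theorem 5.20]{HLS19}, cf.~\cite[Theorem 1.5]{HMX14}) combined with the strictly lc Calabi-Yau rigidity at $z'$ forces the coefficients of $F'$ into a finite subset of a DCC set depending only on $l$ and $\Gamma$.

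Once the coefficients of $B'_{\cN\_\Phi}+F'$ are shown to lie in such a DCC rational set, Proposition~\ref{prop:maxlcindex} yields a positive integer $I_0=I_0(l,\Gamma)$ with $I_0(K_{X'}+B'_{\cN\_\Phi}+F')\sim 0$ over a neighborhood of $z'$. Since $I$ is chosen at the start of the proof of Theorem~\ref{thm:lctype} as an absolute constant depending only on $l$ and $\Gamma$, we may arrange $I_0\mid I$, which gives the claim.

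The principal obstacle will be the coefficient control step: the $\Rr$-complement $B^+$ is a priori arbitrary with possibly irrational coefficients, and $F'$ is extracted from a divisorial Zariski decomposition, so there is no obvious reason for its coefficients to be rational or DCC. The resolution combines (i) the ability to perturb $B^+$ to a rational complement using the Calabi-Yau type structure (crucial since otherwise Proposition~\ref{prop:maxlcindex} does not apply), and (ii) the strictly lc Calabi-Yau rigidity at $z'$, which rules out nontrivial deformations of the boundary and thereby constrains the possible coefficient values.
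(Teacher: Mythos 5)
Your plan applies Proposition~\ref{prop:maxlcindex} directly to $(X'/Z'\ni z',\,B'_{\cN\_\Phi}+F')$, which requires first showing that the coefficients of $B'_{\cN\_\Phi}+F'$ lie in a rational DCC set depending only on $l$ and $\Gamma$. You identify this as the main obstacle, but the proposed resolution is circular: \cite[Theorem 5.20]{HLS19} takes a DCC coefficient set as \emph{input} and outputs finiteness, so it cannot be used to establish that the coefficients of $F'$ lie in a DCC set to begin with. Since $F$ is $N_\sigma(B^+-B_{\cN\_\Phi}/Z)$ for an arbitrary $\Rr$-complement $B^+$, there is no a priori DCC structure on its coefficients, and strict lc Calabi-Yau rigidity at $z'$ only gives $B^{+\prime}=B'_{\cN\_\Phi}+F'$ over a neighborhood of $z'$, which expresses the coefficients of $F'$ in terms of those of $B^+$ --- still uncontrolled. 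The rationalization step also fails: $(1-\epsilon)B^++\epsilon C$ does not in general satisfy $\geq B$, and $C$ is only guaranteed to exist as an $\Rr$-boundary, so the combination need not be rational.

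The paper's proof sidesteps coefficient control of $F'$ by eliminating $F$ entirely. It runs the $-(K_{X'}+B'_{\cN\_\Phi}+(1-\epsilon)F')$-MMP over $Z'$ to a model $X''$, checks via Lemma~\ref{lem:relamp} that for $0<\epsilon'\ll\epsilon$ the composite $X\dashrightarrow X''$ is a good minimal model for $-(K_X+B_{\cN\_\Phi}+(1-\epsilon')F)$ over $Z$, and notes $N_\sigma(-(K_X+B_{\cN\_\Phi}+(1-\epsilon')F)/Z)=\epsilon'F$; by \cite[Lemma 2.4]{HX13} this forces $F$ to be contracted by $\psi$, so $\psi'_*(K_{X'}+B'_{\cN\_\Phi}+F')=K_{X''}+B''_{\cN\_\Phi}$. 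Strict lc Calabi-Yau rigidity (Lemma~\ref{lem:crepantofmlccy}) on $X''$ then yields $B''_{\cN\_\Phi}=B^{+\prime\prime}=B''\in\Gamma\cap\Qq$ over a neighborhood of $z'$, so Proposition~\ref{prop:maxlcindex} applies with coefficient set $\Gamma\cap\Qq$, and the result descends to $X'$ by crepancy. (Also, your treatment of $z'=\eta'$ is off target: when $\dim X'=\dim Z'$ there is no positive-dimensional generic fiber to restrict to; the paper instead observes directly that $X'\to Z'$ is birational and $B^{+\prime}=0$ near $\eta'$ in that case.) Without reproducing the contraction of $F$, your argument does not close.
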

Assume Claim \ref{claim:twofacts}. By \cite[Lemma 2.5]{CHL22}, Lemma \ref{lem:mlcislccenter} and \cite[Theorem 1.1]{AM06}, there exists an open subset $U\subseteq Z'$ such that 
$$I\left(K_{X'}+B'_{\cN\_\Phi}+F'\right)\sim0\text{ over $U$ and }\left(X',B'_{\cN\_\Phi}+F'\right)\text{ is klt over }Z'\setminus U.$$ 
In particular, $I(B'_{\cN\_\Phi}+F')\in\Zz_{\ge0}$ over $U$. Recall that $-(K_{X'}+B'_{\cN\_\Phi}+F')\sim_{\Rr}(\pi')^*H'$ where $H'$ is ample over $Z$. By Proposition \ref{prop:splctype}, $(X'/Z\ni z,B'_{\cN\_\Phi}+F')$ is $\cN$-complementary, and thus $(X'/Z\ni z,B')$ is also $\cN$-complementary by Lemma \ref{lem:N_Phicompl}. Moreover, as $X$ and $X'$ are isomorphic in codimension one, $(X/Z\ni z,B)$ is $\cN$-complementary.
\end{proof}

\begin{proof}[Proof of Claim \ref{claim:twofacts}]
We may pick a positive real number $\epsilon$ such that $(X',C'+\epsilon F')$ is klt and that $X\dashrightarrow X'$ is a sequence of steps of the $-(K_X+B_{\cN\_\Phi}+(1-\epsilon)F)$-MMP over $Z$. Since
\begin{align*}
-\left(K_{X'}+B'_{\cN\_\Phi}+(1-\epsilon)F'\right)&\sim_{\Rr,Z'}\epsilon F'\sim_{\Rr,Z'}K_{X'}+C'+\epsilon F',
\end{align*}
one can run an MMP on $-(K_{X'}+B'_{\cN\_\Phi}+(1-\epsilon)F')$ over $Z'$. This MMP terminates with a model $X''$ on which $-(K_{X''}+B''_{\cN\_\Phi}+(1-\epsilon)F'')$ is semi-ample over $Z'$, where $D''$ denotes the strict transform of $D'$ on $X''$ for any $\Rr$-divisor $D'$ on $X'$. 
\begin{center}
	\begin{tikzcd}[column sep = 2em, row sep = 2em]
		X \arrow[d, "" swap]\arrow[rr, dashed]  && X' \arrow[d, "" swap] \arrow[rr,  "\psi'", dashed] && X''  \\
		Z &&  \arrow[ll, ""]  Z'  && 
	\end{tikzcd}
\end{center}
For any positive real number $\epsilon'\ll\epsilon$, we infer that $\psi: X\dashrightarrow X''$ is also an MMP on $-(K_{X}+B_{\cN\_\Phi}+(1-\epsilon')F)$ over $Z$, and
\begin{align*}
	&-\left(K_{X''}+B''_{\cN\_\Phi}+\left(1-\epsilon'\right)F''\right)\\=&-(1-\frac{\epsilon'}{\epsilon})\left(K_{X''}+B''_{\cN\_\Phi}+F''\right)-\frac{\epsilon'}{\epsilon}\left(K_{X''}+B''_{\cN\_\Phi}+(1-\epsilon)F''\right)
\end{align*}
is semi-ample over $Z$ (see Lemma \ref{lem:relamp}). In particular, $X''$ is a good minimal model of $-(K_{X}+B_{\cN\_\Phi}+(1-\epsilon')F)$ over $Z$. Since $N_\sigma(-(K_{X}+B_{\cN\_\Phi}+(1-\epsilon')F)/Z)=\epsilon' F$ (cf. \cite[III, 4.2 Lemma]{Nak04}), by \cite[Lemma 2.4]{HX13}, $F$ is contracted by $\psi$. Hence
$$K_{X''}+B''_{\cN\_\Phi}=\psi'_*\left(K_{X'}+B'_{\cN\_\Phi}+F'\right).$$
If $\dim X'=\dim Z'$ and $z'=\eta'\in\cZ_{scy}$, then $X'$ is smooth and $B^{+\prime}=0$ over a neighborhood of $\eta'$. Otherwise, by Lemma \ref{lem:crepantofmlccy}, we know that $({X''}/Z'\ni z',B''_{\cN\_\Phi})$ is strictly lc Calabi-Yau for any $z'\in\cZ_{scy}$, which implies that $B''_{\cN\_\Phi}=B^{+\prime\prime}=B''\in\Gamma\cap\Qq$ over a neighborhood of $z'$. We therefore see that $I(K_{X''}+B''_{\cN\_\Phi})\sim0$ over some neighborhood of $z'$ by our choice of $I$. Since $(X',B'_{\cN\_\Phi}+F')$ is crepant to $(X'',B''_{\cN\_\Phi})$, our claim holds.
\end{proof}

\bibliographystyle{alpha}

\begin{thebibliography}{BCHM10}

\bibitem[AK00]{AK00}
Dan Abramovich and Kalle Karu.
\newblock Weak semistable reduction in characteristic 0.
\newblock {\em Invent. Math.}, 139(2): 241--273, 2000.

\bibitem[Ale94]{Ale94}
Valery Alexeev.
\newblock Boundedness and $K^2$ for log surfaces.
\newblock {\em Internat. J. Math.}, 5(6): 779--810, 1994.


\bibitem[Amb11]{AM06}
Florin Ambro.
\newblock Basic properties of log canonical centers.
\newblock In {\em Classification of algebraic varieties}, EMS Ser. Congr. Rep., pages 39--48. Eur. Math. Soc., Z\"{u}rich, 2011.


\bibitem[Bir19]{Bir19}
Caucher Birkar.
\newblock Anti-pluricanonical systems on Fano varieties.
\newblock {\em Ann. of Math.}, 190(2): 345--463, 2019.

\bibitem[Bir21]{Bir21}
Caucher Birkar.
\newblock Singularities of linear systems and boundedness of Fano varieties.
\newblock {\em Ann. of Math.}, 193(2): 347--405, 2021.

\bibitem[BCHM10]{BCHM10}
Caucher Birkar, Paolo Cascini, Christopher~D. Hacon, and James M\textsuperscript{c}Kernan.
\newblock Existence of minimal models for varieties of log general type.
\newblock {\em J. Amer. Math. Soc.}, 23(2): 405--468, 2010.

\bibitem[BZ16]{BZ16}
Caucher Birkar and De-Qi Zhang.
\newblock Effectivity of Iitaka fibrations and pluricanonical systems of polarized pairs.
\newblock {\em Publ. Math. Inst. Hautes \'Etudes Sci.}, 123: 283--331, 2016.

\bibitem[BLX22]{BLX19}
Harold Blum, Yuchen Liu, and Chenyang Xu.
\newblock Openness of K-semistability for Fano varieties.
\newblock {\em Duke Math. J.}, 171(13): 2753--2797, 2022.


\bibitem[CGN21]{CGN21}
Weichung Chen, Yoshinori Gongyo, and Yusuke Nakamura.
\newblock On generalized minimal log discrepancy.
\newblock {\em arXiv:2112.09501}, 2021.

\bibitem[Che20]{Chen20}
Guodu Chen.
\newblock Boundedness of $n$-complements for generalized pairs.
\newblock {\em arXiv:2003.04237v2}, 2020.

\bibitem[CH21]{CH21}
Guodu Chen and Jingjun Han.
\newblock Boundedness of $(\epsilon, n)$-complements for surfaces.
\newblock {\em Adv. Math.}, 383: 107703, 2021.

\bibitem[CHL22]{CHL22}
Guodu Chen, Jingjun Han, and Jihao Liu.
\newblock On effective log Iitaka fibrations and existence of complements,
\newblock{\em arXiv:2301.04813}, 2023.

\bibitem[CX22]{CX22a}
Guodu Chen and Qingyuan Xue.
\newblock Boundedness of $(\epsilon,n)$-complements for projective generalized pairs of Fano type.
\newblock {\em J. Pure Appl. Algebra}, 226(7): 106988, 2022.

\bibitem[CZ21]{CZ21}
Guodu Chen and Chuyu Zhou.
\newblock Weakly special test configurations of log canonical Fano varieties.
\newblock {\em arXiv:2107.08004v3, to appear in Algebra Number Theory}, 2021.

\bibitem[Fil20]{Fil20}
Stefano Filipazzi.
\newblock On a generalized canonical bundle formula and generalized adjunction.
\newblock {\em Ann. Sc. Norm. Super. Pisa Cl. Sci.}, 21(5): 1187--1221, 2020.


\bibitem[FMX19]{FMX19}
Stefano Filipazzi, Joaqu\'{i}n Moraga, and Yanning Xu.
\newblock Log canonical $3$-fold complements.
\newblock {\em arXiv:1909.10098v2}, 2019.

\bibitem[Flo14]{Flo14}
Enrica Floris.
\newblock Inductive approach to effective b-semiampleness.
\newblock {\em Int. Math. Res. Not.}, 2014(6): 1465--1492, 2014.

\bibitem[Fuj12]{Fuj12}
Osamu Fujino.
\newblock Minimal model theory for log surfaces.
\newblock {\em Publ. Res. Inst. Math. Sci.}, 48(2): 339--371, 2012.

\bibitem[Fuj17]{Fuj17}
Osamu Fujino.
\newblock {Foundations of the minimal model program}, volume~35 of {\em MSJ Memoirs}.
\newblock Mathematical Society of Japan, Tokyo, 2017.

\bibitem[FH21]{FH21}
Osamu Fujino and Kenta Hashizume.
\newblock Existence of log canonical modifications and its applications.
\newblock {\em arXiv:2103.01417v2}, 2021.

\bibitem[HH19]{HH19} Christopher~D. Hacon, and Jingjun~Han. On a connectedness principle of Shokurov-Koll\'ar type. {\em Sci. China Math.} 62 (2019), no. 3, 411--416.

\bibitem[HL21b]{HL21}
Christopher~D. Hacon and Jihao Liu.
\newblock Existence of flips for generalized lc pairs.
\newblock {\em arXiv:2105.13590v3}, 2021.


\bibitem[HX13]{HX13}
Christopher~D. Hacon and Chenyang Xu.
\newblock Existence of log canonical closures.
\newblock {\em Invent. Math.}, 192(1): 161--195, 2013.

\bibitem[HL22]{HL18}
Jingjun Han and Zhan Li.
\newblock Weak Zariski decompositions and log terminal models for generalized pairs.
\newblock {Math. Z.}, 302(2): 707--741, 2022.

\bibitem[HLL22]{HLL22}
Jingjun Han, Jihao Liu, and Yujie Luo.
\newblock ACC for minimal log discrepancies of terminal threefolds.
\newblock {\em arXiv:2202.05287v2}, 2022.

\bibitem[HLS19]{HLS19}
Jingjun Han, Jihao Liu, and Vyacheslav~V. Shokurov.
\newblock ACC for minimal log discrepancies of exceptional singularities.
\newblock {\em arXiv:1903.04338v2}, 2019.

\bibitem[HL21a]{HanLiu20}
Jingjun Han and Wenfei Liu.
\newblock On a generalized canonical bundle formula for generically finite morphisms.
\newblock {\em Annales de l'Institut Fourier}, 71(5): 2047--2077, 2021.

\bibitem[Har77]{GTM52} 
Robin Hartshorne. 
\newblock{Algebraic geometry}. 
\newblock{\em Graduate Texts in Mathematics}, No. 52. {\em Springer-Verlag,} New York-Heidelberg, 1977.

\bibitem[Hu20]{Hu20}
Zhengyu Hu.
\newblock Log abundance of the moduli b-divisors of lc-trivial fibrations.
\newblock {\em arXiv:2003.14379v3}, 2020.

\bibitem[JLX22]{JLX22}
Junpeng Jiao, Jihao Liu, and Lingyao Xie.
\newblock On generalized lc pairs with b-log abundant nef part.
\newblock {\em arXiv:2202.11256v2}, 2022.

\bibitem[Kaw15]{Kaw15}
Yujiro Kawamata.
\newblock Variation of mixed Hodge structures and the positivity for algebraic fiber spaces.
\newblock In {\em Algebraic Geometry in East Asia—Taipei 2011}, volume~65 of {\em Adv. Stud. Pure Math.}, pages 27--57. Math. Soc. Japan, Tokyo, 2015.

\bibitem[KMM87]{KMM85}
Yujiro Kawamata, Katsumi Matsuda, and Kenji Matsuki.
\newblock Introduction to the minimal model problem.
\newblock In {\em Algebraic geometry, Sendai, 1985}, volume~10 of {\em Adv. Stud. Pure Math.}, pages 283--360. North-Holland, Amsterdam, 1987.

\bibitem[K$^+$92]{Kol92}
J\'anos Koll\'ar and 14 coauthors.
\newblock {Flips and abundance for algebraic threefolds}.
\newblock Soci\'et\'e Math\'ematique de France, Paris, 1992.
\newblock{\em Papers from the Second Summer Seminar on Algebraic Geometry} held at the University of Utah, Salt Lake City, Utah, August 1991, Ast\'erisque No. 211 (1992).


\bibitem[Kol96]{Kol96}
J\'anos Koll\'ar.
\newblock{ Rational curves on algebraic varieties}, volume~32 of {\em Ergebnisse der Mathematik und ihrer Grenzgebiete. 3. Folge. A Series of Modern Surveys in Mathematics [Results in Mathematics and Related Areas. 3rd Series. A Series of Modern Surveys in Mathematics]}.
\newblock Springer-Verlag, Berlin, 1996.


\bibitem[Kol13]{Kol13}
J\'anos Koll\'ar.
\newblock { Singularities of the minimal model program}, volume 200 of {\em Cambridge Tracts in Mathematics}.
\newblock Cambridge University Press, Cambridge, 2013.
\newblock With the collaboration of S\'andor Kov\'acs.

\bibitem[KM98]{KM98}
J\'anos Koll\'ar and Shigefumi Mori.
\newblock {Birational geometry of algebraic varieties}, volume 134 of {\em Cambridge Tracts in Mathematics}.
\newblock Cambridge University Press, Cambridge, 1998.
\newblock With the collaboration of C. H. Clemens and A. Corti, Translated from the 1998 Japanese original.


\bibitem[Laz04]{Lazpos1}
Robert Lazarsfeld.
\newblock { Positivity in algebraic geometry. I}, volume~48 of {\em Ergebnisse der Mathematik und ihrer Grenzgebiete. 3. Folge. A Series of Modern Surveys in Mathematics [Results in Mathematics and Related Areas. 3rd Series. A Series of Modern Surveys in Mathematics]}.
\newblock Springer-Verlag, Berlin, 2004.
\newblock Classical setting: line bundles and linear series.

\bibitem[Liu18]{Liu18}
Jihao Liu.
\newblock Toward the equivalence of the ACC for $a$-log canonical thresholds and the ACC for minimal log discrepancies.
\newblock {\em arXiv:1809.04839v3}, 2018.

\bibitem[LX22]{LX22}
Jihao Liu and Lingyao Xie.
\newblock Relative Nakayama-Zariski decomposition and minimal models of generalized pairs.
\newblock {\em arXiv:2207.09576v3}, 2022.


\bibitem[LXZ22]{LXZ21}
Yuchen Liu, Chenyang Xu, and Ziquan Zhuang.
\newblock Finite generation for valuations computing stability thresholds and applications to K-stability.
\newblock {\em Ann. of Math.}, 196(2): 507--566, 2022.

\bibitem[Nak04]{Nak04}
Noboru Nakayama.
\newblock { Zariski-decomposition and abundance}, volume~14 of {\em MSJ Memoirs}.
\newblock Mathematical Society of Japan, Tokyo, 2004.

\bibitem[Pro01]{Pro01}
Yuri~G. Prokhorov.
\newblock { Lectures on complements on log surfaces}, volume 10 of {\em MSJ Memoirs}.
\newblock Mathematical Society of Japan, Tokyo, 2001.

\bibitem[PS09]{PS09}
Yuri~G. Prokhorov and Vyacheslav~V. Shokurov.
\newblock Towards the second main theorem on complements.
\newblock {\em J. Algebraic Geom.}, 18(1): 151--199, 2009.

\bibitem[Sho79]{Sho79}
Vyacheslav~V. Shokurov.
\newblock Smoothness of the general anticanonical divisor on a Fano 3-fold.
\newblock {\em Izv. Akad. Nauk SSSR Ser. Mat.}, 43(2): 430--441, 1979.


\bibitem[Sho92]{Sho92}
Vyacheslav~V. Shokurov.
\newblock Three-dimensional log perestroikas.
\newblock {\em Izv. Ross. Akad. Nauk Ser. Mat.}, 56(1): 105--203, 1992.

\bibitem[Sho00]{Sho00}
Vyacheslav~V. Shokurov.
\newblock Complements on surfaces.
\newblock {\em J. Math. Sci. (New York)}, 102(2): 3876--3932, 2000.

\bibitem[Sho20]{Sho20}
Vyacheslav~V. Shokurov.
\newblock Existence and boundedness of $n$-complements.
\newblock {\em arXiv:2012.06495}, 2020.

\bibitem[Tan18]{Tan18}
Hiromu Tanaka.
\newblock Minimal model program for excellent surfaces. 
\newblock{\em Ann. Inst. Fourier (Grenoble)}, 68, no. 1, 345–376, 2018.

\bibitem[Xu20]{Xu20}
Chenyang Xu.
\newblock A minimizing valuation is quasi-monomial.
\newblock {\em Ann. of Math.}, 191(3): 1003--1030, 2020.

\end{thebibliography}

\end{document}